\definecolor{algoColorKeyword}{named}{cyan}
\definecolor{algoColorComment}{named}{olive}
\let\todon\todo
\renewcommand{\todo}[1]{\todon[inline,color=green!40]{\color{magenta}{#1}}}
\setlist[enumerate]{leftmargin=.5in}
\setlist[itemize]{leftmargin=.5in}
\newcommand{\evaluatedAt}[1]{\,\raisebox{-.5em}{$\vert_{#1}$}}
\newtheorem{theorem}{Theorem}[section]
\newtheorem{lemma}[theorem]{Lemma}
\newtheorem{proposition}[theorem]{Proposition}
\newtheorem{corollary}[theorem]{Corollary}
\newtheorem{remark}[theorem]{Remark}
\newtheorem{example}[theorem]{Example}
\theoremstyle{definition}
\newtheorem{definition}[theorem]{Definition}
\def\letter#1{\mathtt{#1}}
\def\llangle{\mathopen{\langle\mkern-3.5mu\langle}}
\def\rrangle{\mathclose{\rangle\mkern-3.5mu\rangle}}
\def\lbracket{\mathopen{[\mkern-3mu[}}
\def\rbracket{\mathclose{]\mkern-3mu]}}
\DeclareMathOperator{\id}{id}
\newcommand*{\R}{{\mathbb R}}
\newcommand\N{\mathbb N}
\newcommand\zero{\boldsymbol{0}}
\newcommand\one{\boldsymbol{1}}
\newcommand\ISS{\operatorname{ISS}}
\renewcommand\mp{{ \scalebox{0.4}{$\operatorname{min+}$} }}
\newcommand\maxp{{ \scalebox{0.4}{$\operatorname{max+}$} }}
\newcommand\TSS{\operatorname{ISS}^\mathbb{S}}
\newcommand\w[1]{\letter{#1}}
\newcommand\Z{{\mathbb{Z}}}
\renewcommand\S{{\mathbb{S}}}
\newcommand\s{{\scalebox{0.4}{$\mathbb{S}$}}}
\newcommand\y{{\scalebox{0.4}{$\mathbb{A}$}}}
\newcommand\insertZero{\mathsf{zero}}
\newcommand\bigopluss{\sideset{}{_\s}\bigoplus}
\newcommand\bigodotss{\sideset{}{_\s}\bigodot}
\newcommand\odots{\odot_\s}
\newcommand\opluss{\oplus_\s}
\newcommand\qs\star
\newcommand\emptyWord{\mathbf{e}}
\renewcommand\O{\mathcal{O}}
\newcommand\len{\ell}
\begin{document}

\title{Tropical time series, iterated-sums signatures and quasisymmetric functions}

\author{Joscha Diehl\thanks{Universit\"at Greifswald, Institut f\"ur Mathematik und Informatik, Walther-Rathenau-Str.~47, 17489 Greifswald, Germany.},
Kurusch Ebrahimi-Fard \thanks{Department of Mathematical Sciences, NTNU, 7491 Trondheim, Norway.},
Nikolas Tapia \thanks{Weierstra{\ss}-Institut Berlin, Mohrenstr.~39, 10117 Berlin, Germany\newline Technische Universität Berlin, Str. des 17.~Juni 136, 10623 Berlin, Germany.}}



\maketitle

\begin{abstract}
  Aiming for a systematic feature-extraction from time series,
  we introduce the iterated-sums signature over arbitrary commutative semirings.
  The case of the tropical semiring is a central, and our motivating example.
  It leads to features of (real-valued) time series that are
  not easily available using existing signature-type objects.
  We demonstrate how the signature
  extracts chronological aspects of a time series,
  and that its calculation is possible in linear time.
  We identify quasisymmetric expressions
  over semirings as the appropriate framework for iterated-sums signatures over semiring-valued time series.  
\end{abstract}

\tableofcontents

\newpage
%
%
%
%
%


\section{Introduction}
\label{sec:intro}

Recent developments \cite{DET2020,KBP+2019, KO2019,LLY+2019,toth2020seq2tens} have shown that various forms of \emph{iterated-sum} and \emph{iterated-integral} operations can form a useful component in machine learning pipelines for sequential data.
Originating from the study of (discretized) controlled ordinary differential equations (ODEs) \cite{fliess1981fonctionnelles,Lyons1998}, they are particularly apt to model input-output relations that are well-approximated by dynamical systems \cite{DGE2016,Gray2017}.
In fact, the \emph{iterated-integrals signature} $\mathrm{IIS}(x)$%
\footnote{Also just called the \emph{signature} and denoted with $S(X)$ or $\sigma(X)$.}
of a (smooth enough) multidimensional curve $x = (x^{(1)},\dots,x^{(d)})\colon [0,T] \to \R^d$,
is the solution to a certain \emph{universal} controlled ODE \cite[Proposition 7.8]{FrizVic2010}.
It is universal in the sense
that the solution to \emph{any} other controlled ODE can be well-approximated by a linear expression of the
iterated-integrals signature.
On a more concrete level,
the entries of the ``classical'' iterated-integrals signature 
are real numbers, indexed by words $w=w_1 \dotsm w_k$ in the alphabet $A^{\prime}=\{1,\dotsc,d \}$,
and given as follows
\begin{align}
\label{Ssu}
  \Big\langle \mathrm{IIS}(x)_{0,T}, w \Big\rangle 
  &= \int_{0 \le t_1 \le t_2 \le \cdots \le t_k \leq T}\mathrm dx_{t_1}^{(w_1)} \cdots \mathrm dx_{t_k}^{(w_k)} \\
  &= \int_{0 \le t_1 \le t_2 \le \cdots \le t_k \leq T} \dot x_{t_1}^{(w_1)} \cdots \dot x_{t_k}^{(w_k)}\,\mathrm dt_1 \cdots \mathrm dt_k.\notag
\end{align}
Here and throughout the rest of the article, we use subscript notation to denote the value at (continuous or discrete) time \(t\).

Not all input-output relations are well-modeled by controlled ODEs, though.
As an extreme example, we mention that a controlled ODE does not care about so-called ``tree-like'' excursions
of the driving signal \cite{HL2010}.
The iterated-integrals signature can therefore, for example, not distinguish the following
two one-dimensional curves, $t \in [0,1]$,
\begin{align*}
  t \mapsto 0 \qquad \text{ and } \qquad t \mapsto \sin( 2 \pi t ).
\end{align*}
There are several ways to circumvent this particular problem, e.g. ``lifting'' a one-dimensional curve
to a two-dimensional curve \cite{FHL2016}.
The \emph{iterated-sums signature} (ISS) introduced in \cite{DET2020} (see also \cite{KO2019,toth2020seq2tens}) forgoes this particular problem altogether
and brings the added benefit of working directly with discrete time series
(in order to apply the theory of iterated \emph{integrals} to discrete-time sequential data,
it has to be interpolated to a, say, piecewise linear curve).

But even the ISS cannot ``see'' all aspects of a time series.
Indeed, the ISS is invariant to time warping and hence cannot distinguish
time series run at different speeds (see \Cref{lem:idempotentTimeWarping} for a proper definition).
It turns out that such invariance is often desirable.
The search for such invariants was in fact the starting point of \cite{DET2020},
where it is shown that the ISS contains all polynomial expressions in the time series entries that are invariant to time warping.
There are, however, non-polynomial time-warping invariant functionals as the following example shows (this is well-defined for any time series $z$ that is eventually constant):
\begin{align}
  \label{eq:min}
  (z_1,z_2,\ldots) \mapsto \min_{j} z_j.
\end{align}
Such expressions are ubiquitous in financial mathematics, in particular the pricing of American options \cite{myneni1992pricing}.
Moreover, it is folklore knowledge that such expressions are poorly approximated by polynomials.
This is related to the fact that this functional is \emph{not} well-approximated by (discretized) ODEs.
\begin{quote}
  \emph{The aim of the work at hand is to introduce a systematic feature-extraction method for time series
  that encompasses functionals as the one in \cref{eq:min}.}
\end{quote}

The entry point for our investigation is the observation
that \eqref{eq:min} can be considered as a polynomial expression if one changes the underlying
field of the reals to the tropical (or min-plus) semiring.
This, as well as other semirings, have (a subset of) the reals as the underlying set,
and only the operations of ``addition'' and ``multiplication'' have a different meaning.
As a result, this opens ways to consider real-valued time series under many different lenses.
In particular, it allows us to consider \eqref{eq:min} as part of an iterated-sums signature.
Moreover, semirings whose underlying sets are not given by subsets of the real line enable one to look at time series with values in more general spaces.

After this general motivation, we now present two ways to 
naturally arrive at the signature we introduce in this work,
where the first one makes the remarks above, in particular those concerning the invariant in \eqref{eq:min}, more concrete.
\begin{enumerate}
  \item \textbf{Invariants of a time series.}
Accommodating the discrete nature of the time series $x=(x_1, x_2,\dots, x_N)$, $x_i \in \R^d$, one may consider the discrete analog of \eqref{Ssu}, i.e., the so-called iterated-sums signature $\ISS$ over the reals is defined in terms of the increments $\delta x_i=x_i - x_{i-1}$
\begin{align}
\label{ISSpq}
\Big\langle \ISS(x)_{0,N}, w \Big\rangle = \sum_{0 < i_1 <i_2 <\cdots < i_k \leq N} (\delta x_{i_1})^{w_1} \cdots (\delta x_{i_k})^{w_k}.
\end{align}
Here $w=w_1 \cdots w_k$ is a word over a certain alphabet -- larger than $A'$ from above -- which is adapted to the discrete nature of the summation operation. It was shown in \cite{DET2020} that the map $\ISS(x)$ stores all polynomial invariants to time warping (and translation).

Alternatively, the $\ISS$ can be defined as
\begin{align*}
  \Big\langle \ISS(z)_{p,q}, w \Big\rangle = \sum_{p < i_1 <i_2 <\cdots < i_k \leq q} (z_{i_1})^{w_1} \cdots (z_{i_k})^{w_k},
\end{align*}
and the former definition is obtained by evaluating at increments, $z_i=\delta x_i$.
The latter definition yields an object that is invariant to insertion of $0 \in \R$ into the time series $z=(z_1,z_2,\ldots)$,
and it is this viewpoint that generalizes to arbitrary semirings.
More precisely, in \Cref{sec:tss} we construct a signature over commutative semirings
that is invariant to the insertion of zeros of the semiring.
The underlying mathematical object are quasisymmetric expressions over commutative semirings,
which, to the best of our knowledge, we introduce for the first time in  \Cref{sec:qsym}.
The way back from invariants to inserting zeros to time-warping invariants is not as straightforward
as in the case when considering computations over a field, and we investigate it in Section  \ref{sec:timeWarpingInvariants}.
As we will see, expressions like \eqref{eq:min} will be covered by the resulting theory.

\item \textbf{Cheap chronological information of a time series.}

The importance of convolutional neural networks (CNNs) is hard to overestimate \cite{krizhevsky2012imagenet}.
Their success, in particular in image recognition, is usually attributed to two ingredients:
\begin{enumerate}

  \item
    \emph{weight sharing} which reduces, in comparison to fully connected networks, the amount
    of parameters and hence allows for deeper architectures, and

  \item 
    convolution and its particular \emph{structure}  (usually combined with max-pooling)
    leads to desirable properties with respect to image recognition
    (modeling of receptive fields, approximate translation invariance, etc.).

\end{enumerate}

Although CNNs have been successfully applied in the context of time series data (see \cite{bai2018empirical,FFW+2019} for recent surveys), this does not seem to be based on the inherent properties of sequential data.
In particular, the \emph{structure} of time series is very different from that of images. It is not clear why the receptive-field structure of CNNs captures intrinsically meaningful information of sequential data. Moreover, time series possess a characteristic that images do not: a \emph{chronology}, that is, the values of the series are \emph{ordered} according to the time index.
We explain this using an example.

\begin{example}
  \label{ex:toyExample}
  We consider a very concrete toy example.
  Let the input $x \in \{2,4,8,16\}^4$ consist in sequences of length four in the numbers $2,4,8,16$.
  On this input space we consider a convolutional layer with kernel-size $2$, stride $1$, followed
  by a max-pool with kernel-size $4$.
  For example
  \begin{align*}
    \begin{bmatrix}
      {2} & {4} & {4} & {16}
    \end{bmatrix}
    \mapsto
    \max \{ a_1\cdot 2 + a_2\cdot 4, a_1\cdot 4 + a_2\cdot 4, a_1\cdot 4 + a_2\cdot 16 \},
  \end{align*}
  where $a_1,a_2 \in \R$ are the parameters of the convolutional kernel.
  This network can learn to answer questions of the following type.
  \begin{itemize}
    \item Is there a $16$ in the sequence \emph{somewhere}? (Just as a full-blown CNN on image data can answer the question: Is there a dog \emph{somewhere} in the picture?)
    \item Is there a $2$ \emph{directly} followed by a $16$ somewhere in the sequence?
      (indeed, with $a_1=-1, a_2=1$ one gets output $14$ if and only if the statement is true)
  \end{itemize}

  However, it can \emph{not} answer the question
  \begin{itemize}
    \item Is there a $2$ somewhere and then, \emph{sometime after}, a $16$ in the sequence?
  \end{itemize}

  We believe that \emph{chronological questions} of this type are the relevant questions for time series.
  Note that the following architecture allows us to answer this question
  \begin{align*}
    \begin{bmatrix}
      x_1 & x_2 & x_3 & x_4
    \end{bmatrix}
    \mapsto
    \max \{ a_1\cdot x_{i_1} + a_2\cdot x_{i_2} : i_1 < i_2 \}.
  \end{align*}
  Indeed, again with $a_1=-1, a_2=1$, the output is $14$ if and only if the question is answered positively.
\end{example}

\begin{example}
  The preceeding example was \emph{strictly for illustrative purposes}
  and the chronological question posed there can be answered by other means
  (for example a simple inite state automaton).
  To see how its \emph{conceptual idea} can be applied in a deep learning pipeline though,
  we have set up the github repository \url{https://github.com/diehlj/tropical-time-series},
  which implements the following binary classification problem.

  One sample $x$ is generated as follows (we have chosen the 'pattern' $[2,-3,16]$ of length three now).
  \begin{itemize}
    \item Let $\xi_i \sim N(0,\sigma^2), i=1, \dots, T$ are i.i.d. random variables (``white noise''),
      for a fixed variance $\sigma^2 > 0$.

    \item Draw random, ordered time-points $1 \le t_1 < t_2 < t_3 \le T$.

    \item Set
      \begin{align*}
        x_i
        = 
        \begin{cases}
          \xi_i +  2 & i = t_{\tau(1)} \\
          \xi_i -  3 & i = t_{\tau(2)} \\
          \xi_i + 16 & i = t_{\tau(3)} \\
          \xi_i      & \text{ else}.
        \end{cases}
      \end{align*}
  \end{itemize}
  Here $\tau \in S_3$ is the identity for samples of Class $A$,
  and it is drawn randomly from $S_3 \setminus \{\id_{S_3}\}$ for samples of Class $B$.
  The task is to learn distinguishing the two classes.
  This is again a ``chronological question'', as in the preceeding example.
  We test two different architectures
  \begin{itemize}
    \item FCN: a fully connected network of depth with linear readout. Trainable parameters: $5762$ (for $T=100$). 
    \item ISS: a layer inspired by the current work,
      given by
      \begin{align*}
        \max_{1 \le i_1 < i_2 < i_3 \le T} \left( f^\theta_1(x_{i_1}) + f^\theta_2(x_{i_2}) + f^\theta_3(x_{i_3}) \right),
      \end{align*}
      where $f^\theta_j$ are learnable functions (neural networks themselves, in our implementation),
      followed by a linear ``readout'' layer.
      Trainable parameters: $342$ (for any $T$).
  \end{itemize}

  The results in \Cref{fig:fcniss} show that the FCN (wth a much larger number of parameters),
  does consistently worse than the ISS.
  The inductive bias of ``chronological question'', in this simple example,
  seems to help our architecture, for small noise, to achieve perfect classification.

  \begin{figure}[!ht]
      \centering
      \begin{minipage}{0.45\textwidth}
      \centering
      \includegraphics[width=0.95\textwidth, clip, trim=30 0 40 0]{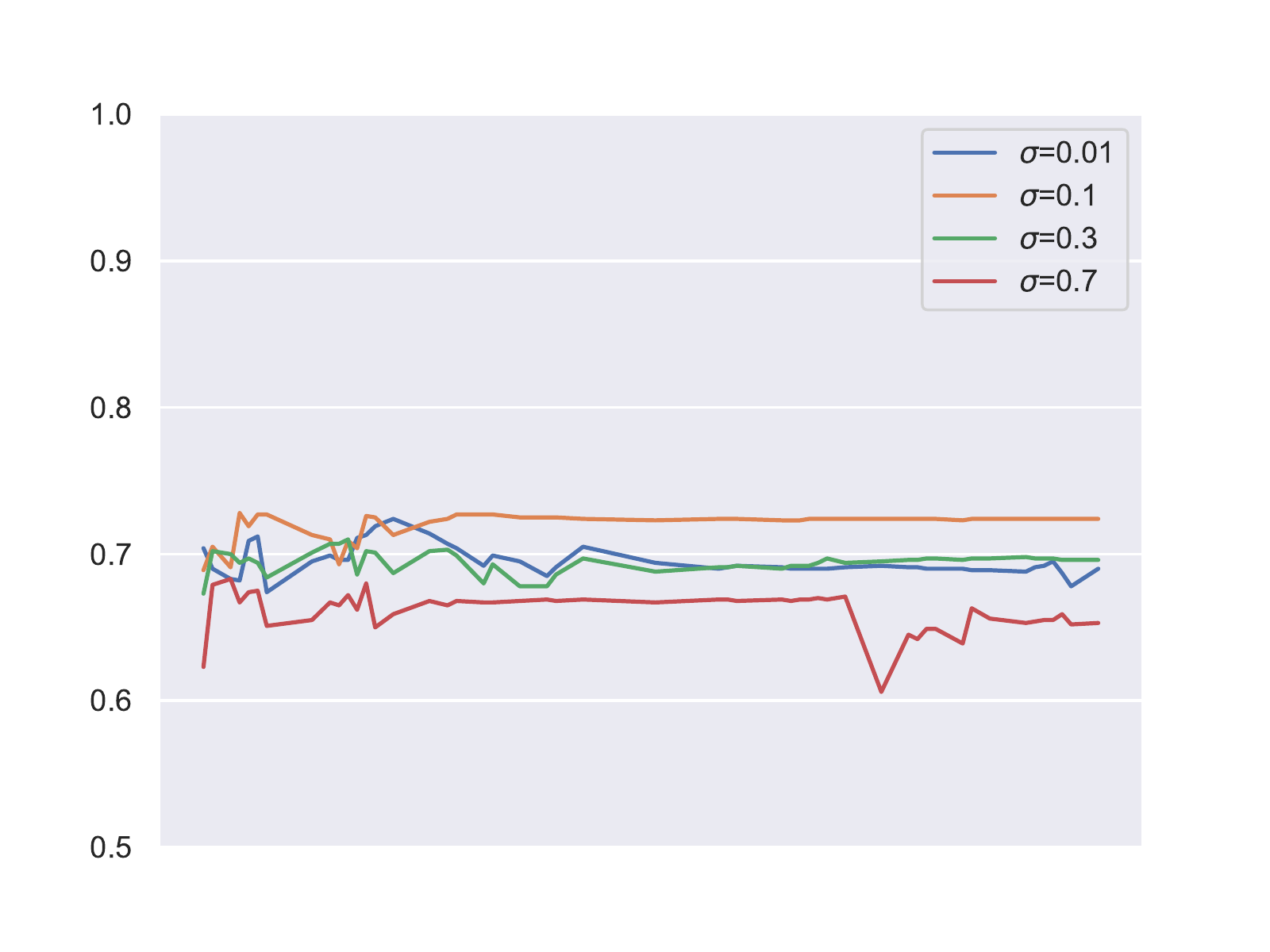}
      \end{minipage}
      \begin{minipage}{0.45\textwidth}
      \centering
      \includegraphics[width=0.95\textwidth, clip, trim=30 0 40 0]{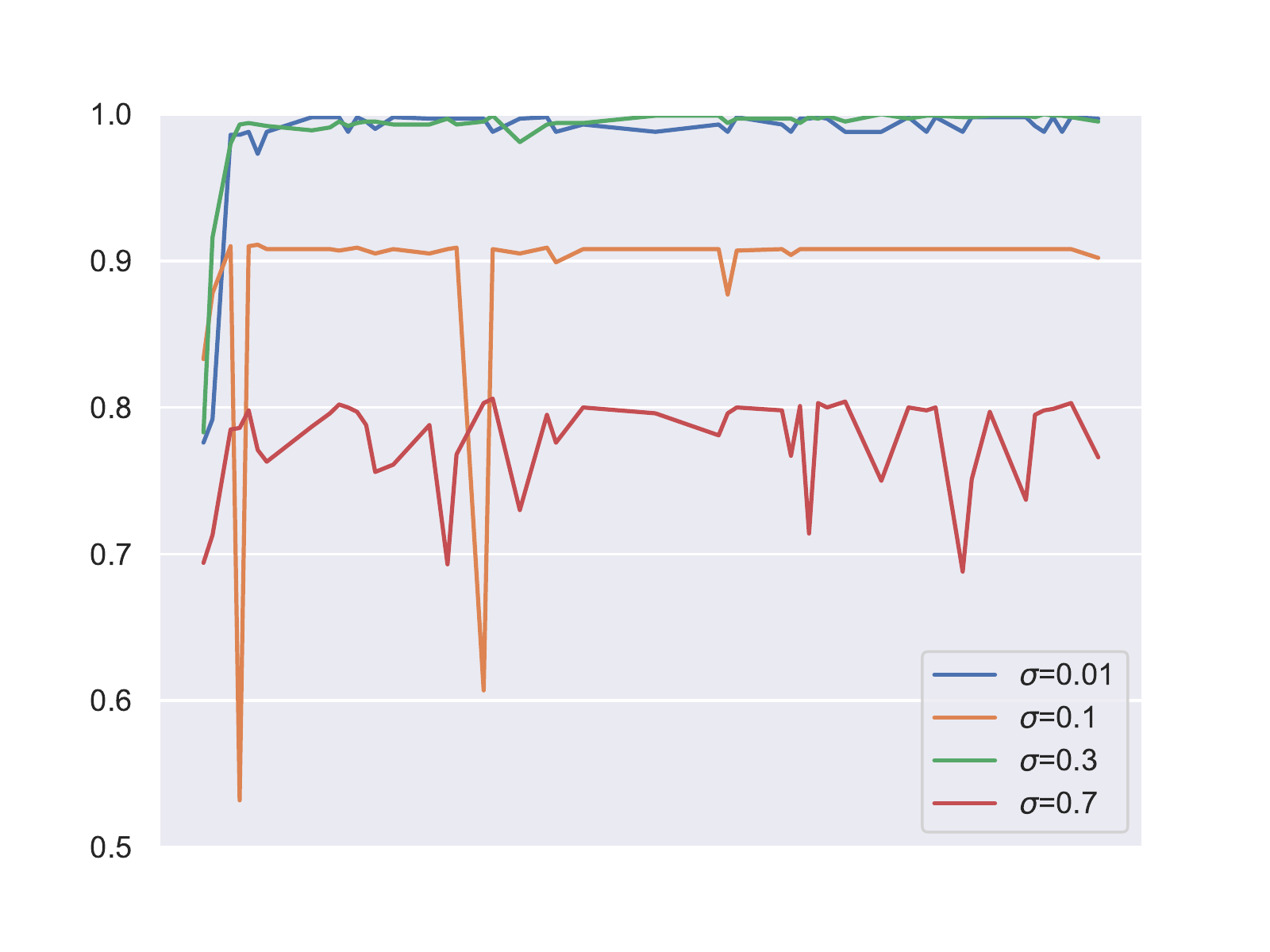}
      \end{minipage}
      \label{fig:fcniss}
      \caption{Epoch test accuracy for the architecture FCN (left) and ISS (right) for different values of $\sigma$.
      ($N=100,$ training size: $1000$, test size: $100$, $100$ epochs).}
  \end{figure}




\end{example}

Abstractly we can describe functions that extract such \emph{chronological features} of sequences
$x \in \R^N$
in the following form
\newcommand\pool{\operatorname{pool}}
\begin{align*}
  x \mapsto \pool\Big( K(x_{i_1},\dots,x_{i_k}) : \{ i_1 < \dots < i_k \} \subset [N] \Big),
\end{align*}
where
\begin{align*}
  K\colon \R^k \to \R, \quad
  \pool\colon \R^{\binom{N}k} \to \R.
\end{align*}
Now, in this generality, such features are computationally intractable, even for modest values of $N$ and $k$,
since $K$ has to be evaluated $\binom{N}k$ times.
The iterated-sums signature presented in this work represents a special case of the functions $K$ and $\pool$ that \emph{is} tractable.
The application of this structure in deep learning pipelines will be addressed in subsequent work.

\end{enumerate}

~\\

The central object of this work, the \emph{iterated-sums signature} $\ISS^\S$
will be properly defined in \eqref{S-ISS} below.
To get there, we need to work through some algebraic background
in \Cref{sec:tss} first.
We therefore now give a preview
in the setting of the tropical semiring $\S=\R_\mp$
with addition $\oplus_\mp$ given by the minimum,
and multiplication $\odot_\mp$ given by the sum of real numbers.
Let $(z_1, z_2, z_3, \dots)$, $z_i \in \R$, be an infinite time series.
Define $\ISS^{\R_\mp}$, indexed by words $w=w_1\cdots w_k$ in the alphabet $A=\{1,2,3,\ldots\}$%
\footnote{
Caution: in the main text we write (in the one-dimensional case) the alphabet as $\{[\w1],[\w1^2],[\w1^3], \dots\} \cong \{1,2,3,\dots\}$, since this notation extends nicely to higher dimensions.}
and $1 \le s \le t < +\infty$, as
\begin{align*}
  \Big\langle \ISS^{\R_\mp}_{s,t}(z), w \Big\rangle
  &\coloneqq
  \sideset{}{_\mp}\bigoplus_{s < j_1 < \dots < j_k\le t} z_{j_1}^{\odot_\mp w_1} \odot_\mp \dots \odot_\mp z_{j_k}^{\odot_\mp w_k} \\
  &=
  \min_{s < j_1 < \dots < j_k \le t} \{ w_1 \cdot z_{j_1} + \dots + w_k\cdot z_{j_k} \}.
\end{align*}
For example (see also \Cref{fig:plots})
\begin{align}
  \label{eq:thirdOrder}
  \begin{split}
  \Big\langle \ISS^{\R_\mp}_{s,t}(z), 1 \Big\rangle   &= \min_{s<j\le t} z_{j}\\
  \Big\langle \ISS^{\R_\mp}_{s,t}(z), 74 \Big\rangle  &= \min_{s<j_1 < j_2\le t} \{ 7\cdot z_{j_1} + 4\cdot z_{j_2} \}\\
  \Big\langle \ISS^{\R_\mp}_{s,t}(z), 714 \Big\rangle &= \min_{s<j_1 < j_2 < j_3\le t} \{ 7\cdot z_{j_1} + z_{j_2} + 4\cdot z_{j_3} \}.
  \end{split}
\end{align}
\begin{figure}[!ht]
    \centering
        \includegraphics[width=0.7\textwidth]{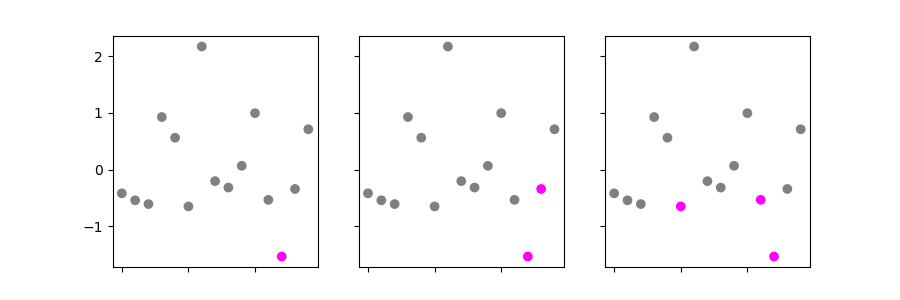}
    \caption{Example of data points (in magenta) attaining the minima in \cref{eq:thirdOrder}.}
    \label{fig:plots}
\end{figure}
We remark two, maybe, non-obvious properties of this object.
Firstly,
in order to calculate $\ISS^{\R_\mp}$ over large intervals, it suffices to
calculate it over small intervals:
\begin{example}
\label{ex:chen}
For $0\le s < t < u$,
\allowdisplaybreaks
\begin{align*}
  &\Big\langle \ISS^{\R_\mp}_{s,u}(z), 74 \Big\rangle  
  =  \sideset{}{_{\mp}}\bigoplus_{s < j_1 < j_2\le u} z_{j_1}^{\odot_{\mp} [\w1^7]} \odot_{\mp} z_{j_2}^{\odot_{\mp} [\w1^4] }\\
  &= \min_{s < j_1 < j_2 \le u } \{ 7 z_{j_1} + 4 z_{j_2} \}\\
  &=
  \min \Big\{
    \min_{ s < j_1 < j_2 \le t}  \{ 7 z_{j_1} + 4 z_{j_2} \},\\
  &\qquad
    \min_{ t < j_1 < j_2 \le u}  \{ 7 z_{j_1} + 4 z_{j_2}\},
    \min_{ s < i  \le t} \{ 7 z_{i}  \}
  +
    \min_{  t < j  \le u} \{ 4 z_{j}\}
  \Big\} \\
  &=
  \Big\langle \ISS^{\R_\mp}_{s,t}(z), 74 \Big\rangle
  \oplus_{{\mp}}
  \Big\langle \ISS^{\R_\mp}_{t,u}(z), 74 \Big\rangle\\
  &\qquad
   \oplus_{{\mp}} \left( 
     \Big\langle \ISS^{\R_\mp}_{s,t}(z), 7 \Big\rangle
    \odot_{{\mp}}
  \Big\langle \ISS^{\R_\mp}_{t,u}(z), 4 \Big\rangle \right).
\end{align*}
\end{example}
This is also the reason why expressions that seem to have polynomial complexity (after all, the third-order iterated sum in \eqref{eq:thirdOrder} takes the maximum of
$\O(|t-s|^3)$-terms), are in fact calculable in linear time, as we will see in \Cref{sec:algo}.

Secondly, we note that the \emph{product} (in the semiring) of any 
iterated sums (which are iterated minima in the current example)
can be written as the \emph{sum} (in the semiring) of (different) iterated sums:
\begin{example}
\label{ex:qs}
In the min-plus semiring $\S=\R_\mp$ we have
\begin{align*}
  &\Big\langle \ISS^{\R_\mp}_{s,t}(z), 1 \Big\rangle
  \odot_\s
  \Big\langle \ISS^{\R_\mp}_{s,t}(z), 74 \Big\rangle \\
  &=
  \min_{s<i\le t} \{ z_i \}
  +
  \min_{s<j < k\le t} \{ 7 z_j  + 4 z_k \} \\
  &=
  \min_{\substack{s<i\le t \\s<j<k\le t}} \{ z_i + 7 z_j  + 4 z_k \} \\
  &=
  \min\Big\{
  \min_{s<i<j<k\le t} \{ z_i + 7 z_j  + 4 z_k \},
  \min_{s<i=j<k\le t} \{ z_i + 7 z_j  + 4 z_k \},
  \min_{s<j<i<k\le t} \{ z_i + 7 z_j  + 4 z_k \}, \\
  &\qquad\qquad
  \min_{s<j<k=i\le t} \{ z_i + 7 z_j  + 4 z_k \},
  \min_{s<j<k<i\le t} \{ z_i + 7 z_j  + 4 z_k \} \Big\} \\
  &=
  \Big\langle \ISS^{\R_\mp}_{s,t}(z), 174 \Big\rangle
  \oplus_\s
  \Big\langle \ISS^{\R_\mp}_{s,t}(z), 84 \Big\rangle
  \oplus_\s
  \Big\langle \ISS^{\R_\mp}_{s,t}(z), 714 \Big\rangle\\
  &\qquad\qquad
  \oplus_\s
  \Big\langle \ISS^{\R_\mp}_{s,t}(z), 75 \Big\rangle
  \oplus_\s
  \Big\langle \ISS^{\R_\mp}_{s,t}(z), 741 \Big\rangle.
\end{align*}
\end{example}

Both of these facts might come as no surprise to people familiar
with iterated integrals or iterated sums over fields.
Indeed,
the first property is a version of \emph{Chen's identity}.
It just says that the computation of iterated integrals and iterated sums
can be split into calculations on subintervals.
This property is usually encoded algebraically
by the non-cocommutative \emph{deconcatenation} coproduct on the unital tensor algebra over an alphabet.
The general form of Chen's identity in our setting is stated in \Cref{lem:chen}.

Integration by parts implies that linear combinations of iterated integrals are closed under multiplication.
This finds its abstract algebraic formulation in terms of the commutative shuffle product on the unital tensor algebra over an alphabet \cite{Che1957,Ree1958}.
Analogously, its discrete counterpart, i.e., summation by parts,
permits to define an algebra on iterated sums, leading to the notion of commutative quasi-shuffle algebra \cite{Gai1994}.
The general form of the quasi-shuffle identity in our setting is found in \Cref{lem:qsIdentity}.

Maybe more interestingly, new phenomena appear when working over general semirings.
As we will see in \Cref{sec:timeWarpingInvariants},
over an \emph{idempotent} semiring,
non-strict iterated sums satisfy a \emph{shuffle} identity and in this sense behave like iterated \emph{integrals}.
These non-strict iterated sums also give
a nice way to get certain time-warping invariants of a real-valued time series,
covering expression \eqref{eq:min}.

\smallskip 

The paper is organized as follows. 
In \Cref{sec:tss} we define the iterated-sums signature over a commutative semiring.
In \Cref{sec:qsym} we take a closer look at quasisymmetric functions, which are underlying the
iterated-sums signature and are of independent interest.
In \Cref{sec:timeWarpingInvariants} we return to the question of time warping invariants in the context of iterated-sums signature over a commutative semiring.
\Cref{sec:algo} exposes an algorithm for efficient calculation of iterated sums. It is based on Chen's identity,
but alternatively can be considered as a dynamic programming principle in play here.
We finish with conclusions and an outlook in Section \ref{sec:concl}.
In Appendix \ref{sec:category} we present a categorical view on semirings, semimodules, and semialgebras.
Such a categorical view is useful in highlighting the similarities
to the theory of rings, modules, and algebras.

\textbf{Related works}

We finish this introduction by mentioning related literature.
We already indicated how
iterated sums and integrals have, in the last decade, been successfully applied
as a feature extraction method in machine learning.
The relation of iterated sums to the Hopf algebra of quasisymmetric functions \cite{MR1995} was established in \cite{DET2020}.

Symmetric functions form an important subspace and the generalization of
this subspace to the setting of semirings has been investigated in \cite{CK2016,KL2020,KL2019}.

Semirings play an important role in computer science. They appear, for example, in the closely related fields of 
language processing \cite{Goo1999}, 
the theory of algorithms \cite{cygan2015parameterized,gondran2008graphs}, 
the theory of weighted automata \cite{baccelli1992synchronization,sakarovitch2009elements},
of Petri nets \cite{heidergott2014max},
shortest-paths problems in weighted directed graphs \cite{Moh2002,Fle1980},
and iteration theories \cite{bloom1993iteration}.

The tropical semiring, in particular, has been intensely studied, for example
in algebraic geometry \cite{MS2015},
in statistics \cite{pachter2004tropical},
in economics \cite{baldwin2013tropical},
and in biology \cite[Section 2]{pachter2005algebraic}.
Its linear algebra is well-understood \cite{butkovivc2010max,akian2006max}.

\medskip

\section{Iterated-sums signatures over a semiring}
\label{sec:tss}

We start by introducing basic concepts from semiring theory. Relevant references are \cite{KS1986,MS2015,Wor2009}. The definitions and constructions recalled here are in some sense ``hands-on''. For a more abstract, i.e. categorical, view see \Cref{sec:category}. 

\begin{definition}
	A \textbf{monoid} is a non-empty set \(M\) together with an associative product \(\cdot\colon M\times M\to M\) and a neutral element \(\one_M\in M\), i.e., \(m \cdot\one_M=\one_M\cdot m=m\) for all \(m\in M\).
\end{definition}

For example, starting from an alphabet $A=\{a_1,\ldots,a_n\}$, the set of (finite) words over $A$ forms, under concatenation, the free non-commutative monoid, denoted by $A^*$.
The empty word, $\emptyWord$, is the neutral element. The length of a word $w=a_{i_1} \cdots a_{i_k} \in A^*$ is denoted $\len(w)=k$. A \textbf{monoid morphism} is a map between monoids which is compatible with the products
and takes the neutral element to the neutral element.
Note that for an alphabet $A$ and any monoid $(M,\centerdot,1_M)$, every map $\phi\colon A \to M$ can be uniquely extended to a monoid morphism  $\hat{\phi}\colon A^* \to M$ by defining $\hat{\phi}(\emptyWord)=1_M$ and for any word $w=a_{i_1} \cdots a_{i_k} \in A^*$, $\hat{\phi}(a_{i_1} \cdots a_{i_k})\coloneqq\phi(a_{i_1}) \centerdot \cdots \centerdot \phi(a_{i_k})$. In other words, $A^*$ is the \textbf{free monoid} over $A$.

The notion of semiring combines two monoids in a compatible, i.e.~distributive, way.
However, contrary to rings, the invertibility under addition is not part of the requirements.  

\begin{definition}
\label{def:semiring}
  The tuple $(\mathbb{S},\oplus_\s,\odot_\s,\zero_\s,\one_\s)$ is a \textbf{commutative semiring} if
  \begin{itemize}
		\item $(\mathbb{S},\oplus_\s,\zero_\s)$ is a commutative monoid  with unit $\zero_\s$,
		\item $(\mathbb{S},\odot_\s,\one_\s)$ is a commutative monoid with unit $\one_\s$, 
		\item $\zero_\s \odot_\s s = s \odot_\s \zero_\s = \zero_\s$ for all $s \in \mathbb{S}$, and
    \item multiplication distributes over addition
      \begin{align}
        \label{eq:distributes}
      a \odot_\s (b\oplus_\s c) 
      = (a\odot_\s b) \oplus_\s (a \odot_\s c),
      \quad 
      (a \oplus_\s b) \odot_\s c = (a\odot_\s c) \oplus_\s (b \odot_\s c).
      \end{align}
  \end{itemize}
\end{definition}

Note that the parentheses on the right-hand sides of the identities in \eqref{eq:distributes} can be omitted when assuming the common precedence of multiplication over addition.
A semiring $\mathbb{S}$ is called \textbf{idempotent}, if for all elements $a \in \mathbb{S}$ we have that $a \oplus_\s a = a$.

Semirings form an essential part of the modern theory of automata and languages. We refer the reader
to the introductory references \cite{KS1986,CL2008}.

\begin{example}~
  Any commutative ring, in particular the field of reals $(\R,+,\cdot,0,1)$,
  forms a commutative semiring.

The paradigms of ``honest'' semirings whose underlying sets are subsets of the reals are  
\begin{enumerate}
  \item $\R_{\mp}{\mathrm{:}}$ min-plus semiring $(\R \cup \{+\infty\},\min,+,+\infty,0)$
  \item $\R_{\maxp}{\mathrm{:}}$  max-plus semiring $(\R \cup \{-\infty\},\max,+,-\infty,0)$,
\end{enumerate}
which are also known as tropical, respectively arctic, semirings.
Here minimum, respectively maximum, are considered as binary operations replacing the usual additive structure on $\R$, and addition becomes multiplication. This results in particular arithmetic rules, e.g., $3 \oplus_{\scriptscriptstyle{{\maxp}}} 3 = 3$, $4 \oplus_{\maxp} 3 = 4$, and $3 \odot_{\maxp} 3 = 6$, $-1 \odot_{\maxp} -1 = -2$.    
\label{ex:semirings}
  \begin{enumerate}
  \setcounter{enumi}{2}
  \item
		possibilistic semiring\footnote{Also known as Viterbi or Bayesian semiring \cite{SW2018}.} $([0,1],\max,\cdot,0,1)$,
	\item $\N{\mathrm{:}}$ non-negative integers $(\N,+,\cdot,0,1)$,
	\item $\R_{\scriptscriptstyle{\mathrm{max-min}}}{\mathrm{:}}$ bottleneck semiring $(\R \cup \{\pm \infty\},\max,\min,-\infty,+\infty)$,
	\item $\overline{\R}_{\scriptscriptstyle{\mathrm{max}}}{\mathrm{:}}$ completed max-plus semiring $(\R \cup \{\pm\infty\},\max,+,-\infty,0)$,
	\item expectation semiring (or gradient semiring) \cite{Eis2002}, $(\R_{\ge 0} \times V, \oplus, \odot, (0,0),(1,0))$ with $V$ an arbitrary $\R$-vector space, and
  \allowdisplaybreaks
    \begin{align*}
      (a,v) \oplus (a',v') &\coloneqq (a+a', v+v') \\
      (a,v) \odot (a',v')  &\coloneqq (a \cdot a', a' v + a v').
    \end{align*}
  \end{enumerate}

  There are also examples of semirings whose underlying sets are \emph{not} given by (subsets of) the real line.
  \begin{enumerate}
    \setcounter{enumi}{7}
		\item semiring of (bounded) polytopes \cite[Proposition 2.23]{pachter2005algebraic},
		\item $k$-best proof semiring \cite{Goo1999,GS2009},
		\item $k$-tropical semiring \cite{Moh2002},
		\item semiring of formal languages \cite{DK2009},
		\item semiring of binary relations \cite{DK2009},
		\item semiring of subsets of a set $M$ \cite[Beispiel 2.10.a)]{NS2018} $(2^M, \cup, \cap, \emptyset, M)$, and
    \item $\mathbb{B} {\mathrm{:}}$ Boolean semiring $(\{\text{false},\text{true}\}, \mathrm{or}, \mathrm{and}, \text{false}, \text{true})$.
  \end{enumerate}
  Regarding the last two examples, in fact, any distributive lattice (with minimal and maximal element) naturally yields a commutative semiring,
  \cite[Proposition 2.25]{golan2013semirings}.

  \begin{enumerate}
    \setcounter{enumi}{14}
  \item semirings constructed from t-norms \cite[Example 6]{KW2008}.
  \end{enumerate}
\end{example}

\smallskip

We note that summation by parts holds in a semiring. Indeed, multiplying two sums%
\footnote{
  In order to not clutter the notation, for the moment we do not
specify bounds for the summation indices.
To have a well-defined sum it suffices to assume that the sequences $a_1,a_2,\dotsc$ and $b_1,b_2,\dotsc$ are eventually equal to $\zero_\s$.
}
over semiring elements $a_i,b_j \in  \mathbb{S}$ yields
  \allowdisplaybreaks
   \begin{align}
 \label{sumbyparts}
\begin{split} 
   \lefteqn{\left( \sideset{}{_\s}\bigoplus_{i} a_i \right)
    \odot_\s
    \left( \sideset{}{_\s}\bigoplus_{j} b_j \right)
    =
    \sideset{}{_\s}\bigoplus_{i,j} a_i \odot_\s b_j} \\
    &=
    \sideset{}{_\s}\bigoplus_{i<j} \left( a_i \odot_\s b_j \right)
    \oplus_\s 
    \sideset{}{_\s}\bigoplus_{j<i} \left( a_i \odot_\s b_j \right)
    \oplus_\s 
    \sideset{}{_\s}\bigoplus_{i} \left( a_i\odot_\s b_i \right).
  \end{split}
\end{align}   
Regarding iterated sums of depth\footnote{Here, \emph{depth} of an iterated sum refers to the number of indices in the summation.} $k$
\begin{align}
\label{iteratedsums} 
	\sideset{}{_\s}\bigoplus_{j_k}
    \sideset{}{_\s}\bigoplus_{j_{k-1} < j_k}
    \dots
    \sideset{}{_\s}\bigoplus_{j_{1} < j_2}
      a_{j_1}\odot_\s \dots \odot_\s a_{j_k}
   =
	\sideset{}{_\s}\bigoplus_{j_1 < \dots 
 	< j_k} a_{j_1}\odot_\s \dots \odot_\s a_{j_k},
\end{align}
one can show that, like in the usual ring-setting, \eqref{sumbyparts} implies that the product of two iterated sums of depths $k_1$ and $k_2$ can be expressed in terms of a linear combination of iterated sums of depths $k$, with $\max(k_1,k_2) \leq k \leq k_1+k_2$. The reduction of the lengths of the iterated sums is due to the diagonal terms (, e.g., the third summand on the right-hand side of \eqref{sumbyparts}). The algebraic formulation of this leads to the quasi-shuffle identity (see below). 

\smallskip

Regarding \eqref{sumbyparts}, we remark on a peculiarity in the semiring setting.
The lack of inverses with respect to addition makes
summation-by-parts for non-strict iterated sums less appealing
since the space of such sums is not closed under multiplication. 
Indeed, replacing in \eqref{iteratedsums}
the strict inequalities of the indices by \emph{non-strict}
inequalities and multiplying two such expressions
one arrives at the sum of expressions
that \emph{do} contain strict inequalities as well.
For example
\begin{align}
 \label{nonstrictsums}
\begin{split} 
 &\left( \sideset{}{_\s}\bigoplus_{i_1 \le i_2} z_{i_1}^{\odot_\s 7} \odot_\s z_{i_2}^{\odot_\s 3} \right)
    \odot_s
    \left( \sideset{}{_\s}\bigoplus_{i_3} z_{i_3}^{\odot_\s 5} \right) \\
    &=
    \left( \sideset{}{_\s}\bigoplus_{i_1 \le i_2 \le i_3} z_{i_1}^{\odot_\s 7} \odot_\s z_{i_2}^{\odot_\s 3} \odot_\s z_{i_3}^{\odot_\s 5} \right)
    \oplus_\s
    \left( \sideset{}{_\s}\bigoplus_{i_1 \le i_3 < i_2} z_{i_1}^{\odot_\s 7} \odot_\s z_{i_2}^{\odot_\s 3} \odot_\s z_{i_3}^{\odot_\s 5} \right) \\
    &\qquad
    \oplus_\s
    \left( \sideset{}{_\s}\bigoplus_{i_3 < i_1 \le i_2} z_{i_1}^{\odot_\s 7} \odot_\s z_{i_2}^{\odot_\s 3} \odot_\s z_{i_3}^{\odot_\s 5} \right). \\
\end{split}
\end{align}
However, if we consider \eqref{nonstrictsums} in an \emph{idempotent} semiring,
one observes an interesting phenomenon.
The fact that $a \oplus_\s a = a$ for all elements $a$ in such a semiring implies the somewhat surprising identity 
\begin{align}
 \label{Idemsumbyparts}
\left( \sideset{}{_\s}\bigoplus_{i} a_i \right)
    \odot_\s
    \left( \sideset{}{_\s}\bigoplus_{j} b_j \right)
    =
    \sideset{}{_\s}\bigoplus_{i \le j} \left( a_i \odot_\s b_j \right)
    \oplus_\s 
    \sideset{}{_\s}\bigoplus_{j\le i} \left( a_i \odot_\s b_j \right),
\end{align}
where we used that 
$$ 
	\sideset{}{_\s}\bigoplus_{i} \left( a_i\odot_\s b_i \right) \oplus_\s 
    \sideset{}{_\s}\bigoplus_{i} \left( a_i\odot_\s b_i \right) = \sideset{}{_\s}\bigoplus_{i} \left( a_i\odot_\s b_i \right),
$$
The example
\eqref{nonstrictsums} then becomes
 \allowdisplaybreaks{
\begin{align}
 \label{shufflesum}
 \begin{split}
 &\left( \sideset{}{_\s}\bigoplus_{i_1 \le i_2} z_{i_1}^{\odot_\s 7} \odot_\s z_{i_2}^{\odot_\s 3} \right)
    \odot_s
    \left( \sideset{}{_\s}\bigoplus_{i_3} z_{i_3}^{\odot_\s 5} \right) \\
    &=
    \left( \sideset{}{_\s}\bigoplus_{i_1 \le i_2 \le i_3} z_{i_1}^{\odot_\s 7} \odot_\s z_{i_2}^{\odot_\s 3} \odot_\s z_{i_3}^{\odot_\s 5} \right)
    \oplus_\s
    \left( \sideset{}{_\s}\bigoplus_{i_1 \le i_3 \le i_2} z_{i_1}^{\odot_\s 7} \odot_\s z_{i_2}^{\odot_\s 3} \odot_\s z_{i_3}^{\odot_\s 5} \right) \\
    &\qquad
    \oplus_\s
    \left( \sideset{}{_\s}\bigoplus_{i_3 \le i_1 \le i_2} z_{i_1}^{\odot_\s 7} \odot_\s z_{i_2}^{\odot_\s 3} \odot_\s z_{i_3}^{\odot_\s 5} \right).
    \end{split}
\end{align} }    

Hence, in the idempotent case, products of non-strict iterated sums satisfy the \emph{shuffle} relation.
We return to this in \Cref{sec:timeWarpingInvariants}.

\begin{definition}
\label{def:semimod}
  A \textbf{semimodule} over a commutative semiring $(\mathbb{S},\oplus_\s,\odot_\s,\zero_\s,\one_\s)$ consists of a commutative monoid $(M,+_M,\zero_M)$ and a scalar multiplication $\mathbb{S} \times M \to M$, $(s,m) \mapsto sm$, satisfying for all $s,s' \in \mathbb{S}$ and $m,m' \in M$ 
  \begin{align*}
    \one_\s m = m 			&\qquad \zero_\s m = \zero_M \\
    (s \odot_\s s') m  = s (s' m) 	&\qquad (s \oplus_\s s') m = s m +_M s' m  \\
    s (m +_M m') &= s m +_M s m'.
  \end{align*}  
Note that if the underlying semiring $\mathbb{S}$ is idempotent, then the semimodule $M$ is idempotent as well.

Let $(N,+_N,\zero_N)$ be another semimodule over $\mathbb{S}$. A map $\phi\colon M \to N$ is a \textbf{semimodule morphism} (or an \textbf{$\S$-linear map}) if for all $s,s' \in \mathbb{S}$ and $m,m' \in M$
  \begin{align*}
    \phi(sm +_M s'm') &= s\phi(m) +_N s'\phi(m').
  \end{align*}
\end{definition}

\begin{example}
An incarnation of the  \textbf{free $\mathbb{S}$-semimodule} $\mathbb{F}$ on a set $D$  is given by functions $f\colon D \to \mathbb{S}$ with finite support, i.e., $f(d) = \zero_\s$ for all but finitely many elements $d \in D$. The action of $\mathbb{S}$ as well as the addition are defined pointwise.
\end{example}

\begin{definition}
  \label{def:semialg}
  An associative \textbf{semialgebra} over a commutative semiring $\mathbb{S}$ consists of a semiring $(\mathbb{A},\oplus_\y,\odot_\y,\zero_\y,\one_\y)$ such that $(\mathbb{A}, \oplus_\y,\zero_\y)$ is a semimodule over $\mathbb{S}$ and such that the semimodule structure is compatible with $\odot_\y$ in the following way
  \begin{align*}
    s (a \odot_\y a') = (s a) \odot_\y a' = a \odot_\y (s a').
  \end{align*}
\end{definition}

\begin{remark}
Motivated by summation by parts \eqref{sumbyparts} and the particular property of iterated non-strict sums \eqref{Idemsumbyparts}, one can introduce the notion of a Rota--Baxter $\mathbb{S}$-semialgebra.
  Let $\mathbb{A}$ be a  $\mathbb{S}$-semialgebra. A \emph{Rota--Baxter map} of weight $\lambda \in \mathbb{S}$ is a $\mathbb{S}$-linear map $R\colon\mathbb{A} \to \mathbb{A}$ satisfying for any $x,y \in \mathbb{A}$
\begin{equation}
\label{semiRBlambda}
	R(x) \odot_\y R(y) 
	= R\big(R(x)\odot_\y y 
	\oplus_\y x\odot_\y R(y) \big)
	\oplus_\y \lambda R(x \odot_\y y).
\end{equation}
Note that if the semiring $\mathbb{S}$, and therefore also the semialgebra $\mathbb{A}$, is idempotent the map $\tilde{R}\coloneqq\lambda \id_\y \oplus_\y R$ satisfies the particular relation \eqref{semiRBlambda} as well.
In fact, we have the more surprising (weight zero) identity%
\footnote{
Indeed, we see that by expanding the right-hand side of \eqref{semiRBzero} we obtain
  \begin{align*}
  \lefteqn{	\tilde{R}\big(\tilde{R}(x)\odot_\y y 
	\oplus_\y x \odot_\y \tilde{R}(y) \big) 
	= \tilde{R}\big( (\lambda \oplus_\s  \lambda) x \odot_\y y 
		\oplus_\y   R(x) \odot_\y y 
		\oplus_\y  x \odot_\y R(y) \big)}\\
	&= (\lambda \odot_\s  \lambda) x \odot_\y y 
		\oplus_\y    \lambda R(x) \odot_\y y 
		\oplus_\y  \lambda x \odot_\y R(y)  	 
		   \oplus_\y  R\big(
	\lambda x \odot_\y y 
	\oplus_\y R(x)\odot_\y y 
	\oplus_\y x\odot_\y R(y) \big)\\
	&=  \lambda \odot_\s \lambda (x \odot_\y y) 
	\oplus_\y  \lambda x \odot_\y R(y) 
	\oplus_\y  \lambda R(x) \odot_\y y  
	\oplus_\y  R(x) \odot_\y R(y)
	=\tilde{R}(x) \odot_\y \tilde{R}(y) .
\end{align*}}
(compare also \eqref{shufflesum})
\begin{equation}
\label{semiRBzero}
	\tilde{R}(x) \odot_\y \tilde{R}(y) 
	= \tilde{R}\big(\tilde{R}(x)\odot_\y y 
	\oplus_\y x\odot_\y \tilde{R}(y) \big).
\end{equation}
\end{remark}

\smallskip

We now consider formal series over the (possibly infinite) alphabet $A$ with coefficients in a commutative semiring $\mathbb{S}$ of the form
\begin{equation} 
  \label{formal}
	F \coloneqq \sum_{w \in A^*} c_w w,\quad c_w \in \S.
\end{equation}
The set of all such series is denoted by $\mathbb{S}\llangle A \rrangle$. For a series $F \in \mathbb{S}\llangle A\rrangle$, the support, $\mathrm{supp}(F)$, consists of all words in $w\in A^*$ with coefficient $c_w$ different from $\zero_\s$. We denote with $\mathbb{S}\langle A\rangle$ the subset of series with finite support. We may view a series \eqref{formal} as a map $F\colon A^* \to \S$, with
\begin{align*}
  \langle F, w \rangle_\s \coloneqq F(w) \coloneqq c_w.
\end{align*}
Each map $F\colon A^*\to\mathbb S$ can be uniquely extended to a linear map $F\colon\mathbb{S}\langle A\rangle\to \mathbb{S}$,
and we denote this pairing by $\langle ., . \rangle_\s\colon\mathbb S\llangle A\rrangle\times A^*\to\mathbb S$ still.

We can equip $\mathbb{S}\llangle A\rrangle$ with a linear and multiplicative structure by defining
\begin{align*}
  \langle sF,w \rangle_\s &\coloneqq s\langle F,w \rangle_\s, \quad s \in \S \\
	\langle F_1 + F_2,w \rangle_\s 
	  &\coloneqq \langle F_1,w \rangle_\s \oplus_\s \langle F_2,w \rangle_\s \\
	\langle F_1F_2,w \rangle_\s 
	  &\coloneqq \sideset{}{_\s}\bigoplus_{vu=w}\langle F_1,v \rangle_\s\odot_\s\langle  F_2,u \rangle_\s.
\end{align*}

For instance, let $F= s_1 a_1 a_2  + s_2 a_1$ and $G = t_1 a_3 + t_2 a_2a_3$ be elements in $\mathbb{S}\langle A\rangle$ then 
 \begin{align*}
    FG
      &=
      \left( (s_1 \odot_\s t_1) \oplus_\s (s_2 \odot_\s t_2) \right) a_1 a_2 a_3
      +
      \left( s_1 \odot_\s t_2 \right) a_1a_2a_2a_3
      +
      \left(s_2 \odot_\s t_1\right) a_1a_3.
  \end{align*}
This turns $\mathbb{S}\llangle A\rrangle$, as well as $\mathbb{S}\langle A\rangle$, into $\mathbb{S}$-semialgebras.  
The constant series are given by defining for any element $s \in \mathbb{S}$ the series $F_s\coloneqq s \emptyWord$, which include in particular the constant series $1$ and $0$.\footnote{Recall that $\emptyWord\in A^*$ is the empty word.}

As is the case over rings, $\mathbb{S}\langle A\rangle$ is the \textbf{free associative $\S$-semialgebra} over the alphabet $A$. This manifests in the following universal property: 
\begin{proposition}
	\label{prp:SA.univ}
	For any $\mathbb{S}$-semialgebra $\mathbb{U}$ and map $\phi\colon A \to \mathbb{U}$ there exists a unique $\mathbb{S}$-semialgebra morphism extending $\hat\phi \colon \S\langle A \rangle \to \mathbb{U}$ extending $\phi$.
	In other words, every map \(\phi\colon A\to\mathbb U\) admits a unique mutiplicative and \(\mathbb S\)-linear extension to \(\mathbb S\langle A\rangle\).
\end{proposition}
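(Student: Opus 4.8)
The plan is to establish the universal property of $\mathbb{S}\langle A\rangle$ in the same way one proves it for the free associative algebra over a ring, taking care that the only tools available are those of semiring arithmetic (no additive inverses). First I would construct the candidate extension $\hat\phi$ explicitly: since every element of $\mathbb{S}\langle A\rangle$ is a finite $\mathbb{S}$-linear combination of words, and since a word $w=a_{i_1}\cdots a_{i_k}$ is a product of letters in the semialgebra, the only possible $\mathbb{S}$-linear multiplicative extension is forced to be
\begin{align*}
  \hat\phi\Big( \sideset{}{_\s}\bigoplus_{w} c_w w \Big)
  \coloneqq
  \sideset{}{_\s}\bigoplus_{w} c_w\, \hat\phi(w),
  \qquad
  \hat\phi(a_{i_1}\cdots a_{i_k}) \coloneqq \phi(a_{i_1}) \odot_\y \cdots \odot_\y \phi(a_{i_k}),
\end{align*}
with $\hat\phi(\emptyWord)\coloneqq \one_\y$. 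This immediately gives \emph{uniqueness}: any morphism extending $\phi$ must send each word to the corresponding product of images, and must respect $\mathbb{S}$-linear combinations, hence agrees with $\hat\phi$ on all of $\mathbb{S}\langle A\rangle$. Note here I implicitly use the monoid-morphism extension property recalled just after the definition of monoid (applied to the multiplicative monoid $(\mathbb{U},\odot_\y,\one_\y)$), which handles the word part.

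Next I would verify that $\hat\phi$ as defined is genuinely an $\mathbb{S}$-semialgebra morphism, i.e.\ that it is $\mathbb{S}$-linear, multiplicative, and unital. Unitality is the statement $\hat\phi(1)=\hat\phi(\emptyWord)=\one_\y$, which is immediate. $\mathbb{S}$-linearity, $\hat\phi(sF+s'G)=s\hat\phi(F)\oplus_\y s'\hat\phi(G)$, follows directly from the definition together with the semimodule axioms in \Cref{def:semimod} and commutativity of $\oplus_\y$; this is a routine check. For \emph{multiplicativity} one reduces, by $\mathbb{S}$-bilinearity of the product on $\mathbb{S}\langle A\rangle$ and of $\odot_\y$ on $\mathbb{U}$ (using the compatibility $s(a\odot_\y a')=(sa)\odot_\y a'$ from \Cref{def:semialg}), to checking it on basis elements, i.e.\ on words: one needs $\hat\phi(vu)=\hat\phi(v)\odot_\y\hat\phi(u)$ for words $v,u$, which is exactly the monoid-morphism property of $\hat\phi$ restricted to $A^*$ together with associativity of $\odot_\y$. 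The bookkeeping here is the same as in the ring case; the key point is that every distributive manipulation needed is available from \eqref{eq:distributes} and the semialgebra compatibility axiom, and at no stage is subtraction invoked.

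The main (modest) obstacle is purely expository: making the reduction ``from series to words'' rigorous while being honest that finite support is what makes the sums in the definition of $\hat\phi$ and in the product $F_1F_2=\sideset{}{_\s}\bigoplus_{vu=w}\cdots$ well-defined — so the statement is really about $\mathbb{S}\langle A\rangle$ and not $\mathbb{S}\llangle A\rrangle$. One should also be slightly careful that the deconcatenation-type sum defining the product on $\mathbb{S}\langle A\rangle$ interacts correctly with $\hat\phi$; concretely, $\hat\phi(F_1F_2) = \sideset{}{_\s}\bigoplus_{w}\big(\sideset{}{_\s}\bigoplus_{vu=w}\langle F_1,v\rangle_\s\odot_\s\langle F_2,u\rangle_\s\big)\hat\phi(w)$, and one rearranges this double sum (legal, since finitely many terms are nonzero) into $\big(\sideset{}{_\s}\bigoplus_v \langle F_1,v\rangle_\s\hat\phi(v)\big)\odot_\y\big(\sideset{}{_\s}\bigoplus_u \langle F_2,u\rangle_\s\hat\phi(u)\big)$ using $\hat\phi(vu)=\hat\phi(v)\odot_\y\hat\phi(u)$ and the semialgebra axioms. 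This is a finite, inverse-free computation, so it goes through verbatim from the classical proof.
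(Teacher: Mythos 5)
Your proof is correct, and it is precisely the ``standard'' direct verification that the paper explicitly declines to write out (the text says ``the proof of \Cref{prp:SA.univ} is standard, and therefore we omit it''), so there is no discrepancy in the mathematics. The one thing worth flagging is that the paper's \emph{only explicit} route to this result is a different one: it points to \Cref{sec:category}, where the universal property is obtained categorically, by recognizing $\mathbb{S}\langle A\rangle$ as $F'(F(A))$ — first the free $\mathbb{S}$-semimodule on the set $A$, then the free monoid object in the monoidal category $\mathsf{Mod}_{\mathbb{S}}$ of semimodules, using \Cref{prop:free} (Mac Lane's free-monoid theorem) together with \Cref{prop:complete} and \Cref{prop:preserves} to verify its hypotheses (countable coproducts preserved by $-\otimes A$). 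Your direct computation and that adjunction argument prove the same statement; yours is more elementary and self-contained, while the categorical route buys uniformity — it shows that freeness here is an instance of the same mechanism that gives the free monoid over a set and the tensor algebra over a vector space, and it dispenses once and for all with the ``is subtraction ever used?'' worry, since the construction never leaves the additive-inverse-free setting. Your parenthetical care about finite support and the double-sum rearrangement is exactly the right place to be careful in the hands-on version, and you handle it correctly.
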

The proof of \Cref{prp:SA.univ} is standard, and therefore we omit it. For an indirect proof, we refer the reader to \Cref{sec:category}.

We shall now equip the $\mathbb{S}$-semimodule $\mathbb{S}\langle A\rangle$ with another product.
This product is a natural extension of the well-known shuffle (or Hurwitz) product commonly defined
in automata theory \cite{KS1986}. For this we assume that the alphabet $A$ carries a commutative semigroup product denoted by the binary bracket operation $[- \, -]\colon A \times A \to A$. Observe that commutativity and associativity permit to express iterations $[a_{i_1}[ \cdots [a_{i_{n-1}}\, a_{i_n} ] \cdots ] \eqqcolon [a_{i_1} \cdots a_{i_n}]$. The commutative \textbf{quasi-shuffle product} on $\mathbb{S}\langle A\rangle$ is defined first on words and then extended bilinearly. For words $u a_i$ and $v a_j$, where $a_i,a_j \in A$, $u,v \in A^*$, we define $ua_i \qs \emptyWord = ua_i =\emptyWord \qs ua_i$ and inductively
\begin{align}
\label{Squasishuff}
\begin{split} 
  ua_i \qs va_j &\coloneqq (u \qs va_j)a_i + (ua_i \qs v)a_j + (u \qs v) [a_i\ a_j]. 
\end{split}
\end{align} 
For instance, $a_i \qs a_j = a_ia_j + a_ja_i + [a_i\ a_j]$. It is easy to observe that for a trivial bracket product on $A$, the quasi-shuffle product \eqref{Squasishuff} reduces to the \emph{shuffle product} on $A^*$. The latter will be denoted with $\shuffle$ and satisfies the recursion \cite{Reu2003}
 \begin{align}
\label{Sshuff}
  ua_{i} \shuffle va_{j} &= (u \shuffle va_j) a_i + (ua_i \shuffle v) a_j.
\end{align} 
It is known that an explicit expression can be defined for the shuffle product in terms of so-called shuffle permutations (bijections). In the case of the quasi-shuffle product \eqref{Squasishuff}, an analogous non-recursive formula can be given in terms of certain surjections \cite{EFM2017,EFMPW15}. We refer the reader to \Cref{sec:quasiShuffle} for details.

\textbf{For the remainder of the paper, we specialize to a specific alphabet $A$.}
Let ${A'}=\{1,2,\ldots,d\}$ and let $A$ be the -- extended -- alphabet containing all formal brackets in elements of $A'$, i.e., all formal monomials in those letters,
\begin{align}
\label{theAlphabet}
  A = \{ [\w1],[\w2],\ldots,[\w{d}], [\w1^2], [\w1\w2], \ldots, [\w{d}^2], [\w1^3], \ldots \}.
\end{align}
Here, for consistency of notation, we write $[\w1] = \w1, \ldots, [\w{d}]=\w{d}$.

We consider the space \(\mathbb S^{d,\N_{\ge1}}_{\zero_\s}\) of $\mathbb{S}^d$-valued time
series of infinite length that are eventually equal to $\zero^d_\s$, whose elements are sequences $z=(z_1,z_2,\ldots, z_N, \zero^d_\s, \zero^d_\s, \ldots)$ of elements $z_i=(z^{(1)}_i,\ldots,z^{(d)}_i) \in \mathbb{S}^d$.  

\begin{example}
  \label{ex:logTimeSeries}
  Let $x=(x_0,x_1,x_2,\ldots)$, $x_n \in \R^d$, be a time series that is eventually constant,
  then $z$ with entries
  \begin{align*}
    z_n^{(i)} \coloneqq -\log |x_n^{(i)}-x_{n-1}^{(i)}|, \quad n = 1,2,3, \ldots
  \end{align*}
  is in $\mathbb{S}_{\zero_\s}^{d,\N_{\ge 1}}$, for $\mathbb{S}$ the tropical semiring  $\R_\mp = (\R \cup \{+\infty\},\min,+,+\infty,0)$.
\end{example}

We now define \textbf{$\mathbb{S}$-iterated-sums signature}
$\ISS_{s,t}^\S(z) \in \S\llangle A \rrangle$ for
$z \in \mathbb{S}_{\zero_\s}^{d,\N_{\ge 1}}$,
by defining the coefficient
\begin{align}
\label{S-ISS}  
  \left\langle \ISS_{s,t}^\S(z), w\right\rangle_{\!\s}
  &\coloneqq \sideset{}{_\s}\bigoplus_{s < j_1 < \dots < j_k< t+1} z_{j_1}^{\odot_\s w_1} \odot_\s \dots \odot_\s z_{j_k}^{\odot_\s w_k}
  ,\quad 0 \le s \le t \le +\infty.
\end{align}
for $w=w_1 \cdots w_k \in A^*$, $w_i \in A$, where
for a letter $w_i =[a_{i_1} \cdots a_{i_m}] \in A$ 
$$
    z_{j}^{\odot_\s w_i}
    \coloneqq
	z_{j}^{\odot_\s [a_{i_1} \cdots a_{i_m}] }
	\coloneqq z^{(a_{i_1})}_{j}\odot_\s \cdots \odot_\s  z^{(a_{i_m})}_{j}. 
$$
We also write $\ISS^\S(z) \coloneqq \ISS_{0,\infty}^\S(z)$.
As an example, we compute
\begin{align*}
	 \left\langle \ISS_{s,t}^\S(z), [\w1][\w2\w3]\right\rangle_{\!\s}
	 &= \sideset{}{_\s}\bigoplus_{s < j_1 < j_2< t+1} z_{j_1}^{\odot_\s [\w1] } \odot_\s z_{j_2}^{\odot_\s [\w2\w3]}
	 =  \sideset{}{_\s}\bigoplus_{s < j_1 < j_2< t+1} z_{j_1}^{(1)} \odot_\s z_{j_2}^{(2)} \odot_\s z_{j_2}^{(3)}.
\end{align*}

Our first results concern the verification that $\ISS^\S$ is a proper iterated-sums signature. By this we mean that it carries the two main properties mentioned in the introduction, i.e., it satisfies Chen's identity and is compatible with the quasi-shuffle product \eqref{Squasishuff}.

\begin{lemma}[Chen's identity]
  \label{lem:chen}
  For $p < r < q$, $w \in \mathbb{S}\langle A\rangle$ and $z \in \mathbb{S}_{\zero_\s}^{d,\N_{\ge 1}}$
  \begin{align}
  \label{eq:chen}
    \left\langle \mathrm{ISS}_{p,q}^\mathbb{S}(z), w \right\rangle_{\!\s}
    =
    \sideset{}{_\s}\bigoplus_{uv = w}
    \left\langle \mathrm{ISS}_{p,r}^\mathbb{S}(z), u \right\rangle_{\!\s} 
    \odot_\s \left\langle \mathrm{ISS}_{r,q}^\mathbb{S}(z), v \right\rangle_{\!\s},
  \end{align}
  or, equivalently, using the non-commutative concatenation product on $\S\llangle A \rrangle$,
  \begin{align}
    \label{lem:chen2}
    \mathrm{ISS}_{p,r}^\S(z)
    \mathrm{ISS}_{r,q}^\S(z)
    =
    \mathrm{ISS}_{p,q}^\S(z).
  \end{align}
\end{lemma}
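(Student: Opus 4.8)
The plan is to prove the coefficientwise identity \eqref{eq:chen} directly from the definition \eqref{S-ISS}, and then observe that \eqref{lem:chen2} is merely the repackaging of that identity in terms of the concatenation product on $\S\llangle A \rrangle$ (since $\langle F_1 F_2, w\rangle_\s = \bigopluss_{uv=w}\langle F_1,u\rangle_\s \odot_\s \langle F_2,v\rangle_\s$ by the definition of the product on formal series).

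First I would fix a word $w = w_1\cdots w_k \in A^*$ and expand the left-hand side of \eqref{eq:chen}: it is the $\opluss$-sum over all strictly increasing index tuples $p < j_1 < \dots < j_k < q+1$ of the terms $z_{j_1}^{\odot_\s w_1}\odot_\s\cdots\odot_\s z_{j_k}^{\odot_\s w_k}$. The key combinatorial step is to partition the index set of such tuples according to how many of the indices fall in the left block $\{p+1,\dots,r\}$ versus the right block $\{r+1,\dots,q\}$: since the tuple is strictly increasing and $r$ sits strictly between $p$ and $q$, there is a unique $m \in \{0,1,\dots,k\}$ with $p < j_1 < \dots < j_m < r+1 \le j_{m+1} < \dots < j_k < q+1$. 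This $m$ corresponds exactly to the splitting $w = uv$ with $u = w_1\cdots w_m$ and $v = w_{m+1}\cdots w_k$. So the index set decomposes as a disjoint union over $m=0,\dots,k$.

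Next I would use associativity and commutativity of $\odot_\s$ to factor each summand $z_{j_1}^{\odot_\s w_1}\odot_\s\cdots\odot_\s z_{j_k}^{\odot_\s w_k}$ as $\big(z_{j_1}^{\odot_\s w_1}\odot_\s\cdots\odot_\s z_{j_m}^{\odot_\s w_m}\big)\odot_\s\big(z_{j_{m+1}}^{\odot_\s w_{m+1}}\odot_\s\cdots\odot_\s z_{j_k}^{\odot_\s w_k}\big)$, and then apply the distributive law \eqref{eq:distributes} (in the form of the general summation-by-parts expansion \eqref{sumbyparts}, i.e. $\big(\bigopluss_i a_i\big)\odot_\s\big(\bigopluss_j b_j\big) = \bigopluss_{i,j} a_i\odot_\s b_j$) to pull the $\opluss$ over the left indices and the $\opluss$ over the right indices out of the product. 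For each fixed $m$, this turns the contribution into $\langle \ISS_{p,r}^\S(z), u\rangle_\s \odot_\s \langle \ISS_{r,q}^\S(z), v\rangle_\s$. Summing over $m = 0,\dots,k$, and noting that this sum is precisely $\bigopluss_{uv=w}$, gives the right-hand side. One subtlety to handle cleanly: the empty splittings $m=0$ and $m=k$, where one factor is $\langle \ISS^\S_{\cdot,\cdot}(z), \emptyWord\rangle_\s = \one_\s$ (the signature's value on the empty word must be the multiplicative unit); I would make sure the convention in \eqref{S-ISS} gives this, so the boundary terms are consistent.

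The main obstacle is not deep but is the bookkeeping: one must be careful that the "unique $m$" argument genuinely partitions the tuples (it does, precisely because the inequalities are strict and $p,r,q$ are integers with $p<r<q$, so no index can equal $r$ in an ambiguous way — the cut is at the half-integer position between $r$ and $r+1$), and that the distributive/summation-by-parts step is applied only after the product has been factored into a part depending solely on left indices and a part depending solely on right indices. Since no diagonal (equal-index) terms arise here — unlike in the quasi-shuffle identity — the semiring axioms used are just associativity, commutativity of $\odot_\s$, and one-sided distributivity; no idempotency or cancellation is needed, so the proof goes through over an arbitrary commutative semiring. Finally, for \eqref{lem:chen2}, iterating or simply reading \eqref{eq:chen} through the definition of concatenation of formal series yields $\ISS_{p,r}^\S(z)\,\ISS_{r,q}^\S(z) = \ISS_{p,q}^\S(z)$ immediately.
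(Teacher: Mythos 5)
Your proposal is correct and follows essentially the same route as the paper's own proof: partitioning the strictly increasing index tuples by the unique cut point relative to $r$, factoring each summand, and applying distributivity to recover the $\opluss_{uv=w}$ sum. The paper's version is terser (it simply lists the $k+1$ blocks of the partition and identifies the result), while you additionally make explicit the distributivity step and the convention $\langle\ISS^\S_{\cdot,\cdot}(z),\emptyWord\rangle_\s=\one_\s$ for the boundary splittings, which is a harmless refinement.
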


\begin{remark}
  \label{rem:chen}
  This allows, for $p < r < q$, to
  calculate $\mathrm{ISS}_{p,q}^\mathbb{S}(z)$ from
  $\mathrm{ISS}_{p,r}^\mathbb{S}(z)$ and $\mathrm{ISS}_{r,q}^\mathbb{S}(z)$.
  However, in contrast to the situation over a ring,
  we cannot calculate
  $\mathrm{ISS}_{r,q}^\mathbb{S}(z)$ from $\mathrm{ISS}_{p,q}^\mathbb{S}(z)$ and
  $\mathrm{ISS}_{p,r}^\mathbb{S}(z)$.
  This is due to the fact that semiring addition, $\oplus_\s$, is not invertible.
  We return to this point in \Cref{sec:algo}.
\end{remark}
\begin{example}
\label{ex:chen2}
One example was already given in \Cref{ex:chen}.
Consider now again the min-plus semiring $\S=\R_\mp$ (here $d=1$ corresponding to the single letter alphabet $A'=\{1\}$).
Then
\begin{align*}
  &\Big\langle \ISS^{\R_\mp}_{p,q}(z), [\w1^7][\w1^1][\w1^4] \Big\rangle  
  =
  \sideset{}{_{\mp}}\bigoplus_{p < i_1 < i_2< i_3 < q+1} z_{i_1}^{\odot_{\mp} [\w1^7]} \odot_{\mp} z_{i_2}^{\odot_{\mp} [\w1^7]} \odot_{\mp} z_{i_3}^{\odot_{\mp} [\w1^4] }\\
  &= \min_{p < i_1 < i_2  < i_3 \le q } \{ 7 z_{i_1} + z_{i_2} + 4 z_{i_3} \}\\
  &=
  \min \Big\{
    \min_{ p < i_1 < i_2 < i_3 \le r}  \{ 7 z_{i_1} + z_{i_2} + 4 z_{i_3} \},
    \min_{ p < i_1 < i_2 \le r}  \{ 7 z_{i_1} + z_{i_2}\}
    +
    \min_{r < i_3 \le q} \{ 4 z_{i_3} \},\\
  &\qquad
    \min_{ p < i_1 \le r}  \{ 7 z_{i_1}\}
    +
    \min_{r < i_2 < i_3 \le q} \{ z_{i_2} + 4 z_{i_3} \},
    \min_{ r < i_1 < i_2 \le q}  \{ 7 z_{i_1} + z_{i_2} + 4 z_{i_3} \}
  \Big\} \\
  &=
  \Big\langle \ISS^{\R_\mp}_{p,r}(z), [\w1^7][\w1^1][\w1^4] \Big\rangle
  \oplus_{{\mp}}
  \left( \Big\langle \ISS^{\R_\mp}_{p,r}(z), [\w1^7][\w1^1] \Big\rangle \odot_\mp \Big\langle \ISS^{\R_\mp}_{p,r}(z), [\w1^4] \Big\rangle \right) \\
  &\qquad
  \oplus_{{\mp}}
  \left( \Big\langle \ISS^{\R_\mp}_{p,r}(z), [\w1^7] \Big\rangle \odot_\mp \Big\langle \ISS^{\R_\mp}_{p,r}(z), [\w1^1][\w1^4] \Big\rangle \right)
  \oplus_{{\mp}}
  \Big\langle \ISS^{\R_\mp}_{p,r}(z), [\w1^7][\w1^1][\w1^4] \Big\rangle
\end{align*}

\end{example}

\begin{proof} We now show \eqref{eq:chen} by a direct calculation
  \begin{align*}
    \left\langle \mathrm{ISS}_{p,q}^\mathbb{S}(z), w \right\rangle
    &= \sideset{}{_\s}\bigoplus_{p < j_1 < j_2 < \cdots < j_k\le q} z_{j_1}^{\odot_\s w_1} \odot_\s \dots \odot_\s z_{j_k}^{\odot_\s w_k}\\
    &=
    \sideset{}{_\s}\bigoplus_{p < r < j_1 < j_2 < \dots < j_k\le q} z_{j_1}^{\odot_\s w_1} \odot_\s \dots \odot_\s z_{j_k}^{\odot_\s w_k}\\
    &\oplus_\s
    \sideset{}{_\s}\bigoplus_{p < j_1 \le r < j_2 <\dots < j_k\le q} z_{j_1}^{\odot_\s w_1} \odot_\s \dots \odot_\s z_{j_k}^{\odot_\s w_k} \\
    &
    \oplus_\s
    \dots
    \\
    &\oplus_\s \sideset{}{_\s}\bigoplus_{p < j_1 < j_2 < \dots < j_{k-1} \le r < j_k\le q} z_{j_1}^{\odot_\s w_1} \odot_\s \dots \odot_\s z_{j_k}^{\odot_\s w_k}\\
    &\oplus_\s
    \sideset{}{_\s}\bigoplus_{p < j_1 < j_2 < \dots < j_{k-1} < j_k \le r < q} z_{j_1}^{\odot_\s w_1} \odot_\s \dots \odot_\s z_{j_k}^{\odot_\s w_k}\\
    &=
    \sideset{}{_\s}\bigoplus_{uv=w}
    \left\langle \mathrm{ISS}_{p,r}^\mathbb{S}(z), u \right\rangle 
    \odot_\s \left\langle \mathrm{ISS}_{r,q}^\mathbb{S}(z), v \right\rangle.
  \end{align*}
\end{proof}

From summation by parts \eqref{sumbyparts} extended to iterated $\mathbb{S}$-sums we deduce the multiplicativity of $\mathrm{ISS}^\mathbb{S}(z)$ over the quasi-shuffle product \eqref{Squasishuff} on $\mathbb{S}\langle A\rangle$. 

\begin{lemma}[Multiplicativity]
  \label{lem:qsIdentity}
For $w,v \in \mathbb{S}\langle A\rangle$ and $z \in \mathbb{S}_{\zero_\s}^{d,\N_{\ge 1}}$
 \begin{align*}
   \left\langle \ISS_{s,t}^\S(z),w \right\rangle \odot_\s \left\langle \ISS_{s,t}^\S(z),v\right\rangle
   =
   \left\langle \ISS_{s,t}^\S(z), w \qs v \right\rangle
  \end{align*}
\end{lemma}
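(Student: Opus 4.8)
The plan is to first reduce to words and then induct on total length. Since the quasi-shuffle product $\qs$ on $\S\langle A\rangle$ is defined by bilinear extension of its values on words, and $\langle\ISS_{s,t}^\S(z),\cdot\rangle$ is an $\S$-linear map $\S\langle A\rangle\to\S$, both sides of the claimed identity are bilinear in $(w,v)$; hence it suffices to prove
\[
  \langle\ISS_{s,t}^\S(z),w\rangle\odot_\s\langle\ISS_{s,t}^\S(z),v\rangle
  =\langle\ISS_{s,t}^\S(z),w\qs v\rangle
\]
for all words $w,v\in A^*$. I would prove this by induction on $\len(w)+\len(v)$, the statement at each level being quantified over all $0\le s\le t\le+\infty$. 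The base case $w=\emptyWord$ (or, symmetrically, $v=\emptyWord$) is immediate from $\langle\ISS_{s,t}^\S(z),\emptyWord\rangle=\one_\s$ (the empty iterated sum) and $\emptyWord\qs v=v$.

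The inductive step rests on the ``peel off the largest index'' identity, obtained from the defining formula \eqref{S-ISS} by isolating the top summation index $m=j_k$ and using that $j_{k-1}<m$ iff $j_{k-1}\le m-1$:
\[
  \langle\ISS_{s,t}^\S(z),ua_i\rangle
  =\bigopluss_{s<m\le t}\langle\ISS_{s,m-1}^\S(z),u\rangle\odot_\s z_m^{\odot_\s a_i},
  \qquad a_i\in A,\ u\in A^*.
\]
Writing $w=ua_i$ and $v=v'a_j$, I would apply this to both factors of the left-hand side and then invoke summation by parts \eqref{sumbyparts} for the product of the two resulting sums over $m$ and $n$; this splits it into three pieces according to $m<n$, $m=n$, and $m>n$.

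In the piece $m<n$ one re-sums over $s<m\le n-1$ to recognize $\langle\ISS_{s,n-1}^\S(z),ua_i\rangle$, applies the inductive hypothesis on the interval $[s,n-1]$ to rewrite $\langle\ISS_{s,n-1}^\S(z),ua_i\rangle\odot_\s\langle\ISS_{s,n-1}^\S(z),v'\rangle$ as $\langle\ISS_{s,n-1}^\S(z),(ua_i)\qs v'\rangle$, and peels the index $n$ back in to arrive at $\langle\ISS_{s,t}^\S(z),\big((ua_i)\qs v'\big)a_j\rangle$. The piece $m>n$ is symmetric and yields $\langle\ISS_{s,t}^\S(z),\big(u\qs(v'a_j)\big)a_i\rangle$. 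In the diagonal piece $m=n$ one uses $z_m^{\odot_\s a_i}\odot_\s z_m^{\odot_\s a_j}=z_m^{\odot_\s[a_i\ a_j]}$ (immediate from the definitions of $z_m^{\odot_\s(\cdot)}$ and of the bracket letter $[a_i\ a_j]$), applies the inductive hypothesis to $u\qs v'$, and peels $m$ back in to obtain $\langle\ISS_{s,t}^\S(z),(u\qs v')[a_i\ a_j]\rangle$. Adding the three pieces with $\oplus_\s$ and comparing with the recursive definition \eqref{Squasishuff} of $(ua_i)\qs(v'a_j)$ — once more via $\S$-linearity of the pairing — completes the induction.

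The main obstacle is the combinatorial bookkeeping in the three-way split: one must ensure the ``inner'' summation index of each factor is strictly below the peeled-off outer index, so that re-summation genuinely reconstitutes a lower-depth iterated-sums signature over the shortened interval $[s,m-1]$ (resp.\ $[s,n-1]$) to which the inductive hypothesis applies. One should also check that everything remains well-defined for $t=+\infty$ and over the degenerate interval $[s,s]$ (where $\langle\ISS_{s,s}^\S(z),\cdot\rangle$ is $\one_\s$ on $\emptyWord$ and $\zero_\s$ otherwise), which holds because $z$ is eventually $\zero_\s^d$ and every letter of $A$ involves at least one coordinate of $z_m$, so all the sums appearing have finite support; the remaining ingredients — commutativity of $\odot_\s$ and distributivity \eqref{eq:distributes} — are routine.
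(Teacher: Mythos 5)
Your proof is correct and follows essentially the same route as the paper's: reduce to words by linearity, induct on $\len(w)+\len(v)$, peel off the last letter of each word (the paper states this as \eqref{eq:halfShuffle}), apply summation by parts \eqref{sumbyparts} to split into the three pieces $m<n$, $m>n$, $m=n$, invoke the inductive hypothesis on the shorter words over the truncated interval, and reassemble via the recursion \eqref{Squasishuff}. The only cosmetic difference is that the paper names the two peeled increments $f_i, g_i$ before applying summation by parts, whereas you phrase it directly as a double sum over $m,n$; the underlying computation is identical.
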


\begin{example}
\label{ex:qs2}
\Cref{ex:qs} in the current notation reads as
\begin{align*}
  &\Big\langle \ISS^{\R_\mp}(z), [\w1^1] \Big\rangle
  \odot_\s
  \Big\langle \ISS^{\R_\mp}(z), [\w1^7][\w1^4] \Big\rangle \\
  &\quad=
  \Big\langle \ISS^{\R_\mp}(z), [\w1^1][\w1^7][\w1^4] \Big\rangle
  \oplus_\s
  \Big\langle \ISS^{\R_\mp}(z), [\w1^8][\w1^4] \Big\rangle
  \oplus_\s
  \Big\langle \ISS^{\R_\mp}(z), [\w1^7][\w1^1][\w1^4] \Big\rangle\\
  &\qquad\qquad
  \oplus_\s
  \Big\langle \ISS^{\R_\mp}(z), [\w1^7][\w1^5] \Big\rangle
  \oplus_\s
  \Big\langle \ISS^{\R_\mp}(z), [\w1^7][\w1^4][\w1^1] \Big\rangle.
\end{align*}
\end{example}

\begin{proof}
  We first note that for a word $x \in A^*$ and a letter $a \in A$,
  \begin{align}
    \label{eq:halfShuffle}
    \bigopluss_{s < i < j} \left\langle \ISS_{s,i-1}^{\S}(z), x \right\rangle \odots z^{\odot_\s a}_{i} = \left\langle \ISS_{s,j-1}^{\S}(z), x a \right\rangle,
  \end{align}
  which follows directly from the definition of $\TSS$.

  We prove the Lemma by induction on the sum $q = \len(w)+\len(v)$ of the lengths of the words.
  It is trivially true for $q = 0,1$.
  Suppose that \(q\ge 2\).  Let it be true up to arbitrary $q-1$ and assume $\len(w)+\len(v) = q$.
  Without loss of generality, we assume that both \(w=w_1\cdots w_\ell\) and \(v=v_1\cdots v_k\) are nonempty since otherwise there is nothing to prove.
  For $i=s+1,\dots,t$,
define
  \begin{align*}
    f_i &\coloneqq \left\langle \ISS_{s,i-1}^{\S}(z), v_1 \cdots v_{k-1} \right\rangle \odots z^{\odot_\s v_k}_{i} \\
    g_i &\coloneqq \left\langle \ISS_{s,i-1}^{\S}(z), w_1 \cdots w_{\ell-1} \right\rangle \odots z^{\odot_\s w_\ell}_{i}.
  \end{align*}
  Note that for a fixed $j$ one has
  \begin{align*}
    \bigopluss_{s < i < j}
    f_i
    =
    \left\langle \ISS_{s,j-1}^{\S}(z), v \right\rangle \qquad
    \bigopluss_{s < i < j}
    g_i
    =
    \left\langle \ISS_{s,j-1}^{\S}(z), w \right\rangle.
  \end{align*}
  
  Then
  \begin{align}
    \label{eq:threeTerms}
    &\left\langle \ISS_{s,t}^{\S}(z), v \right\rangle
    \odots
    \left\langle \ISS_{s,t}^{\S}(z), w \right\rangle 
    =
    \left( \bigopluss_{s < i < t + 1} f_i \right)
    \odots
    \left( \bigopluss_{s < j < t + 1} g_i \right) \notag\\
    &=
    \left( \bigopluss_{s < i < j < t + 1} f_i \odots g_j \right)
    \opluss
    \left( \bigopluss_{s < j < i < t + 1} f_i \odots g_j \right)
    \opluss
    \left( \bigopluss_{s < i < t + 1} f_i \odots g_i \right),
  \end{align}
  where the last equality follows from summation-by-parts, \eqref{sumbyparts}.
  Now, the first term in the last equality is equal to
  \begin{align*}
    &\bigopluss_{s < i < j < t+1}
    f_i
    \odots \left\langle 
    \ISS_{s,j-1}^{\S}(z), w_1 \cdots w_{\ell-1} \right\rangle \odots z^{\odot_\s w_\ell}_{j} \\
    &=\bigopluss_{s < j < t+1} \left\langle \ISS_{s,j-1}^{\S}(z), v \right\rangle \odots \left\langle 
    \ISS_{s,j-1}^{\S}(z), w_1 \cdots w_{\ell-1} \right\rangle \odots z^{\odot_\s w_\ell}_{j} \\
    &=
    \bigopluss_{s < j < t+1} \left\langle \ISS_{s,j-1}^{\S}(z), v \qs (w_1 \cdots w_{\ell-1}) \right\rangle \odots z^{\odot_\s w_\ell}_{j} \\
    &=
    \left\langle \ISS_{s,t}^{\S}(z), ( v \qs (w_1 \cdots w_{\ell-1}) ) w_\ell \right\rangle,
  \end{align*}
  where we used \eqref{eq:halfShuffle}
  and the induction hypothesis (since $\len(v)+\len(w_1\cdots w_{\ell-1}) = q-1$).
  Analogously, for the second term, we obtain
  \begin{align*}
    \left\langle \ISS_{s,t}^{\S}(z), (v_1 \dots v_{k-1}) \qs w \right\rangle.
  \end{align*}
  The last term is equal to
  \begin{align*}
    &\bigopluss_{s < i < t+1} \left\langle \ISS_{s,i-1}^{\S}(z), v_1 \cdots v_{k-1} \right\rangle \odots z^{\odot_\s v_k}_{i}  \odots \left\langle \ISS_{s,i-1}^{\S}(z), w_1 \cdots w_{\ell-1} \right\rangle \odots z^{\odot_\s w_\ell}_{i} \\
    &=\bigopluss_{s < i < t+1} \left\langle \ISS_{s,i-1}^{\S}(z), (v_1 \cdots v_{k-1})\qs (w_1 \cdots w_{\ell-1}) \right\rangle \odots z^{\odot_\s v_k}_{i}  \odots z^{\odot_\s w_\ell}_{i} \\
    &=\bigopluss_{s < i < t+1} \left\langle \ISS_{s,i-1}^{\S}(z), (v_1 \cdots v_{k-1})\qs (w_1 \cdots w_{\ell-1}) \right\rangle \odots z^{\odot_\s [v_k\ w_\ell]}_{i} \\
    &=\left\langle \ISS_{s,t}^{\S}(z), \bigl((v_1 \cdots v_{k-1})\qs (w_1 \cdots w_{\ell-1})\bigr) [v_k\ w_\ell]\right\rangle.
  \end{align*}
  where we used \eqref{eq:halfShuffle}
  and the induction hypothesis.
  
  Combining those terms, we get, via \eqref{Squasishuff}, that
  \begin{align*}
    &\left\langle \ISS_{s,t}^{\S}(z), v \right\rangle
    \odots
    \left\langle \ISS_{s,t}^{\S}(z), w \right\rangle \\
    &=
    \left\langle \ISS_{s,t}^{\S}(z), ( v \qs (w_1 \cdots w_{\ell-1}) ) w_\ell + ((v_1 \cdots v_{k-1})\qs w) v_k \right. \\
    &\qquad 
     +\bigl( (v_1 \cdots v_{k-1} v_{k-1})\qs (w_1 \cdots w_{\ell-1}) \bigr)[v_k w_\ell] \Big\rangle \\
    &=
    \left\langle \ISS_{s,t}^{\S}(z), v \qs w \right\rangle.
  \end{align*}
\end{proof}

\section{Quasisymmetric expressions over a semiring}
\label{sec:qsym}

The aim of this section is to study the coefficients, i.e. the iterated sums used to define the $\TSS$ in \eqref{S-ISS}, as formal power series expressions.%
\footnote{We speak of ``quasisymmetric expressions'' and not of ``quasisymmetric functions'',
since, over a semiring, polynomial expressions cannot be identified with polynomial functions \cite{CK2016}.}
Analogous to the classical case, this results in the definition of the notion of quasisymmetric expressions defined over a semiring.
These are formal series with coefficients in $\mathbb{S}$ which have a particular symmetry property defined below.
When considered over a commutative ring, their siblings form the well-studied Hopf algebra of quasisymmetric
functions with the monomial quasisymmetric functions as one of many bases \cite{Ges1984,luoto2013introduction,MR1995}. As we shall see, working over a semiring leads to rather minor changes compared to the classical theory of quasisymmetric functions.
This stems from the fact that most properties only rely on the \emph{index set} (i.e., the totally ordered set of integers). However, it turns out that the monomial basis is the only reasonable one, see  \Cref{rem:monomialBasis}.

In the following, we denote by 
\begin{align*}
  \S \lbracket X_1, X_2, X_3, \dots\rbracket
\end{align*}
the commutative $\S$-semialgebra of \textbf{formal power series expressions} in commuting ordered indeterminates
$X \coloneqq \{X_1,X_2,X_3,\ldots \}$ with coefficients in $\mathbb{S}$.  We write monomials in these variables in the usual form
\begin{align*}
  X_{s_1}^{\alpha_1} \cdots X_{s_n}^{\alpha_n}, \quad n \ge 0,\quad \alpha_1, \dots, \alpha_n \ge 1,
\end{align*}
but note that this is -- of course -- \emph{just a formal expression},
so that we might as well have written $X_{s_1}^{\odot_\s \alpha_1} \odot_\s \dots \odot_s X_{s_n}^{\odot_\s \alpha_n}$.%
Which, although more consistent with previous notation, we abstain from to reduce notational clutter.
The \textbf{degree} of such a monomial is defined by the sum of the exponents, $|X_{s_1}^{\alpha_1} \cdot \dots \cdot X_{s_n}^{\alpha_n}| \coloneqq \alpha_1 + \dots + \alpha_n$.
Similar to the power series semialgebra in noncommuting variables of the previous section,
elements $P \in \S \lbracket X_1, X_2, X_3, \dots\rbracket $
can be considered as formal power series
\begin{align*}
  P = \sum_{m} c_m m,
\end{align*}
where $c_m \in \S$ and the sum is over formal commutative monomials in the indeterminates  $X_1,X_2,\dots$. The linear structure follows as for the case of noncommutative variables and the multiplicative structure is induced from the product of formal monomials (Cauchy product).
We shall write $P(m) \coloneqq c_m$.
By small abuse of notation, we let $\S \lbracket X_1, X_2, X_3, \dots\rbracket $ contain \textbf{only power series of bounded degree}, i.e., for $P \in \S \lbracket X_1, X_2, X_3, \dots\rbracket $ there is $N \ge 0$ such that for all monomials $m$ with $|m| \ge N$, $P(m) = \zero_\s$.
The subset with power series of finite support is denoted $\S[X_1,X_2,\dots]$ and forms the space of \textbf{formal polynomial expressions}.

\begin{definition}
  An element $P \in \mathbb{S} \lbracket X_1, X_2, X_3, \dots\rbracket $
  is a \textbf{quasisymmetric expression}
  if for all $n \ge 1$, $\alpha_1, \ldots, \alpha_n \ge 1$,
  $0 < s_1 < \dots < s_n$ and
  $0 < t_1 < \dots < t_n$
  the coefficients
  of the monomials
  \begin{align*}
    X_{s_1}^{\alpha_1} \cdot \dots \cdot X_{s_n}^{\alpha_n}\, \text{ and }\,
    X_{t_1}^{\alpha_1} \cdot \dots \cdot X_{t_n}^{\alpha_n},
  \end{align*}
  coincide.

  Define the \textbf{monomial quasisymmetric expression}
  indexed by $\alpha = (\alpha_1, \dots, \alpha_k) \in \N_{\ge 1}^k$, $k \ge 0$
  as
  \begin{align*}
    M_{\alpha} \coloneqq
    \sum_{1 \le t_1 < \dots < t_k < +\infty} X_{t_1}^{\alpha_1} \cdot \dots \cdot X_{t_k}^{\alpha_k},
  \end{align*}
  with the convention $M_{()} \coloneqq \one_\s$, the constant quasisymmetric expression.
\end{definition}

\begin{lemma}
  The space of all quasisymmetric expressions
  is a sub-semialgebra of the commutative $\S$-semialgebra $\mathbb{S}\lbracket X_1, X_2, \dots\rbracket $.
  We denote it by $\mathrm{QSym}_\s$.
\end{lemma}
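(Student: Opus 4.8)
The plan is to show that $\mathrm{QSym}_\s$ is closed under the $\S$-semimodule operations (addition and scalar multiplication) and under the Cauchy product, and contains the multiplicative unit $\one_\s = M_{()}$; these three facts together establish that it is a sub-semialgebra. Closure under addition and scalar multiplication is immediate: if $P$ and $Q$ both have the property that the coefficients of $X_{s_1}^{\alpha_1}\cdots X_{s_n}^{\alpha_n}$ and $X_{t_1}^{\alpha_1}\cdots X_{t_n}^{\alpha_n}$ agree for all strictly increasing index tuples, then the coefficients of $P+Q$ and of $sP$ inherit this, since both operations are defined coefficientwise. The unit $\one_\s$ is just the monomial quasisymmetric expression $M_{()}$, which is trivially quasisymmetric (its only nonzero coefficient is on the empty monomial, for which the condition is vacuous). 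So the only real content is multiplicativity.

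For the product, the key step is to reformulate the quasisymmetry condition intrinsically: $P$ is quasisymmetric if and only if, for every \emph{composition} $\alpha=(\alpha_1,\dots,\alpha_n)$, there is a well-defined coefficient $c_\alpha \in \S$ such that $P(X_{s_1}^{\alpha_1}\cdots X_{s_n}^{\alpha_n}) = c_\alpha$ for \emph{all} strictly increasing $0<s_1<\dots<s_n$; equivalently, $P = \sideset{}{_\s}\bigoplus_{\alpha} c_\alpha M_\alpha$ (a possibly infinite but degree-bounded sum, which is well-defined since each monomial lies in the support of exactly one $M_\alpha$). Given this, it suffices to show that $M_\alpha \odot_\s M_\beta$ is again a (finite) $\S$-linear combination of monomial quasisymmetric expressions. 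But this is exactly the quasi-shuffle phenomenon already established in the paper: $M_\alpha$ is, up to a relabelling of letters, the image under $\ISS^\S$ of a suitable one-dimensional time series, and $M_\alpha \odot_\s M_\beta$ expands as $M_{\alpha \qs \beta}$ in the sense of \Cref{lem:qsIdentity} (with the bracket on the alphabet realized as addition of exponents). Concretely, one carries out the summation-by-parts computation \eqref{sumbyparts} directly on the index sets: writing $M_\alpha M_\beta = \sum_{s_1<\dots<s_n,\ t_1<\dots<t_m} X^\alpha_{s_\bullet} X^\beta_{t_\bullet}$ and splitting the product of index sets according to the relative order of the $s$'s and $t$'s (including ties, which merge two exponents into their sum), one obtains a finite sum $\sideset{}{_\s}\bigoplus_\gamma M_\gamma$ over the compositions $\gamma$ appearing in the quasi-shuffle $\alpha \qs \beta$.

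The main obstacle is bookkeeping rather than conceptual: one must verify that the decomposition of $\{s_1<\dots<s_n\}\times\{t_1<\dots<t_m\}$ by order type is exhaustive and that each resulting piece is precisely an index set of the form $\{1\le u_1<\dots<u_k<\infty\}$, so that it contributes a genuine $M_\gamma$; and one must check that the degree bound is preserved (it is, since $|\gamma| = |\alpha|+|\beta|$ for every $\gamma$ in the quasi-shuffle, which also bounds the number of terms). A minor point to handle carefully is that $\S$ need not be idempotent, so ties must be collected with the correct (single) multiplicity — but since distinct order types give monomials in disjoint supports, no spurious coefficient doubling occurs, and the computation goes through over an arbitrary commutative semiring. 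Finally, closure of $\mathrm{QSym}_\s$ under the semimodule structure inherited from $\S\lbracket X_1,X_2,\dots\rbracket$ makes it a sub-semimodule, and closure under the Cauchy product together with containing $\one_\s$ upgrades this to a sub-semialgebra, as claimed.
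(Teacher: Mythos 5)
Your proof is correct in outline but takes a genuinely different route from the paper, and it has one invalid shortcut and one incorrect side-remark worth flagging.

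The paper's proof is a one-step coefficient computation: it expands the Cauchy product $(QP)(X_{s_1}^{\alpha_1}\cdots X_{s_n}^{\alpha_n})$ into a sum of products $Q(\cdot)\odot_\s P(\cdot)$ of coefficients of sub-monomials, and observes that quasisymmetry of $Q$ and of $P$ lets one replace each $s_i$ by $t_i$ term by term, so the whole sum is unchanged. No basis result and no quasi-shuffle structure is invoked. Your route instead proves (or borrows) the stronger structural fact that every $P\in\mathrm{QSym}_\s$ equals a finite sum $\bigoplus_\alpha c_\alpha M_\alpha$, then reduces to showing that $M_\alpha M_\beta$ is a finite $\S$-linear combination of $M_\gamma$'s. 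This is a legitimate alternative: what it buys is the explicit product rule (the quasi-shuffle expansion of $M_\alpha M_\beta$), which is more information than mere closure; what it costs is that you are implicitly proving the paper's subsequent ``monomial basis'' proposition inside this lemma. There is no circularity, since that proposition does not rely on the present lemma, but the paper's ordering is cleaner.

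Two concrete points to fix. First, the appeal to \Cref{lem:qsIdentity} via ``$M_\alpha$ is the image under $\ISS^\S$ of a suitable time series'' is not a valid justification for the identity $M_\alpha M_\beta = \bigoplus_\gamma c_\gamma M_\gamma$. Knowing that both sides evaluate to the same element of $\S$ on every time series does \emph{not} imply they are equal as formal power series, precisely because, as the paper stresses in the footnote opening \Cref{sec:qsym}, polynomial expressions over a semiring are not determined by the functions they induce. The direct index-set computation you also sketch is the valid argument, and it must stand on its own. Second, the remark that ``distinct order types give monomials in disjoint supports, so no spurious coefficient doubling occurs'' is false: already $M_{(1)}\odot_\s M_{(1)} = (\one_\s\oplus_\s\one_\s)\,M_{(1,1)} + M_{(2)}$, since the two order types $s<t$ and $t<s$ both produce the support of $M_{(1,1)}$. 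This does not damage the proof — finite sums of $\one_\s$ are perfectly good semiring coefficients and no idempotency is needed — but the stated reason is wrong, and one should simply say that the coefficient of $M_\gamma$ is the (finite) number, taken in $\S$, of $(\alpha,\beta)$-quasi-shuffles producing $\gamma$.
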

\begin{proof}
  Let $Q,P$ be quasisymmetric expressions,
  let $n\ge 1$ and
  $\alpha_1, \ldots, \alpha_n \ge 1$,
  $0 < s_1 < \dots < s_n$ and
  $0 < t_1 < \dots < t_n$.
  Then, since the product of power series expressions is the Cauchy product,
  \begin{align*}
    (Q P)(X_{s_1}^{\alpha_1} \cdot \dots \cdot X_{s_n}^{\alpha_n})
    &=
    \bigopluss_{k=0}^n
    \bigopluss_{ 1 \le i_1 < \dots < i_k \le n}
      Q( X_{s_{i_1}}^{\alpha_{i_1}} \cdot \dots \cdot X_{s_{i_k}}^{\alpha_{i_k}} )
      P( X_{s_{j_1}}^{\alpha_{j_1}} \cdot \dots \cdot X_{s_{j_{n-k}}}^{\alpha_{j_{n-k}}} ),
  \end{align*}
  where the inner sum is over all $I = \{i_1 < \dots < i_k \} \subset [n]$
  and $\{j_1 < \dots < j_{n-k}\} \coloneqq [n] \setminus I $ is the complement in $[n]$.
  Since both $Q$ and $P$ are quasisymmetric, this sum is equal to
  \begin{align*}
    &\bigopluss_{k=0}^n
    \bigopluss_{ 1 \le i_1 < \dots < i_k \le n}
      Q( X_{t_{i_1}}^{\alpha_{i_1}} \cdot \dots \cdot X_{t_{i_k}}^{\alpha_{i_k}} )
      P( X_{t_{j_1}}^{\alpha_{j_1}} \cdot \dots \cdot X_{t_{j_{n-k}}}^{\alpha_{j_{n-k}}} ) \\
    &\quad=
    (Q P)(X_{t_1}^{\alpha_1} \cdot \dots \cdot X_{t_n}^{\alpha_n}).
  \end{align*}
  Hence, the space of quasisymmetric expressions is indeed closed under multiplication.
\end{proof}

\begin{remark}
  \begin{enumerate}
    \item 
      We can naturally evaluate a formal monomial at values in a commutative semiring, e.g.~for semiring elements $z_1,z_2 \in \S$,
      \begin{align*}
        X_1^3 X_2^5\evaluatedAt{X_1 = z_1, X_2 = z_2} = z_1^{\odot_\s 3} \odot_\s z_2^{\odot_\s 5}.
      \end{align*}
    The iterated sums in the definition of the $\TSS$, \eqref{S-ISS}, then
    amount, in the one-dimensional case, to evaluation of specific monomial quasisymmetric expression,
    \begin{align*}
      \left\langle \ISS^\S(z), [\w1^{\alpha_1}] \cdots [\w1^{\alpha_k}] \right\rangle_\s = M_\alpha\evaluatedAt{X_1 = z_1, X_2 = z_2, \dots}.
    \end{align*}

  \item
    See \cite[Remark 3.5]{DET2020} for a straightforward extension to ``multidimensional'' quasisymmetric expressions.

  \end{enumerate}
\end{remark}

\begin{example}
  The simplest, non-trivial quasisymmetric expression
  is
  \begin{align*}
    M_{(1)}(m) \coloneqq
    \begin{cases}
        \one_\s & \qquad \text{ if } m = X_i \qquad \text{ for some $i$ } \\
        \zero_\s & \qquad \text{ else }
    \end{cases},
  \end{align*}
  or, written as formal sum,
  \begin{align*}
    M_{(1)}
    =
    \sum_{0 < i < +\infty} X_{i}
    =
    \sum_{0 < i < +\infty} \one_\s X_{i}.
  \end{align*}

  Another example is given by
  \begin{align*}
    M_{(1,2)}=\sum_{0 < i_1 < i_2 < +\infty} X_{i_1} X_{i_2}^{2}.
  \end{align*}
\end{example}

\begin{definition}
  We say that a family $v_i, i \in I$, of vectors in a $\S$-semimodule $M$
  is a \textbf{basis} if
  for every $w \in M$ there exist
  unique coefficients $\lambda_i \in \S, i \in I$, all but finitely
  many equal to zero, with
  \begin{align}
    \label{eq:basis}
    \sum_{i \in I} \lambda_i v_i = w.
  \end{align}
\end{definition}

\begin{remark}
  There exist different concepts of linear independence in semimodules. The reader is referred to \cite{AGG2009} for an overview.%
  \footnote{We thank the anonymous referee in pointing out an error in an earlier version on this subtle topic.}
\end{remark}

We then have, as expected, that the monomial quasisymmetric expressions are a basis for $\mathrm{QSym}_\s$.
\begin{proposition}
  The family $M_\alpha$ of monomial quasisymmetric expressions a basis for
  $\mathrm{QSym}_\s$.
\end{proposition}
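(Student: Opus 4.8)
The plan is to show two things: that the family $\{M_\alpha\}$ spans $\mathrm{QSym}_\s$ in the sense of \eqref{eq:basis}, and that the representation is unique. Both follow by inspecting coefficients of monomials, exactly as in the classical theory, the only subtlety being that we must work with the semiring-valued coefficients directly since subtraction is unavailable.

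First I would establish spanning. Let $P \in \mathrm{QSym}_\s$. By the bounded-degree assumption there is $N$ with $P(m) = \zero_\s$ whenever $|m| \ge N$, so only finitely many compositions $\alpha$ are relevant. For each composition $\alpha = (\alpha_1,\dots,\alpha_k)$ with $|\alpha| < N$, set $\lambda_\alpha \coloneqq P(X_1^{\alpha_1}\cdots X_k^{\alpha_k})$, the coefficient of the ``standard'' monomial with indices $1 < 2 < \dots < k$. I claim $P = \sum_\alpha \lambda_\alpha M_\alpha$. To verify this, fix an arbitrary monomial $m = X_{t_1}^{\beta_1}\cdots X_{t_n}^{\beta_n}$ with $0 < t_1 < \dots < t_n$ and all $\beta_i \ge 1$. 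On the one hand, $m$ appears in $M_\alpha$ (with coefficient $\one_\s$) precisely when $\alpha = (\beta_1,\dots,\beta_n)$, and in no $M_\alpha$ otherwise, so $\bigl(\sum_\alpha \lambda_\alpha M_\alpha\bigr)(m) = \lambda_{(\beta_1,\dots,\beta_n)} = P(X_1^{\beta_1}\cdots X_n^{\beta_n})$. On the other hand, since $P$ is quasisymmetric and $0 < 1 < \dots < n$ while $0 < t_1 < \dots < t_n$, the coefficients of $X_1^{\beta_1}\cdots X_n^{\beta_n}$ and of $m = X_{t_1}^{\beta_1}\cdots X_{t_n}^{\beta_n}$ in $P$ agree; hence $P(m) = P(X_1^{\beta_1}\cdots X_n^{\beta_n})$. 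The two sides match on every monomial, so the identity holds. Only finitely many $\lambda_\alpha$ are nonzero because $P$ has bounded degree, so this is a legitimate expansion.

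Next, uniqueness. Suppose $\sum_\alpha \lambda_\alpha M_\alpha = \sum_\alpha \mu_\alpha M_\alpha$ with both families finitely supported. Evaluating the coefficient of the standard monomial $X_1^{\alpha_1}\cdots X_k^{\alpha_k}$ for a fixed composition $\alpha = (\alpha_1,\dots,\alpha_k)$: as noted above, this monomial occurs in $M_\beta$ only for $\beta = \alpha$, so the left side contributes $\lambda_\alpha$ and the right side $\mu_\alpha$, giving $\lambda_\alpha = \mu_\alpha$. Since $\alpha$ was arbitrary, the coefficient families coincide. Together with the spanning argument, this shows every element of $\mathrm{QSym}_\s$ has a unique expansion in the $M_\alpha$, i.e.\ they form a basis.

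The only place that requires care — and the ``main obstacle'' such as it is — is making sure no cancellation is being used implicitly: in the classical proof one often subtracts off leading terms and induces on a monomial order, but here we cannot subtract. The argument above sidesteps this entirely because each monomial $m$ determines a unique composition $\alpha$ such that $m$ occurs (with coefficient $\one_\s$) in $M_\alpha$ and in no other $M_\beta$; thus reading off coefficients is already ``triangular'' in the strongest possible sense (it is diagonal on standard monomials), and both spanning and uniqueness reduce to the single observation that the coefficient of $X_1^{\alpha_1}\cdots X_k^{\alpha_k}$ in $M_\beta$ is $\delta_{\alpha\beta}\,\one_\s$. I would also remark that the bounded-degree hypothesis on $\mathrm{QSym}_\s$ is exactly what guarantees the expansion is a finite $\S$-linear combination, as demanded by \Cref{eq:basis}.
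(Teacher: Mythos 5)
Your proof is correct and rests on the same key facts the paper uses — each monomial lies in the support of exactly one $M_\alpha$ (so the $M_\alpha$ have pairwise disjoint supports), quasisymmetry reduces a coefficient to that of the corresponding standard monomial $X_1^{\alpha_1}\cdots X_k^{\alpha_k}$, and the bounded-degree assumption supplies finiteness. The paper's argument is iterative, peeling off one $cM_\alpha$ at a time (implicitly constructing the remainder $Q'$ by zeroing out the type-$\alpha$ coefficients rather than subtracting), whereas you instead write the full expansion $\sum_\alpha \lambda_\alpha M_\alpha$ with $\lambda_\alpha \coloneqq P(X_1^{\alpha_1}\cdots X_k^{\alpha_k})$ and verify it coefficientwise in one pass; that direct verification is a mild clarity gain over the paper's phrasing, which is somewhat terse about why $Q'$ exists and remains quasisymmetric in the absence of subtraction.
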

\begin{proof}
  To show that every element of $\mathrm{QSym}_\s$ can be written
  as a linear combination of monomial quasisymmetric expressions,
  let $Q \in \mathrm{QSym}_\s$.
  If $Q$ is the zero power series, we are done.
  Otherwise,
  take a monomial $m = X_{t_1}^{\alpha_1} \cdot \dots \cdot X_{t_k}^{\alpha_k}$ in $Q$ with non-zero coefficient $c \in \S$
  (i.e., considering $Q$ as a function on monomials, $Q(m)=c$).
  Then
  \begin{align*}
    Q = Q' + c M_{\alpha}
  \end{align*}
  with $Q' \in \mathrm{QSym}_\s$.
  Since $Q$ has finite degree we can repeat this finitely many times to
  see that $Q$ is a linear combination of monomial quasisymmetric expressions.

  The uniqueness of coefficients follows from
  the fact that for $\alpha\not=\beta$ the supports of
  $M_\alpha$ and $M_\beta$ are disjoint.
\end{proof}

\begin{remark}
  \label{rem:monomialBasis}
  The space of quasisymmetric functions over a commutative ring
  has several other relevant linear bases, for example, the \emph{fundamental basis},
	\cite[eq. (2.13)]{MR1995}. Regarding the latter, in the semiring setting, the corresponding fundamental quasisymmetric expressions, $F_\beta \in \mathrm{QSym}_\s$, can be defined as linear combinations of monomial quasisymmetric expressions 
  $$
    F_\alpha = \sum_{\beta \ge \alpha} M_\beta,
  $$
  where $\le$ is the following ``refinement'' partial order:
  \begin{align*}
    (\beta_1,\dotsc,\beta_m)\le(\alpha_1,\dotsc,\alpha_n)
  \end{align*}
  if and only if there is a sequence of integers \(k_1,\dotsc,k_m\) with \(k_j\ge 1\) and \(k_1+\dotsb+k_m=n\), and such that \(\beta_i=\alpha_{j}+\alpha_{j+1}+\dotsb+\alpha_{j+k_i-1}\) for all \(i=1,\dotsc,m\).
  For example, \( (4,4)\le(3,1,2,2) \).

	It turns out that those expressions are closed under multiplication, i.e., they form a sub-semialgebra of $\mathrm{QSym}_\s$ .
    This subalgebra is strictly smaller (and might be interesting to investigate further).
    Indeed, the classical inversion formula (between the $M_\alpha$ and $F_\beta$) implies, for instance, that $M_{(2)}$ can not be expressed in terms of fundamental quasisymmetric expressions in the semiring setting (having no minus operation available)\footnote{We thank Darij Grinberg for pointing this out to us.}.   

    In fact, more holds, as the following statement shows.
    \begin{lemma}
      Let $\S$ be a semiring without additive inverses,
      that is $a \oplus b = \zero$ implies $a = b = \zero$,
      and zero-divisor free.
      Then: the monomial basis is the \emph{only} basis of $\mathrm{QSym}_\s$.
    \end{lemma}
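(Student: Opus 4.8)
The plan is to show that every basis $\{v_j\}_{j\in J}$ of $\mathrm{QSym}_\s$ is, after reindexing, a family $\{u_\alpha M_\alpha\}$ in which each $u_\alpha\in\mathbb{S}$ is a unit; this is the precise sense in which the monomial basis is the ``only'' one, and it becomes literal uniqueness exactly when $\mathbb{S}$ has no unit besides $\one_\s$ (as for $\mathbb{S}=\mathbb{N}$ or $\mathbb{S}=\mathbb{B}$). The hypotheses enter only in two elementary forms: (i) a \emph{finite} sum of semiring elements equal to $\zero_\s$ has every summand equal to $\zero_\s$ (the two-summand case is the assumption, extended to finitely many summands by an obvious induction), and (ii) $a\odot_\s b=\zero_\s$ forces $a=\zero_\s$ or $b=\zero_\s$. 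One preliminary remark: no $v_j$ can be $\zero_\s$, since otherwise $\zero_\s\in\mathrm{QSym}_\s$ would admit both the all-zero expansion and the expansion with coefficient $\one_\s$ in slot $j$, contradicting uniqueness (the trivial semiring, $\one_\s=\zero_\s$, is tacitly excluded).

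First I would set up the two change-of-basis families: write $M_\alpha=\sum_j c_{\alpha j}v_j$ and $v_j=\sum_\beta d_{j\beta}M_\beta$, all sums finite. Substituting the second expansion into the first and invoking uniqueness of the monomial expansion gives $\sum_j c_{\alpha j}d_{j\beta}=\delta_{\alpha\beta}$ (meaning $\one_\s$ if $\alpha=\beta$ and $\zero_\s$ otherwise); substituting the first into the second and invoking uniqueness of the $v$-expansion gives $\sum_\alpha d_{j\alpha}c_{\alpha j'}=\delta_{jj'}$. These two ``matrix'' relations are all that is needed.

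The core of the argument is then short. Fix a composition $\alpha$. For $\beta\neq\alpha$ the identity $\sum_j c_{\alpha j}d_{j\beta}=\zero_\s$ forces, by (i) and then (ii), that $c_{\alpha j}=\zero_\s$ or $d_{j\beta}=\zero_\s$ for each $j$; hence whenever $c_{\alpha j}\neq\zero_\s$ we have $d_{j\beta}=\zero_\s$ for all $\beta\neq\alpha$, i.e. $v_j=d_{j\alpha}M_\alpha$ is a (nonzero) scalar multiple of $M_\alpha$. Moreover there is \emph{exactly one} such $j$: at least one, since $M_\alpha=\sum_j c_{\alpha j}v_j\neq\zero_\s$; and if $j_1\neq j_2$ both satisfied $c_{\alpha j_1},c_{\alpha j_2}\neq\zero_\s$, then $d_{j_2\alpha}v_{j_1}=d_{j_1\alpha}d_{j_2\alpha}M_\alpha=d_{j_1\alpha}v_{j_2}$ would be one element of $\mathrm{QSym}_\s$ with two distinct $v$-expansions (its $v_{j_1}$-coefficient being $d_{j_2\alpha}\neq\zero_\s$ on one side and $\zero_\s$ on the other). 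Denoting this unique index $\phi(\alpha)$, from $M_\alpha=c_{\alpha\phi(\alpha)}v_{\phi(\alpha)}=c_{\alpha\phi(\alpha)}d_{\phi(\alpha)\alpha}M_\alpha$ and comparing monomial coefficients we get $c_{\alpha\phi(\alpha)}d_{\phi(\alpha)\alpha}=\one_\s$, so $u_\alpha\coloneqq d_{\phi(\alpha)\alpha}$ is a unit and $v_{\phi(\alpha)}=u_\alpha M_\alpha$.

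Finally I would check that $\phi$ is a bijection from the set of compositions onto $J$. Injectivity uses that the $M_\alpha$ have pairwise disjoint supports (as already observed): a nonzero element cannot be a unit multiple of both $M_{\alpha_1}$ and $M_{\alpha_2}$ unless $\alpha_1=\alpha_2$. Surjectivity: for any $j_0\in J$, the relation $\sum_\alpha d_{j_0\alpha}c_{\alpha j_0}=\one_\s\neq\zero_\s$ together with (i) yields some $c_{\alpha_0 j_0}\neq\zero_\s$, whence $j_0=\phi(\alpha_0)$ by the uniqueness just established. Thus $\{v_j\}_{j\in J}=\{u_\alpha M_\alpha\}_\alpha$ with the $u_\alpha$ units, which is the assertion. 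I expect the only genuinely non-bookkeeping step to be the ``exactly one $j$'' dichotomy, where zero-divisor freeness and the two-expansion trick combine; the remaining point to be explicit about in the writeup is the meaning of ``only'', since over a semiring with nontrivial units (e.g.\ the tropical one) the family $\{u_\alpha M_\alpha\}$ genuinely varies and the statement holds only up to reindexing and rescaling by units.
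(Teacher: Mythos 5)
Your proof is correct and follows essentially the same line of argument as the paper's: expand $M_\alpha$ in the candidate basis, use the no-additive-inverses plus zero-divisor-free hypotheses to force each contributing basis vector to have support inside that of $M_\alpha$ (hence to be a scalar multiple of $M_\alpha$), and then invoke uniqueness of expansion to see that only one vector can contribute. Your write-up is more scrupulous than the paper's about the bookkeeping --- spelling out why exactly one index contributes, checking that the scalar $d_{\phi(\alpha)\alpha}$ is a unit, verifying that $\phi$ is a bijection, and stating precisely what ``only basis'' means (unique up to reindexing and rescaling by units) --- all points the paper's terse proof leaves implicit.
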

    \begin{proof}
      Let $G_i$, $i \in I$, be another basis and let $M_\alpha$ be some monomial basis element.
      Then we can write
      \begin{align*}
        M_\alpha = \sum_{j=1}^n c_j \odot G_{i_j}, 
      \end{align*}
      with $c_j \in \S_\mp \setminus \{\zero_\mp\}$.
      Let $m$ be any monomial not appearing in $M_\alpha$.
      Then
      \begin{align*}
        \zero_\mp = \sideset{}{_\mp}\bigoplus_{j=1}^n c_j \odot_\mp G_{i_j}(m).
      \end{align*}
      By assumption, this implies $c_j \odot_\mp G_{i_j}(m) = \zero_\mp, j=1,\dots,n$.
      Since $\S$ is zero-divisor free and the $c_j$ are not equal to $\zero$,
      $m$ does not appear in any of the $G_{i_j}$.
      Hence $n=1$ and $c_1 \odot G_{i_1} = M_\alpha$.
      Hence the basis $(G_i)_i$ contains, up to multiplicative factors,
      the monomial basis. Since this subset already forms a basis,
      the basis $(G_i)_i$ is equal, up to multiplicative factors,
      to the monomial basis.
    \end{proof}
\end{remark}

\subsection{Invariance to inserting zeros}
\label{ssec:invzero}

We will now show that $\mathrm{QSym}_\s$ can be characterized by invariance to ``inserting zeros''.
For $n\ge 1$ define the commutative $\mathbb{S}$-semialgebra morphism
\begin{align*}
  \insertZero_n\colon \mathbb{S}\lbracket X_1,X_2,\dots\rbracket  \to \mathbb{S}\lbracket X_1,X_2,\dots\rbracket ,
\end{align*}
induced multiplicatively from the following map on $X_1, X_2, \dots$
\begin{align*}
  \insertZero_i[X_j]
  &=
  \begin{cases}
    X_i & i < j \\
    \zero_\s   & i = j \\
    X_{i-1} & i > j.
  \end{cases}
\end{align*}
If we consider $P \in \mathbb{S}\lbracket X_1,X_2,\dots\rbracket $ as $\S$-valued function on monomials, $m = X_{t_1}^{ \alpha_1}  \cdots  X_{t_n}^{ \alpha_n}$,
(where $t_1 < \cdots < t_n$ and $\alpha_i > 0$)
this implies that $\insertZero_i[P](m)$ is equal to
\begin{center}
  \vspace{-1em}
\resizebox{.95\linewidth}{!}{
  \begin{minipage}{\linewidth}
\begin{align*}
  \begin{cases}
    P(m) & t_n < i \\
    \zero_\s & i \in \{t_1+1,\dots,t_n+1\} \\
    P\left(X_{t_1+1}^{\alpha_1} \cdots X_{t_n+1}^{\alpha_n} \right) & i \le t_1 \\
    P\left(X_{t_1}^{\alpha_1} \cdot \dots \cdot X_{t_{k-1}}^{\alpha_{k-1}} X_{t_k+1}^{\alpha_k} X_{t_{k+1}+1}^{\alpha_{k+1}}\cdots X_{t_n+1}^{\alpha_n} \right) & t_{k-1} < i \le t_k, k\in\{2,\dots,n\}
  \end{cases}
\end{align*}
\end{minipage}
}
\end{center}

\begin{example}
  \begin{align*}
    \insertZero_9[  X_2 X_6^{7} X_8^{5} ] &= X_2 X_6^{7} X_8^{5} \\
    \insertZero_8[  X_2 X_6^{7} X_8^{5} ] &= \zero_\s \\
    \insertZero_1[  X_2 X_6^{7} X_8^{5} ] &= X_1 X_5^{7} X_7^{5} \\
    \insertZero_3[  X_2 X_6^{7} X_8^{5} ] &= X_2 X_5^{7} X_7^{5}.
  \end{align*}
\end{example}

\begin{theorem}
  \label{thm:insertZero1}
  A power series expression $P \in \S\lbracket X_1,X_2,\dots\rbracket $ is in $\operatorname{QSym}_\s$ if and only if
  \begin{align*}
    \insertZero_n P = P \qquad \forall n \ge 1.
  \end{align*}
\end{theorem}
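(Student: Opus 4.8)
The plan is to prove both implications directly from the explicit description of how $\insertZero_n$ acts on a power series viewed as a function on monomials, which was spelled out just above the theorem statement.

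\textbf{The ``only if'' direction.} Suppose $P \in \operatorname{QSym}_\s$. Fix $n \ge 1$ and an arbitrary monomial $m = X_{t_1}^{\alpha_1} \cdots X_{t_k}^{\alpha_k}$ with $0 < t_1 < \dots < t_k$ and all $\alpha_i \ge 1$. I would check $\insertZero_n[P](m) = P(m)$ by a case distinction following exactly the four-way case formula displayed before the theorem. In the case $t_k < n$ there is literally nothing to prove. In the case $n \le t_1$, we get $\insertZero_n[P](m) = P(X_{t_1+1}^{\alpha_1} \cdots X_{t_k+1}^{\alpha_k})$, and since the shifted exponents $(\alpha_1,\dots,\alpha_k)$ are the same and the new index sequence $t_1+1 < \dots < t_k+1$ is still strictly increasing, quasisymmetry of $P$ gives equality with $P(m)$. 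The case $t_{j-1} < n \le t_j$ for some $j \in \{2,\dots,k\}$ is identical: the partially-shifted monomial $X_{t_1}^{\alpha_1} \cdots X_{t_{j-1}}^{\alpha_{j-1}} X_{t_j+1}^{\alpha_j} \cdots X_{t_k+1}^{\alpha_k}$ carries the same exponent tuple and a strictly increasing index tuple, so quasisymmetry again forces equality. The only remaining case is $n \in \{t_1+1,\dots,t_k+1\}$, where the formula gives $\insertZero_n[P](m) = \zero_\s$; but note that $n \in \{t_1+1,\dots,t_k+1\}$ means $n-1 \in \{t_1,\dots,t_k\}$, i.e. $X_{n-1}$ actually divides $m$, and one must observe that such monomials cannot contribute — more precisely, this case never arises for a monomial $m$ that we are trying to match, because $\insertZero_n$ maps \emph{into} the subring of power series not involving $X_{n-1}$ via $X_n \mapsto X_{n-1}$ being absent; so we only need to verify $\insertZero_n[P]$ and $P$ agree on all monomials, and on monomials divisible by $X_{n-1}$ one must argue via the fact that $P$ restricted to such monomials is determined by quasisymmetry from monomials not divisible by $X_{n-1}$, or simply re-index. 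I would be careful here and present the cleanest version: every monomial $m$ either is not divisible by $X_{n-1}$, in which case one of the first three cases applies, or is divisible by $X_{n-1}$, and then one compares to the unique ``reduced'' monomial and uses quasisymmetry of \emph{both} sides. Actually the simplest route: observe $\insertZero_n$ is multiplicative and $\S$-linear, so it suffices to verify $\insertZero_n P = P$ on the basis $\{M_\alpha\}$, which reduces to a clean bookkeeping identity $\insertZero_n M_\alpha = M_\alpha$.

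\textbf{Reduction to the monomial basis.} This is the cleaner strategy and I would organize the whole proof around it. Since every $P \in \operatorname{QSym}_\s$ is a finite $\S$-linear combination $P = \sum_i c_i M_{\alpha^{(i)}}$ (by the preceding Proposition) and $\insertZero_n$ is $\S$-linear, the ``only if'' direction reduces to showing $\insertZero_n M_\alpha = M_\alpha$ for every composition $\alpha = (\alpha_1,\dots,\alpha_k)$ and every $n \ge 1$. Writing $M_\alpha = \sum_{1 \le t_1 < \dots < t_k} X_{t_1}^{\alpha_1} \cdots X_{t_k}^{\alpha_k}$ and applying the semialgebra morphism $\insertZero_n$ termwise, each summand $X_{t_1}^{\alpha_1}\cdots X_{t_k}^{\alpha_k}$ is sent to $\zero_\s$ if some $t_j = n$, and otherwise to $X_{t'_1}^{\alpha_1}\cdots X_{t'_k}^{\alpha_k}$ where $t'_j = t_j$ if $t_j < n$ and $t'_j = t_j - 1$ if $t_j > n$. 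The map $(t_1,\dots,t_k) \mapsto (t'_1,\dots,t'_k)$ is a bijection from strictly increasing $k$-tuples avoiding the value $n$ onto \emph{all} strictly increasing $k$-tuples of positive integers (its inverse re-inserts a gap at $n$). Hence the image sum is again $\sum_{1 \le t'_1 < \dots < t'_k} X_{t'_1}^{\alpha_1}\cdots X_{t'_k}^{\alpha_k} = M_\alpha$, as desired. I would note that no cancellation is used here, which is exactly why the semiring setting causes no trouble.

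\textbf{The ``if'' direction.} Conversely, suppose $P \in \S\lbracket X_1,X_2,\dots\rbracket$ satisfies $\insertZero_n P = P$ for all $n \ge 1$. I want to deduce quasisymmetry, i.e. for $0 < s_1 < \dots < s_n$ and $0 < t_1 < \dots < t_n$ with a common exponent tuple $\alpha$, that $P(X_{s_1}^{\alpha_1}\cdots X_{s_n}^{\alpha_n}) = P(X_{t_1}^{\alpha_1}\cdots X_{t_n}^{\alpha_n})$. The idea is that any two strictly increasing index tuples of the same length can be connected by a sequence of elementary moves, each of which either shifts one index up by one past a gap or inserts/deletes a gap, and each such move is realized by composing the identity $\insertZero_m P = P$ (read in the appropriate case of the displayed formula) with itself. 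Concretely: using $\insertZero_m P = P$ in the third/fourth case lines of the displayed formula shows that $P$ is unchanged when we uniformly shift up all indices $\ge m$; iterating, $P(X_{t_1}^{\alpha_1}\cdots X_{t_n}^{\alpha_n})$ depends only on $\alpha$ and the \emph{gaps/order pattern}, not on the absolute positions, and since any two strictly increasing $n$-tuples have the same order pattern, the two values coincide. I expect \textbf{the main obstacle} to be making this last bijection-of-index-tuples argument airtight without hidden use of subtraction: one must phrase everything as equalities of sums over index sets with no cancellation, and be careful that ``inserting a zero'' is only invertible at the level of index relabelling (not at the level of the ring), which is fine because the argument only ever manipulates monomials and their coefficients, never differences of them. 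Writing the elementary-move decomposition cleanly — showing every pair $(s_\bullet),(t_\bullet)$ is linked by moves of the form ``$\insertZero_m$'' — is the part that needs the most care, but it is combinatorial bookkeeping on compositions, entirely parallel to the classical $\operatorname{QSym}$ argument over a ring.
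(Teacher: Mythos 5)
Your strategy for the ``only if'' direction — reduce to the monomial basis $\{M_\alpha\}$ using $\S$-linearity and multiplicativity of $\insertZero_n$, then show $\insertZero_n M_\alpha = M_\alpha$ by the explicit index-relabelling bijection — is correct and in fact cleaner than the paper's treatment, which simply declares this direction ``immediate.'' You were right to abandon the termwise case analysis: the four-way display above the theorem is internally inconsistent (its ``$\zero_\s$ if $n\in\{t_1+1,\dots,t_k+1\}$'' case overlaps with and contradicts the third and fourth cases, because on the level of coefficients $\insertZero_n[P](m)$ is always $P$ evaluated at the \emph{unique} preimage monomial, never $\zero_\s$), so the confusion you ran into there is a defect of the displayed formula, not of your reasoning, and the basis argument sidesteps it entirely.

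For the ``if'' direction you have the right idea — repeatedly use $\insertZero_m P = P$ to uniformly shift all indices $\ge m$ up by one until an arbitrary strictly increasing tuple is reached — and this is exactly what the paper does, via the explicit composition $(\insertZero_1)^{t_1-1}(\insertZero_2)^{t_2-t_1-1}\cdots(\insertZero_n)^{t_n-t_{n-1}-1}$ (with the leftmost factor applied to $P$ first), which sends the coefficient at $X_1^{\alpha_1}\cdots X_n^{\alpha_n}$ to the coefficient at $X_{t_1}^{\alpha_1}\cdots X_{t_n}^{\alpha_n}$. Your write-up, however, does not quite close the loop: the sentence asserting that $P$ ``depends only on $\alpha$ and the gaps/order pattern'' is self-undermining — if the value could still depend on the gaps, the subsequent appeal to ``same order pattern'' would not suffice. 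What you actually want (and what the shift moves deliver) is the stronger statement that $P(X_{t_1}^{\alpha_1}\cdots X_{t_n}^{\alpha_n})$ depends on $\alpha$ alone, which follows by exhibiting, for each target tuple $(t_1,\dots,t_n)$, the concrete sequence of shift moves carrying $(1,\dots,n)$ to it: first shift everything up $t_1-1$ times (using $\insertZero_1$), then shift the tail $\ge 2$ up $t_2-t_1-1$ times (using $\insertZero_2$), and so on. Making this explicit, as the paper does, removes the hand-waving and also answers your own worry about ``hidden subtraction'' — none is used, since one only ever relabels index tuples and never forms differences of semiring elements.
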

\begin{example}
  If $P$ is invariant in the prescribed sense,
  then for any monomial $m$
  \begin{align*}
    P(m)
    =
    \Big( (\insertZero_1)^2 \insertZero_{2} P \Big)( m ).
  \end{align*}
  We apply this to $m = X_{3}^{7} X_{5}$ to get
  \begin{align*}
    P(X_{1}^{7} X_{2})
    =
    \Big( (\insertZero_1)^2 \insertZero_{2} P \Big)( X_{1}^{7} X_{2} )
    =
    P( X_3^{7} X_{5} ).
  \end{align*}
  Since the time points $3,5$ were arbitrary, the coefficients of all monomials $X_{t_1}^{7} X_{t_2}$, $1 \le t_1 < t_2 < +\infty$, must coincide.
\end{example}
\begin{proof}
  $\Rightarrow$:
  Immediate, since the morphism zero preserves the exponents and the order of the variables involved.

  $\Leftarrow$:
  Let $n \ge 1$, $\alpha_1, \ldots, \alpha_n$,
  $0 < t_1 < \dots < t_n$ be given.
  We then have
  \begin{align*}
    P\left( X_{1}^{\alpha_1} \cdots X_{n}^{\alpha_n} \right)
    &=
    \Big( (\insertZero_1)^{t_1-1} (\insertZero_{2})^{t_2-t_1-1} \cdots (\insertZero_{n})^{t_n-t_{n-1}-1} P \Big)\left( X_{t_1}^{\alpha_1} \cdots X_{t_n}^{\alpha_n} \right) \\
    &=
    P\left( X_{t_1}^{\alpha_1} \cdots X_{t_n}^{\alpha_n} \right).
  \end{align*}
  Since $n$, $\alpha_1, \ldots, \alpha_n$ and $t_1, \ldots, t_n$ were arbitrary this shows that $P$ is quasisymmetric.
\end{proof}

From Theorem \ref{thm:insertZero1} we get the following consequence.
\begin{corollary}
\label{cor:inv1}
    $\TSS(z)_{0,\infty}$ is invariant to inserting $\zero_\s$ into the time series $z \in \mathbb{S}_{\zero_\s}^{d,\N_{\ge 1}}$.
\end{corollary}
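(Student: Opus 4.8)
The plan is to reduce the claim to a coefficient-wise identity for a single insertion and then apply \Cref{thm:insertZero1}. Fix $n\ge 1$ and let $z'\in\mathbb{S}_{\zero_\s}^{d,\N_{\ge 1}}$ be the time series obtained from $z=(z_1,z_2,\dots)$ by inserting the zero vector $\zero_\s^d$ of $\mathbb S^d$ in position $n$, so that $z'_i=z_i$ for $i<n$, $z'_n=\zero_\s^d$, and $z'_i=z_{i-1}$ for $i>n$. A general insertion of zeros is a finite composition of such single insertions, and $\ISS^\S_{0,\infty}$ is determined by its coefficients $\langle\ISS^\S_{0,\infty}(\cdot),w\rangle_\s$ over words $w\in A^*$; so it suffices to prove $\langle\ISS^\S_{0,\infty}(z'),w\rangle_\s=\langle\ISS^\S_{0,\infty}(z),w\rangle_\s$ for every $w=w_1\cdots w_k$.

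First I would invoke the identification, recorded in \Cref{sec:qsym} (and in \cite[Remark 3.5]{DET2020} for the multidimensional version), of each ISS-coefficient with an evaluation of a monomial quasisymmetric expression: writing $M_w$ for the monomial quasisymmetric expression associated with $w$, one has $\langle\ISS^\S_{0,\infty}(z),w\rangle_\s=M_w\evaluatedAt{X_i=z_i,\ i\ge 1}$, and likewise with $z$ replaced by $z'$. The key step is the observation that inserting $\zero_\s^d$ at position $n$ matches, through the evaluation homomorphism $X_i\mapsto z_i$, precomposition with the semialgebra morphism $\insertZero_n$: evaluating $\insertZero_n[X_j]$ at $X_i=z_i$ returns $z_j$ for $j<n$, $\zero_\s$ for $j=n$, and $z_{j-1}$ for $j>n$, which is exactly $z'_j$. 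Since both $M_w\evaluatedAt{X_i=z'_i}$ and $(\insertZero_n M_w)\evaluatedAt{X_i=z_i}$ arise from semialgebra morphisms agreeing on the generators $X_j$, they coincide. Finally $M_w\in\operatorname{QSym}_\s$ (the monomial quasisymmetric expressions form a basis of it), so \Cref{thm:insertZero1} gives $\insertZero_n M_w=M_w$, whence $M_w\evaluatedAt{X_i=z'_i}=M_w\evaluatedAt{X_i=z_i}$, i.e. $\langle\ISS^\S_{0,\infty}(z'),w\rangle_\s=\langle\ISS^\S_{0,\infty}(z),w\rangle_\s$.

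The main obstacle I anticipate is bookkeeping rather than conceptual: one must set up the multidimensional monomial quasisymmetric expressions and the evaluation map carefully enough that the identity $M_w\evaluatedAt{X_i=z'_i}=(\insertZero_n M_w)\evaluatedAt{X_i=z_i}$ and the matching of index shifts are transparent. A self-contained alternative, bypassing $\operatorname{QSym}_\s$, follows directly from \eqref{S-ISS}: in the defining sum for $\langle\ISS^\S_{0,\infty}(z'),w\rangle_\s$ every summand in which some index equals $n$ vanishes, because $z'_n=\zero_\s^d$ forces the corresponding factor $(z'_n)^{\odot_\s w_\ell}$ to be $\zero_\s$ and $\zero_\s$ annihilates the product; the remaining summands are in order-preserving bijection with the index tuples for $z$ by decreasing each index that exceeds $n$ by one, and under this bijection every summand equals the corresponding summand for $z$, since $z'_j=z_j$ for $j<n$ and $z'_j=z_{j-1}$ for $j>n$. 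This reproduces the combinatorics of \Cref{thm:insertZero1} at the level of the index set, so either route is acceptable.
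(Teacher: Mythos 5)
Your primary route is exactly the argument the paper intends: \Cref{cor:inv1} is presented as an immediate consequence of \Cref{thm:insertZero1}, and your chain — identifying $\langle\ISS^\S(z),w\rangle_\s$ with an evaluation of a monomial quasisymmetric expression, observing that inserting $\zero_\s$ into $z$ corresponds via the evaluation homomorphism to precomposition with $\insertZero_n$, then applying the theorem to $M_w\in\operatorname{QSym}_\s$ — is precisely the proof the paper leaves implicit. Your self-contained alternative, which argues directly from the defining sum \eqref{S-ISS} by discarding all summands in which some index equals $n$ (as $\zero_\s$ annihilates the product) and reindexing the remaining tuples by the obvious order-preserving bijection, is also correct, and is in fact slightly cleaner for $d>1$, since it sidesteps the multidimensional version of $\operatorname{QSym}_\s$ and of \Cref{thm:insertZero1} that the QSym route implicitly relies on but the paper only sketches by reference.
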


More precisely, let \(\tau_n\colon\mathbb S_{\zero_\s}^{d,\N_{\ge 1}}\to\mathbb S_{\zero_\s}^{d,\N_{\ge 1}}\) be the map that inserts \(\zero_\s\) at position \(n\) into a time series, that is,
\[
	\tau_n(z)_m = \begin{dcases}z_m&m<n\\\zero_\s&m=n\\z_{m-1}&m>n\end{dcases}.
\]
With this notation, \Cref{cor:inv1} means that if \(z'=\tau_{n_1}\circ\dotsb\circ\tau_{n_k}(z)\) for some set of integers \(0\le n_1\le\dotsb\le n_k\) then
\[
	\TSS(z')_{0,\infty}=\TSS(z)_{0,\infty}.
\]
In fact, under some restrictions, a converse statement is also true.
Recall that a semiring $\S$
\begin{itemize}
  \item has the \textbf{cancellation property} if $a\odot c=b\odot c$ and $c\not=\zero$ imply $a = b$,
  \item is \textbf{zero-divisor free} if $a\odot b = \zero$ implies $a=\zero$ or $b=\zero$.
\end{itemize}
We remark that
if $\S$ is a ring then both notions coincide and $\S$ is then also called an integral domain.
For semirings, the cancellation property is the strictly stronger notion of the two as the following examples show.
\begin{example}
  The bottleneck semiring $(\R \cup \{\pm \infty\},\max,\min,-\infty,+\infty)$ does \emph{not}
  satisfy the cancellation property, as for example $3 \odot 7 = 7 = 4 \odot 7$,
  but it is zero-divisor free: $\min\{ a, b \} = -\infty$ implies that $a=-\infty$ or $b=-\infty$.

  \bigskip

  As another example consider
  the semiring of ideals of a commutative ring $R$
  \begin{align*}
    \S &\coloneqq \{ I \subset R : I \text{ ideal} \} \\
    a \oplus b &\coloneqq ( a \cup b ) = \{ x + y : x \in a, y \in b \} \\
    a \odot b  &\coloneqq ( x \cdot y: x\in a, y \in b ) = \{ x_1 \cdot y_1 + \dots + x_n y_n : x_i \in a, y_i \in b, n \ge 1 \} \\
    \zero_\s &\coloneqq \emptyset \\
    \one_\s &\coloneqq R.
  \end{align*}
  Its multiplicative semigroup has been extensively studied, see for example \cite{gilmer1992multiplicative}.

  If $R$ is an integral domain, then $\S$ is clearly zero-divisor free.
  It does, in general, \emph{not} have the cancellation property.
  Indeed in $R = \Z[2i]$ we have
  \begin{align*}
    (2) \odot (2,2i) = (2i) \odot (2,2i),
  \end{align*}
  but $(2) \not= (2i)$.

  Another example is $R = \mathbb{C}[x,y]$ with
  \begin{align*}
    C &= (x,y) \\
    A &= (x^2+y^2,xy,z)     \\ 
    B &= (x^2,y^2,xy,z) = C^2. 
  \end{align*}
  Then
  \begin{align*}
    A\odot C = B\odot C.
  \end{align*}
  It turns out that $\S$ has the cancellation property if and only if
  every ideal in $R$ is locally a regular principal ideal,
  \cite{anderson1997characterization}.
\end{example}

In the following, given \(z\in\S^{d,\N_{\ge1}}_{\zero_\s}\) we denote by \(\check{z}\) its \emph{compression}, obtained by deleting all the zero entries in \(z\) before it is eventually constant.
\begin{theorem}
  \label{thm:CPandZDF}
    If $\S$ has the cancellation property, then
	the identity $\TSS(z)_{0,\infty}=\TSS(z')_{0,\infty}$ implies that \(\check{z}=\check{z}'\) for all \(z,z'\in\mathbb S^{d,\N_{\ge 1}}_{\zero_\s}\).

	If the identity \(\TSS(z)_{0,\infty}=\TSS(z')_{0,\infty}\) implies that \(\check{z}=\check{z}'\) for all \(z,z'\in\mathbb S^{d,\N_{\ge 1}}_{\zero_\s}\)
    then $\S$ is zero-divisor free.
\end{theorem}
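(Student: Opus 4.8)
The plan is to prove the two implications separately; in both the main structural input is the formula \eqref{S-ISS} expressing a coefficient of $\TSS$ as a strict iterated sum, together with \Cref{cor:inv1}. For the first implication, by \Cref{cor:inv1} we have $\TSS(z)_{0,\infty}=\TSS(\check z)_{0,\infty}$, so it suffices to prove that whenever $y,y'\in\mathbb S^{d,\N_{\ge1}}_{\zero_\s}$ are already \emph{compressed} -- i.e.\ $y_i\ne\zero^d_\s$ for all $i$ below the eventually-zero tail, and likewise $y'$ -- the identity $\TSS(y)_{0,\infty}=\TSS(y')_{0,\infty}$ forces $y=y'$; applying this to $y=\check z$ and $y'=\check z'$ then gives the claim. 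Let $M,M'$ be the lengths of the initial nonzero blocks of $y,y'$. First I record that the cancellation property implies $\S$ is zero-divisor free: from $a\odot_\s b=\zero_\s=\zero_\s\odot_\s b$ with $b\ne\zero_\s$, cancellation forces $a=\zero_\s$; hence any finite $\odot_\s$-product of nonzero elements of $\S$ is nonzero.

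For each $i\le M$ fix a coordinate $l_i\in\{1,\dots,d\}$ with $y_i^{(l_i)}\ne\zero_\s$. In \eqref{S-ISS} the sum attached to a word of length $\ell$ runs over strictly increasing $\ell$-tuples of positions, and any tuple meeting $\{M+1,M+2,\dots\}$ contributes $\zero_\s$; hence every word of length $>M$ has coefficient $\zero_\s$, while the length-$M$ word $w_\star\coloneqq[\w{l_1}]\cdots[\w{l_M}]$ has coefficient $R\coloneqq y_1^{(l_1)}\odot_\s\cdots\odot_\s y_M^{(l_M)}\ne\zero_\s$. Thus $M$ is the maximal length of a word carrying a nonzero $\TSS(y)_{0,\infty}$-coefficient, which depends only on $\TSS(y)_{0,\infty}$, so $M=M'$. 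Now fix $i\le M$ and a coordinate $l$, and let $w_{i,l}$ be $w_\star$ with its $i$-th letter $[\w{l_i}]$ replaced by $[\w{l_i}\w l]$. Both words have length $M$, so only the tuple $(1,\dots,M)$ contributes, and by commutativity of $\odot_\s$ we get $\langle\TSS(y)_{0,\infty},w_\star\rangle_\s=R$ and $\langle\TSS(y)_{0,\infty},w_{i,l}\rangle_\s=R\odot_\s y_i^{(l)}$. Evaluating the same words on $y'$ gives $R'\coloneqq y_1'^{(l_1)}\odot_\s\cdots\odot_\s y_M'^{(l_M)}$ and $R'\odot_\s y_i'^{(l)}$; matching $w_\star$-coefficients gives $R'=R\ne\zero_\s$, and matching $w_{i,l}$-coefficients gives $R\odot_\s y_i^{(l)}=R\odot_\s y_i'^{(l)}$, whence $y_i^{(l)}=y_i'^{(l)}$ after cancelling $R$. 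As $i,l$ were arbitrary, $y=y'$. Note that the \emph{full} cancellation property, not merely zero-divisor freeness, is what enters this last step -- precisely the asymmetry reflected in the theorem.

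For the converse I argue by contraposition. Assume $\S$ is not zero-divisor free and pick $a,b\in\S\setminus\{\zero_\s\}$ with $a\odot_\s b=\zero_\s$. Taking $d=1$, set $z\coloneqq(a,b,\zero_\s,\zero_\s,\dots)$ and $z'\coloneqq(a\oplus_\s b,\zero_\s,\zero_\s,\dots)$; then $\check z=z$ has length $2$ while $\check z'$ has length $\le1$ (it is $z'$ if $a\oplus_\s b\ne\zero_\s$ and the all-zero series otherwise), so $\check z\ne\check z'$. Yet $\TSS(z)_{0,\infty}=\TSS(z')_{0,\infty}$: the empty word gives $\one_\s$ for both; a word $[\w1^{\alpha_1}]\cdots[\w1^{\alpha_k}]$ with $k\ge2$ gives $\zero_\s$ for $z'$ (only one nonzero position) and $\zero_\s$ for $z$ because its only contributing tuple $(1,2)$ produces a product containing the factor $a\odot_\s b=\zero_\s$; and a length-one word $[\w1^\alpha]$ gives $a^{\odot_\s\alpha}\oplus_\s b^{\odot_\s\alpha}$ for $z$ and $(a\oplus_\s b)^{\odot_\s\alpha}$ for $z'$, which coincide because, expanding by distributivity and commutativity, every monomial in $(a\oplus_\s b)^{\odot_\s\alpha}$ that mixes $a$ and $b$ contains $a\odot_\s b=\zero_\s$, leaving only the pure powers. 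This contradicts the assumed implication, so $\S$ is zero-divisor free. (For $d>1$, run the same construction in the first coordinate, with all other coordinates set to $\zero_\s$.)

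The main obstacle is conceptual rather than computational: recognising that the forward direction genuinely needs the cancellation property -- strictly stronger than zero-divisor freeness, as the bottleneck semiring shows -- in order to strip off the common factor $R$, and, in the converse, finding the pair $z,z'$ and spotting the collapse $(a\oplus_\s b)^{\odot_\s\alpha}=a^{\odot_\s\alpha}\oplus_\s b^{\odot_\s\alpha}$ under $a\odot_\s b=\zero_\s$ that renders them indistinguishable; the remainder is the routine bookkeeping of which index tuples contribute in \eqref{S-ISS}.
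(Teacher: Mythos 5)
Your proof is correct, and while it shares the same high-level strategy as the paper's (extract the effective length from the longest word with a nonzero coefficient, then peel off entries using cancellation; for the converse, exhibit a pair of time series with distinct compressions that the signature cannot separate), the execution is genuinely cleaner in both directions. For the forward direction, the paper reduces to $d=1$ ``without loss of generality'' and asserts the general case follows ``along each dimension''; but projecting coordinate-wise only yields $\check{z^{(l)}}=\check{z'^{(l)}}$ for each $l$, which is strictly weaker than $\check z=\check z'$ (the coordinate-wise compressions forget the relative positions of supports across coordinates), so the paper's remark glosses over the genuinely multidimensional bookkeeping. Your device of choosing, for each position $i\le M$, a witnessing coordinate $l_i$ and then comparing $w_\star$ to the one-letter perturbations $w_{i,l}$ handles the multidimensional case directly and correctly. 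For the converse, your choice $z=(a,b,\zero_\s,\dots)$ versus $z'=(a\oplus_\s b,\zero_\s,\dots)$, together with the ``Frobenius-like'' collapse $(a\oplus_\s b)^{\odot_\s\alpha}=a^{\odot_\s\alpha}\oplus_\s b^{\odot_\s\alpha}$ valid whenever $a\odot_\s b=\zero_\s$, works uniformly, whereas the paper uses $z=(a,b,\dots)$ versus $z'=(b,a,\dots)$ and is then forced into case distinctions when $a=b$ (distinguishing $a+a\neq 0,a$, $a+a=0$, $a+a=a$; the $a+a=0$ subcase as written in the paper appears to contain a slip, since the signature of $(a,a+a,\zero_\s,\dots)=(a,\zero_\s,\dots)$ is not trivial). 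Net effect: same idea, but your version is more careful in the forward multidimensional step and avoids the case analysis in the converse.
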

\begin{remark}
  It is an open question whether the second part of the theorem can be strengthened to
  imply the cancellation property.
\end{remark}
\begin{proof}
	Suppose $\S$ has the cancellation property and let \(z,z'\in\S^{d,\N_{\ge 1}}_{\zero_\s}\) be such that \(\TSS(z)_{0,\infty}=\TSS(z')_{0,\infty}\).
	Without loss of generality, we assume that \(d=1\). In the general case, the conclusion follows after applying the following argument along each dimension.

	Let \(N\in\N_{\ge1}\) be the largest integer such that \(\langle\TSS(z)_{0,\infty},[\w1]^N\rangle\neq 0\).
	Then there exists an increasing sequence \(1\le i_1<\dotsb<i_N\) of integers such that \(z_{i_j}\ne\zero_\s\), and \(z_n=\zero_\s\) for all \(n>i_N\).
	In particular, we must have
	\[
		\langle\TSS(z)_{0,\infty},[\w1]^N\rangle=z_{i_1}\odot\dotsm\odot z_{i_N}.
	\]
	We call the integer \(N\) the \emph{effective length} of \(z\), and we note that it must be the stay the same for \(z'\), although the exact sequence of timestamps might differ.

	Now, by assumption, we have that
	\begin{equation}
	\label{eq:tw.id}
		z_{i_1}\odot\dotsm\odot z_{i_N}=z'_{j_1}\odot\dotsb\odot z'_{j_N}.
	\end{equation}
	Moreover, by picking out the coefficient of the word \([\w{11}][\w1]^{N-1}\) we see that
	\[
		z_{i_1}^{\odot 2}\odot z_{i_2}\odot\dotsb\odot z_{i_N}=(z'_{j_1})^{\odot 2}\odot z'_{j_2}\odot\dotsb\odot z'_{j_N}=z'_{j_1}\odot z_{i_1}\odot\dotsb\odot z_{i_N}.
	\]
	Since \(\S\) is has the cancellation property and \(z_{i_1}\neq\zero_\s\), this implies that \(z^{\phantom\prime}_{i_1}=z'_{j_1}\).

    Inserting this equality into \cref{eq:tw.id} and using again the fact that $\S$ has the cancellation property,  we conclude that
	\begin{equation}
\label{eq:tw.id2}
		z_{i_2}\odot\dotsm\odot z_{i_N}=z'_{j_2}\odot\dotsm\odot z'_{j_N}.
	\end{equation}
	Testing now against the word \([\w1][\w{11}][\w1]^{N-2}\) implies, by a similar argument, that \(z^{\phantom\prime}_{i_2}=z'_{j_2}\).
	The argument then continues by replacing this new identity into \cref{eq:tw.id} to obtain an equality similar to \cref{eq:tw.id2} but with fewer terms, and then picking out words of the form \([\w1]^k[\w{11}][\w1]^{N-k-1}\).
	This procedure clearly terminates after \(N\) steps, and the output is that \(z^{\phantom\prime}_{i_k}=z'_{j_k}\) for all \(k=1,\dotsc,N\), i.e., \(\check{z}=\check{z}'\).

    Assume now that $\S$ is not zero-divisor free.
	Then, there exist \(a,b\in\S\setminus\{\zero_\s\}\) such that \(a\odot b=0\).
    If $a\not= 0$ the time series \(z=(a,b,\zero_\s,\dotsc)\) and \(z'=(b,a,\zero_\s,\dotsc)\) are
    such that \(\check{z}\neq\check{z}'\)
    but with \(\TSS(z)_{0,\infty}=\TSS(z')_{0,\infty}\).
	Indeed, it is easy to check that for the words \([\w1]\), \([\w{11}]\) and \([\w1][\w1]\) both coefficients coincide.
	The only other potentially non-zero coefficients in both signatures are those associated with words of the form \([\w1^k][\w1]\) or \([\w1][\w1^k]\) for some \(k\ge 2\).
    In any case, we have that the coefficients are either \(a^{\odot k} \odot b\) or \(b^{\odot k} \odot a\) which all vanish.

    Assume now $a=b$, i.e. $a^2 = \zero_\s$.
    If $a+ a \not= 0$ and $a + a \not= a$ we can repeat the argument with
    \(z=(a,a+a,\zero_\s,\dotsc)\) and \(z'=(a+a,a,\zero_\s,\dotsc)\).
    If $a+a = 0$, then the signature
    of $z=(a,a+a,\zero_\s,\dotsc)$ is trivial, although $\check{z}$ is not trivial.

    Finally, if $a+a = a$ then
    \(z=(a,\zero_\s,\dotsc)\) and \(z'=(a,a,\zero_\s,\dotsc)\) have the same iterated-sums signature
    although $\check{z}\not=\check{z}'$.
    This finishes the proof.
\end{proof}

\section{Time warping invariants in an idempotent semiring}
\label{sec:timeWarpingInvariants}
\newcommand\idem{\mathrm{idem}}

\Cref{ex:logTimeSeries} together with \Cref{cor:inv1}
shows one way to obtain time warping invariants
of a real-valued time series.
This does not cover the invariant \eqref{eq:min} though.

Since $\R \subset \R \cup \{+\infty\}$, and since the tropical semiring is idempotent
we can also calculate $\TSS(z)$ on a real-valued time series that is eventually constant.
Recall that $\TSS(z)=\TSS_{0,\infty}(z)$. Since
\begin{align*}
  \left\langle \TSS(z), [\w{1}] \right\rangle = \min_i z_i,
\end{align*}
this includes the invariant \eqref{eq:min}.
But, as is quickly seen, most coefficients
are \emph{not} invariant to time warping.
To wit,
\begin{align}
  \label{eq:strict}
  \left\langle \TSS(z), [\w{1}][\w{1}] \right\rangle = \min_{i_1 < i_2} \{ z_{i_1} + z_{i_2} \},
\end{align}
gives, for,
\begin{align*}
  z  &= (1,-3,2,2,\dots) \\
  z' &= (1,-3,-3,2,2,\dots),
\end{align*}
the values $-2$ and $-6$ respectively.

It turns out that if we change the strict inequality over points in time in \eqref{eq:strict} into a weak or non-strict inequality,
namely
\begin{align*}
  \min_{i_1 \le i_2} \{ z_{i_1} + z_{i_2} \},
\end{align*}
then we \emph{do} get a time warping invariant.
In this section we would like to spell out how this works in general.

\begin{definition}
Assume that $\S$ is an idempotent semiring.
Let $z$ be a time series with values in $\S$ that is eventually constant.
We define for $1 \le s < t \le +\infty$,
\begin{align}
  \label{S-ISS-weak}
  \left\langle \ISS_{s,t}^{\S,\idem}(z), w\right\rangle \coloneqq \sideset{}{_\s}\bigoplus_{s < j_1 \le j_2 \le \dots \le j_k < t+1} z_{j_1}^{\odot_\s w_1} \odot_\s \cdots \odot_\s z_{j_k}^{\odot_\s w_k},
\end{align}
where the possibly infinite sum is well-defined, since $\S$ is idempotent and $z$ is eventually constant.
As before, we write $\ISS^{\S,\idem}(z) = \ISS_{0,+\infty}^{\S,\idem}(z)$.
\end{definition}

The following lemma is immediate.
\begin{lemma}
  \label{lem:idempotentTimeWarping}
  $\ISS_{s,t}^{\S,\idem}(z)$ is invariant to \textbf{time warping}. That is, define for $n \ge 1$ the time series $\tau_n(z)$ as
  \begin{align*}
    \tau_n(z)_j \coloneqq
    \begin{cases}
      z_j     & j \le n \\
      z_{j-1} & j > n.
    \end{cases}
  \end{align*}
  Then, for all $n \ge 1$:
  \begin{align*}
    \ISS_{s,t}^{\S,\idem}(\tau_n(z))
    =
    \ISS_{s,t}^{\S,\idem}(z).
  \end{align*}
\end{lemma}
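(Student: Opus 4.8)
The plan is to reduce the lemma to the single algebraic fact that $\opluss$ is idempotent, $a\opluss a=a$, combined with a relabeling of the summation indices in \eqref{S-ISS-weak}; I spell it out for $t=+\infty$, which is the case used for $\ISS^{\S,\idem}(z)=\ISS^{\S,\idem}_{0,+\infty}(z)$.

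First I would record the order-preserving surjection $\psi_n\colon\N_{\ge1}\to\N_{\ge1}$ given by $\psi_n(j)=j$ for $j\le n$ and $\psi_n(j)=j-1$ for $j>n$. By the very definition of $\tau_n$ one has $\tau_n(z)_j=z_{\psi_n(j)}$ for all $j$, and $\psi_n$ is injective except that $\psi_n(n)=\psi_n(n+1)=n$. Hence, for a word $w=w_1\cdots w_k\in A^*$ and a chain $s<j_1\le\dots\le j_k$, the image $s<\psi_n(j_1)\le\dots\le\psi_n(j_k)$ is again admissible in \eqref{S-ISS-weak} and
\[
  \tau_n(z)_{j_1}^{\odots w_1}\odots\cdots\odots\tau_n(z)_{j_k}^{\odots w_k}
  =
  z_{\psi_n(j_1)}^{\odots w_1}\odots\cdots\odots z_{\psi_n(j_k)}^{\odots w_k}.
\]
Moreover the induced map on weakly increasing $k$-chains is onto: the chain $s<i_1\le\dots\le i_k$ is the image of its lift $j_\ell\coloneqq i_\ell$ for $i_\ell\le n$ and $j_\ell\coloneqq i_\ell+1$ for $i_\ell>n$, which is again weakly increasing. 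Each of its fibres is finite and nonempty, with at most $2^k$ elements, since $\psi_n^{-1}$ of a point has at most two elements.

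It then remains to group the (possibly infinite) $\opluss$-sum defining $\langle\ISS_{s,+\infty}^{\S,\idem}(\tau_n(z)),w\rangle$ along these fibres: on the fibre over a chain $\mathbf i=(i_1\le\dots\le i_k)$ every summand equals one and the same element $z_{i_1}^{\odots w_1}\odots\cdots\odots z_{i_k}^{\odots w_k}$ by the displayed identity, so idempotency, applied to the finitely many summands of the fibre, makes the $\opluss$-sum over the fibre equal to that element; summing over all $\mathbf i$ recovers $\langle\ISS_{s,+\infty}^{\S,\idem}(z),w\rangle$. The only genuine technical point — and the main obstacle — is justifying this regrouping when the sums are infinite. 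Since $z$, hence $\tau_n(z)$, is eventually constant, for all sufficiently large $T$ the value of $\langle\ISS^{\S,\idem}(z),w\rangle$ is already attained by restricting to chains with all entries at most $T$ (this is precisely what makes \eqref{S-ISS-weak} well-defined), and likewise for $\tau_n(z)$ after replacing $T$ by $T+1$; on such a finite window the regrouping is an ordinary rearrangement of a finite $\opluss$-sum and is valid by commutativity and associativity of $\opluss$. Equivalently, one first establishes the purely finite statement — inserting a copy of the entry $y_n$ immediately after it in a finite sequence $(y_1,\dots,y_m)$ does not change the finite analogue of \eqref{S-ISS-weak}, by exactly the argument above — and then applies it to the truncations $(z_1,\dots,z_T)$ for $T$ past the constant tail.
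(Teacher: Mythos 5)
The paper offers no proof of this lemma --- it is dismissed with the single sentence ``The following lemma is immediate.'' --- so there is no argument in the text to compare yours against.

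Your proof is correct. The argument you spell out (encode the time warp by the order-preserving surjection $\psi_n$ with $\tau_n(z)_j=z_{\psi_n(j)}$, push chains forward along $\psi_n$, observe each fiber of the induced map on weakly increasing chains is a non-empty finite set on which the summand is constant, and collapse each fiber by idempotency) is the canonical way to justify the claim, and you handle the only delicate point --- regrouping a possibly infinite $\opluss$-sum --- by passing to a finite truncation past the constant tail, which is exactly what makes \eqref{S-ISS-weak} well defined in the first place.

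One remark worth recording: you rightly restrict to $t=+\infty$ (and, for your lifted chain $j_\ell$ to remain above $s$ for every $n\ge 1$, effectively to $s=0$, i.e.\ to $\ISS^{\S,\idem}(z)=\ISS^{\S,\idem}_{0,+\infty}(z)$). This is not just a simplification. As literally stated, with fixed $s,t$ and ``for all $n\ge1$'', the identity fails: e.g.\ over $\R_\mp$ with $z=(3,1,2,2,\dots)$, $s=0$, $t=2$, $n=1$, one has $\langle\ISS^{\S,\idem}_{0,2}(z),[\w1]\rangle=1$ while $\langle\ISS^{\S,\idem}_{0,2}(\tau_1(z)),[\w1]\rangle=3$, because the insertion pushes the entry $1$ out of the finite window; similarly for $s\ge 1$ and $n\le s$ the window shifts. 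The invariance the paper actually uses, and which your proof establishes, is the one for $\ISS^{\S,\idem}_{0,+\infty}$. Pointing this out makes the proposal not only complete but more precise than the paper's statement.
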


\begin{lemma}
  \label{lem:idemShuffle}
  $\ISS_{s,t}^{\S,\idem}(z)$ is a shuffle character, i.e.
  \begin{align*}
    \left\langle \ISS_{s,t}^{\S,\idem}(z), v \right\rangle
    \odot_\s
    \left\langle \ISS_{s,t}^{\S,\idem}(z), w \right\rangle
    =
    \left\langle \ISS_{s,t}^{\S,\idem}(z), v \shuffle w \right\rangle.
  \end{align*}

\end{lemma}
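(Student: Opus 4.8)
The plan is to mimic the inductive proof of \Cref{lem:qsIdentity} almost verbatim, but to replace ordinary summation-by-parts \eqref{sumbyparts} by its idempotent counterpart \eqref{Idemsumbyparts}. The point is that in the idempotent case the diagonal term of summation-by-parts, which in \Cref{lem:qsIdentity} produced the bracket (``quasi'') contributions $[v_k\ w_\ell]$, now gets absorbed via $a\opluss a=a$, and what remains is exactly the recursion \eqref{Sshuff} for the plain shuffle product. It suffices to prove the identity when $v,w$ are words; the general case in $\mathbb S\langle A\rangle$ follows by bilinearity of $\shuffle$ and $\odots$ together with $\mathbb S$-linearity of the pairing.

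First I would record the non-strict analogue of the half-shuffle identity \eqref{eq:halfShuffle}: for a word $x\in A^*$ and a letter $a\in A$,
\[
  \bigopluss_{s < i < t+1} \left\langle \ISS_{s,i}^{\S,\idem}(z), x \right\rangle \odots z_i^{\odot_\s a} = \left\langle \ISS_{s,t}^{\S,\idem}(z), x a \right\rangle,
\]
which follows immediately from \eqref{S-ISS-weak} by isolating the value $i$ of the last (largest) summation index; note that here the inner signature runs up to $i$ \emph{inclusive}, precisely because the non-strict inequalities allow the preceding indices to coincide with $i$. All sums appearing are well-defined, since $z$ is eventually constant and $\S$ is idempotent (indeed each $\langle\ISS_{s,i}^{\S,\idem}(z),x\rangle$ stabilises once $i$ passes the point where $z$ is constant).

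Then I would induct on $q=\len(v)+\len(w)$. The cases $q\le 1$ are trivial (using $\emptyWord\shuffle u=u$ and $\langle\ISS_{s,t}^{\S,\idem}(z),\emptyWord\rangle=\one_\s$). For $q\ge 2$, write $v=v_1\cdots v_k$ and $w=w_1\cdots w_\ell$, both nonempty, and set, for $s<i<t+1$,
\[
  f_i \coloneqq \left\langle \ISS_{s,i}^{\S,\idem}(z), v_1\cdots v_{k-1}\right\rangle \odots z_i^{\odot_\s v_k}, \qquad g_i \coloneqq \left\langle \ISS_{s,i}^{\S,\idem}(z), w_1\cdots w_{\ell-1}\right\rangle \odots z_i^{\odot_\s w_\ell},
\]
so that $\bigopluss_{s<i\le j} f_i=\langle\ISS_{s,j}^{\S,\idem}(z),v\rangle$ and likewise for $g$. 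Applying \eqref{Idemsumbyparts} to the product $(\bigopluss_i f_i)\odots(\bigopluss_j g_j)$ splits it as $\bigopluss_{s<i\le j<t+1}f_i\odots g_j \;\opluss\; \bigopluss_{s<j\le i<t+1}f_i\odots g_j$ (the two summands overlapping on the diagonal, harmlessly, by idempotency). In the first summand, fixing $j$ and summing over $i$ turns the factor $\bigopluss_{s<i\le j}f_i$ into $\langle\ISS_{s,j}^{\S,\idem}(z),v\rangle$; the induction hypothesis (applicable since $\len(v)+\len(w_1\cdots w_{\ell-1})=q-1$) rewrites $\langle\ISS_{s,j}^{\S,\idem}(z),v\rangle\odots\langle\ISS_{s,j}^{\S,\idem}(z),w_1\cdots w_{\ell-1}\rangle$ as $\langle\ISS_{s,j}^{\S,\idem}(z),v\shuffle(w_1\cdots w_{\ell-1})\rangle$, and the half-shuffle identity above collapses the $j$-sum to $\langle\ISS_{s,t}^{\S,\idem}(z),(v\shuffle(w_1\cdots w_{\ell-1}))w_\ell\rangle$. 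Symmetrically the second summand equals $\langle\ISS_{s,t}^{\S,\idem}(z),((v_1\cdots v_{k-1})\shuffle w)v_k\rangle$. Adding the two and invoking the shuffle recursion \eqref{Sshuff}, namely $v\shuffle w=((v_1\cdots v_{k-1})\shuffle w)v_k+(v\shuffle(w_1\cdots w_{\ell-1}))w_\ell$, finishes the induction.

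The only genuinely new ingredient — and the place to be careful — is the substitution of idempotent summation-by-parts \eqref{Idemsumbyparts} for \eqref{sumbyparts}: it is exactly the disappearance of the diagonal ``bracket'' term (equivalently, its absorption via $a\opluss a=a$) that replaces the quasi-shuffle by the plain shuffle, so one should make sure \eqref{Idemsumbyparts} is applied in the correct form (the overlapping diagonal ranges $i\le j$ and $j\le i$). One should also check that the reindexing manipulations of the possibly infinite semiring sums are legitimate, which they are because $z$ is eventually constant, so only finitely many distinct summands occur; everything else is a transcription of the proof of \Cref{lem:qsIdentity}.
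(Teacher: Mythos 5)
Your proof is correct and follows essentially the same route as the paper's: induct on total length, replace ordinary summation-by-parts by its idempotent version \eqref{Idemsumbyparts}, and observe that the diagonal term disappears, leaving the shuffle recursion \eqref{Sshuff} in place of the quasi-shuffle one. The paper's own proof is a one-line remark (``follows analogously to \Cref{lem:qsIdentity}''), so you have additionally filled in the one nontrivial bookkeeping point the paper glosses over — that the inner signature in the definition of $f_i$, $g_i$ and in the half-shuffle identity must run up to $i$ \emph{inclusive} (rather than $i-1$ as in the strict case) so as to reproduce the non-strict chains of \eqref{S-ISS-weak}. That detail is correct and worth noting.
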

\begin{example}
Using, for example, the computation in \eqref{shufflesum}, we see that 
  \begin{align*}
    &\left\langle \ISS_{s,t}^{\S,\idem}(z), [\w1^7][\w1^3] \right\rangle
    \odot_\s
    \left\langle \ISS_{s,t}^{\S,\idem}(z), [\w1^5] \right\rangle \\
    &=
       \left\langle \ISS_{s,t}^{\S,\idem}(z), [\w1^7][\w1^3][\w1^5] + [\w1^7][\w1^5][\w1^3] + [\w1^5][\w1^7][\w1^3] \right\rangle,
  \end{align*}
  where we used idempotency of $\opluss$.
\end{example}

\begin{proof}
  The proof follows analogously to the one of \Cref{lem:qsIdentity}.
  Owing to idempotency, for $f_i,g_i \in \S$, $i=s+1,\dots,t$,
  \eqref{sumbyparts} becomes
  \begin{align*}
    \left( \bigopluss_{s < i \le t} f_i \right)
    \odots
    \left( \bigopluss_{s < j \le t} g_j \right)
    =
    \left( \bigopluss_{s < i \le j \le t} f_i \odots g_j \right)
    \opluss
    \left( \bigopluss_{s < j \le i \le t} f_i \odots g_j \right).
  \end{align*}
  This leads to the last term in \eqref{eq:threeTerms} not being present
  and hence to a shuffle product instead of a quasi-shuffle product.
\end{proof}

We note that, in the tropical semiring, $\ISS^{\R_\mp,\idem}$ is very degenerate, in the sense that, in the one-dimensional case,
\begin{align*}
  \left\langle \ISS^{\R_\mp,\idem}(z), [\w1^{a_1}] \cdots [\w1^{a_n}] \right\rangle
  =
  \left\langle \ISS^{\R_\mp,\idem}(z), [\w1^{a_1 + \cdots + a_n}] \right\rangle.
\end{align*}

\newcommand{\llrrparen}[1]{
  \left(\mkern-4mu\left(#1\right)\mkern-4mu\right)}
  \newcommand\laurentIdem{\llrrparen{\idem}}
To get a more interesting object we can allow powers in $\Z \setminus \{0\}$ (instead of just $\N_{\ge 1}$),
e.g.
\begin{align*}
  \left\langle \ISS^{\R_\mp,\laurentIdem}(z), [\w1^{-3}][\w1^5] \right\rangle
  &\coloneqq
  \sideset{}{_{\R_\mp}}\bigoplus_{0 < j_1 \le j_2} z_{j_1}^{\odot_{\R_\mp} -3} \odot_{\R_\mp} z_{j_2}^{\odot_{\R_\mp} 5} \\
  &=
  \min_{0 < j_1 \le j_2} \{ -3 z_{j_1} + 5 z_{j_2} \}.
\end{align*}

\begin{proposition}
  Let $\R_\mp$ be the tropical semiring. Define for $w \in A^*$, where $A = \Z\setminus\{0\}$,
  \begin{align*}
		\left\langle \ISS^{\R_\mp,\laurentIdem}_{s,t}(z), w \right\rangle
    \coloneqq \sideset{}{_{\R_\mp}}\bigoplus_{s < j_1 \le j_2 \le \dots \le j_k \le t} z_{j_1}^{\odot_{\R_\mp} w_1} \odot_{\R_\mp} \cdots \odot_{\R_\mp} z_{j_k}^{\odot_{\R_\mp} w_k},
  \end{align*}
  Then:
  \begin{enumerate}
    \item $\ISS^{\R_\mp,\laurentIdem}$ is a shuffle character.
    \item $\ISS^{\R_\mp,\laurentIdem}$ satifies Chen's identity.
    \item $\ISS^{\R_\mp,\laurentIdem}$ is time warping invariant.
  \end{enumerate}
\end{proposition}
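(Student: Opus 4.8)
The plan is to observe that all three assertions are established by rerunning, essentially verbatim, the proofs already given for $\ISS^{\S,\idem}$ in this section, since none of those proofs uses anything about the exponents beyond the fact that they index repeated semiring multiplications; enlarging the exponent set from $\N_{\ge1}$ to $\Z\setminus\{0\}$ changes nothing structural. The only genuinely new point, which I would dispatch first, is that the possibly infinite tropical sum defining $\langle\ISS^{\R_\mp,\laurentIdem}_{s,t}(z),w\rangle$ is still well-defined, i.e. the infimum $\min_{s<j_1\le\dots\le j_k\le t}\{w_1z_{j_1}+\dots+w_kz_{j_k}\}$ is attained. Since $z$ is real-valued and eventually equal to some $c\in\R$, say $z_j=c$ for $j>N$, and since $j_1\le\dots\le j_k$, every admissible tuple splits its indices into a block contained in $\{1,\dots,N\}$ and a (contiguous) block beyond $N$; the former block contributes one of finitely many values, the latter contributes $(w_{m+1}+\dots+w_k)c$ for some $m\in\{0,\dots,k\}$, again one of finitely many values. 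Hence the infimum is over a finite set of reals and is attained, regardless of the signs of the $w_i$ — this is exactly the observation made in the definition preceding \Cref{lem:idempotentTimeWarping}, now re-checked for negative exponents.

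For part~(1) I would rerun the proof of \Cref{lem:idemShuffle}: the idempotent summation-by-parts identity used there, and the half-shuffle identity \eqref{eq:halfShuffle} with the non-strict order, are statements purely about the totally ordered index set together with $\le$, so they carry over unchanged, and the induction on $\len(v)+\len(w)$ then gives $\langle\ISS^{\R_\mp,\laurentIdem}_{s,t}(z),v\rangle\odot_{\R_\mp}\langle\ISS^{\R_\mp,\laurentIdem}_{s,t}(z),w\rangle=\langle\ISS^{\R_\mp,\laurentIdem}_{s,t}(z),v\shuffle w\rangle$. For part~(2) I would adapt the proof of \Cref{lem:chen}: fix $s<r<u$ and partition the admissible tuples $s<j_1\le\dots\le j_k\le u$ according to the number $m$ of indices that are $\le r$, obtaining $s<j_1\le\dots\le j_m\le r$ and $r<j_{m+1}\le\dots\le j_k\le u$ with no collision across the cut, since the indices are integers and $j_m\le r<r+1\le j_{m+1}$. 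This is precisely a deconcatenation $w=uv$ of the indexing word, and since the value of the full tuple factors as the $\odot_{\R_\mp}$-product of the values of the two blocks, summing yields
\[
  \langle\ISS^{\R_\mp,\laurentIdem}_{s,u}(z),w\rangle=\sideset{}{_{\R_\mp}}\bigoplus_{uv=w}\langle\ISS^{\R_\mp,\laurentIdem}_{s,r}(z),u\rangle\odot_{\R_\mp}\langle\ISS^{\R_\mp,\laurentIdem}_{r,u}(z),v\rangle,
\]
which is Chen's identity in the form \eqref{eq:chen}, equivalently the concatenation-product identity \eqref{lem:chen2}.

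For part~(3) I would imitate \Cref{lem:idempotentTimeWarping}: it suffices to prove invariance under $\tau_n$, i.e. under inserting a duplicate of $z_n$ between positions $n$ and $n+1$. The cleanest route is to combine (2) with the elementary identity $\langle\ISS^{\R_\mp,\laurentIdem}_{n-1,n+1}(\tau_n z),w\rangle=\langle\ISS^{\R_\mp,\laurentIdem}_{n-1,n}(z),w\rangle$ for every word $w=w_1\cdots w_k$: on the left every index lies in $\{n,n+1\}$, where $\tau_n(z)$ takes the value $z_n$, so by idempotency the sum equals $(w_1+\dots+w_k)z_n$, which is also the value on the right; then Chen's identity together with $\ISS^{\R_\mp,\laurentIdem}_{0,n-1}(\tau_n z)=\ISS^{\R_\mp,\laurentIdem}_{0,n-1}(z)$ and $\ISS^{\R_\mp,\laurentIdem}_{n+1,\infty}(\tau_n z)=\ISS^{\R_\mp,\laurentIdem}_{n,\infty}(z)$ (the latter by re-indexing) gives $\ISS^{\R_\mp,\laurentIdem}(\tau_n z)=\ISS^{\R_\mp,\laurentIdem}(z)$. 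Alternatively one argues directly that the value set $\{w_1z_{j_1}+\dots+w_kz_{j_k}:s<j_1\le\dots\le j_k\le t\}$ is unchanged by the duplication, since a tuple using the inserted position produces the same value as the tuple obtained by moving each such occurrence to position $n$, which stays admissible precisely because the order is non-strict.

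The main obstacle is purely a matter of bookkeeping rather than ideas: one must make certain that positivity of the exponents is nowhere used in the three cited proofs (it is not — each is driven entirely by the combinatorics of the ordered index set and by idempotency of $\R_\mp$), and that the single place where the earlier development relied on $\N_{\ge1}$-valued exponents, namely the well-definedness of the infinite tropical sums, still goes through for $\Z\setminus\{0\}$-valued exponents; this is exactly the finite-value-set argument given in the first paragraph.
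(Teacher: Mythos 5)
Your proof is correct and matches the structure of the paper's own argument: parts (1) and (3) are obtained by re-running the earlier idempotent lemmas (\Cref{lem:idemShuffle} and \Cref{lem:idempotentTimeWarping}, whose proofs depend only on the ordered index set and idempotency, not on the sign of the exponents), while part (2) follows from partitioning the admissible tuples by where they fall across the cut point $r$. You are in fact slightly more careful than the paper at two points: your disjoint partition indexed by the number of indices $\le r$ is clean (the displayed partition in the paper mixes the conventions $i_1<r$ and $i_k\le r$ and, read literally, misses tuples with $i_1=r<i_2$, though the intent is clear), and you explicitly justify that the infinite tropical infimum is still attained for exponents in $\Z\setminus\{0\}$, a well-definedness point the paper leaves implicit.
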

\begin{proof}
  The shuffle property follows as in \Cref{lem:idemShuffle}.
  The time-warping invariance is immediate, as in \Cref{lem:idempotentTimeWarping}.
  Chen's identity follows from
  the partition of the time interval
  \begin{align*}
    \{ p < i_1 \le \dots \le i_k \le q \}
    &=
    \{ p \le r < i_1 \le i_2 \le \dots \le i_k \le q \}
    \dot\cup
    \{ p < i_1 < r \le i_2 \dots \le i_k \le q \} \\
    &\qquad
    \dot\cup
    \dots
    \{ p < i_1 \le i_2 \le \dots \le i_k \le r \le q \},
  \end{align*}
  for $p < r \le q$.
\end{proof}

\begin{remark}
  The iterated-sums signature over a field of characteristic $0$
  is, via the Hoffman exponential, in bijection to a certain \emph{iterated-integrals} signature,
  \cite[Theorem 5.3]{DET2020}.
  The iterated-sums signature satisfies a quasi-shuffle identity, whereas the 
  iterated-integrals signature is a shuffle character.
  In fact, there is a whole family of signature-like maps, indexed by \(\theta\in(-1,1)\) obtained by composing
  the iterated sums signature with some linear transformation \(A_{\theta\to 1}\) which generalize
  Hoffman's exponential (it being the case \(\theta=0\)), see \cite[Remark 2.3]{DET2020a}.

  When working over an idempotent semiring, however, only the cases \(\theta=-1\) and \(\theta=0\)
  are well defined, and both maps coincide.
\end{remark}

\section{Algorithmic considerations}

\label{sec:algo}

As in the setting of iterated sums over a field,
Chen's identity, \Cref{lem:chen},
leads to an algorithm
to calculate
\begin{align*}
  \left\langle \TSS_{0,T}(z), w \right\rangle,
\end{align*}
for a word $w$ of length $\len(w)=k$, which is of computational cost $\O(T \cdot k)$ 
and storage cost $\O(T \cdot k)$.

\begin{algorithm2e}[H]

    \SetKwData{Prev}{prev}\SetKwData{Curr}{curr}
    \SetKwFunction{len}{len}\SetKwFunction{FindCompress}{FinCompress}
    \SetKwProg{Fn}{Function}{}{end}\SetKwFunction{FTSS}{TSS}%

    \KwData{\ \ $z$: time series, $w$: word }
    \KwResult{An array containing $\langle \TSS_{0,t}(z), w \rangle,\ t=1,\ldots,\len(z)$}

    \Fn{\FTSS{$T,z,w$}}{

    $T$ $\coloneqq$ $\len(z)$

    \eIf{$\len(w)$ $\ge 2$}
    {
       \Prev $\coloneqq$ $\FTSS(z,w_1 \cdots w_{k-1})$
    }
    {
       \Prev $\coloneqq$ $(\one_\s,\ldots,\one_\s)$
    }

    \BlankLine
    \Curr $\coloneqq$ $[\zero_\s]$

    \For{$t = 1$ \texttt{\normalfont\KwTo} $T$}{
      \textsf{curr[t]} $\coloneqq$ \textsf{curr[t-1]} $\oplus_\s$ \textsf{prev[t-1]} $\odot_\s\ z_t^{\odot_\s w_k}$
    }
    \KwRet{\Curr}
    }
    \caption{Calculation of $\TSS$ via Chen's identity / dynamic programming}
\end{algorithm2e}

\bigskip

One sees that the algorithm actually yields
\begin{align}
  \label{eq:actually}
  \left\langle \TSS_{0,t}(z), w_1 \cdots w_i \right\rangle,
  \qquad \text{ for \emph{all} $i=1, \ldots, k$, $t=1, \ldots, T$}.
\end{align}
\newcommand\K{{\mathbb{K}}}
Over a \emph{field} $\K$, owing to the group-likness of $\ISS^\K$ implying the identity
\begin{align*}
  \ISS^\K_{s,t}(z) = \ISS^\K_{0,s}(z)^{-1} \ISS^\K_{0,t}(z),
\end{align*}
one obtains, with computational preparation cost $\O(T \cdot k)$
and storage cost $\O(T \cdot k)$ (i.e.~the cost of computing respectively storing \eqref{eq:actually}, for $\S=\K$),
the values
\begin{align*}
  \Big\langle \ISS^\K_{s,t}(z), w_1 \cdots w_i \Big\rangle,
\end{align*}
for \emph{arbitrary} $s,t=1, \ldots, T$, $i=1, \ldots, k$, 
at query cost $\O(k)$.%
\footnote{
  For a field $\K$, the bialgebra $\K\langle A\rangle$ possesses an antipode $\alpha$,
  and then
  \begin{align*}
    \Big\langle \ISS^\K_{s,t}(z), w_1 \cdots w_i \Big\rangle,
    =
    \Big\langle \ISS^\K_{0,s}(z)^{-1} \ISS^\K_{0,t}(z), w_1 \cdots w_i \Big\rangle
    =
    \Big\langle \ISS^\K_{0,s}(z) \otimes \ISS^\K_{0,t}(z), (\alpha \otimes \id) \Delta ( w_1 \cdots w_i ) \Big\rangle,
  \end{align*}
  involves $\O(k)$ many queries to 
  $\ISS^\K_{0,s}(z)$ and $\ISS^\K_{0,t}(z)$.
}
Such a cheap access to iterated sums over \emph{arbitrary} sub-intervals
is key to some applications in machine learning.

Over a general semiring, $\TSS_{0,s}(z)^{-1}$ is not defined.
But, a slightly less efficient calculation for all subintervals simultaneously is still possible:
\begin{proposition}
  There is an algorithm that, for given a word $w$ of length $k$ and time horizon $T$,
  %
  obtains
  \begin{align*}
    \left\langle \TSS_{s,t}(z), w_1 \cdots w_i \right\rangle \text{ and }\quad
    \left\langle \TSS_{s,t}(z), w_i \cdots w_k \right\rangle,
  \end{align*}
  for all $s,t=1, \ldots, T$, $i=1, \ldots, k$ at ``query'' cost $\O( \log(T)^k )$.
  
  The computational ``preparation'' cost (explained in the proof)
  for this algorithm is $\O(k \log(T) T)$ and the storage cost is $\O(k T)$.
\end{proposition}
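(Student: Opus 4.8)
The plan is to lay a balanced binary ``segment tree'' over the time axis, to precompute at each of its nodes enough of the iterated-sums signature over the corresponding dyadic sub-interval, and then to reassemble an arbitrary query interval by iterating Chen's identity (\Cref{lem:chen}). Concretely, after padding $T$ up to the next power of two (a constant factor) I would take the complete binary tree whose leaves are $1,\dots,T$; a node at height $h$ then corresponds to a dyadic interval $I=(a,a+2^{h}]$, there are $\O(T)$ nodes in all, and the depth is $\lceil\log_2 T\rceil$. At each node $I$ I would store the semiring values $\langle\ISS^\S_I(z), w_aw_{a+1}\cdots w_b\rangle$ as $(a,b)$ ranges over the contiguous subwords of $w=w_1\cdots w_k$ (the empty word included, with value $\one_\s$); in particular every prefix $w_1\cdots w_i$ and every suffix $w_i\cdots w_k$ sits among these. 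That is $\O(k)$ data per node when the length of $w$ is held fixed (genuinely $\O(k^2)$ if the dependence on $k$ is tracked), whence $\O(kT)$ storage.

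For the \textbf{preparation} I would fill the nodes blockwise using the linear-time procedure of \Cref{sec:algo}: run on the subsequence $(z_{a+1},\dots,z_b)$ it returns, in time $\O(k\,(b-a))$, the iterated sums of all prefixes of the input word over all truncation times, from which the data attached to $I=(a,b]$ is read off (a mirrored pass supplies the suffixes, and the remaining interior subwords by further such passes). Since $\sum_{\text{nodes at height }h}(b-a)=T$ and there are $\lceil\log_2 T\rceil$ heights, the total preparation cost is $\O(k\log(T)\,T)$. Equivalently one may build a node's data bottom-up from that of its two children via the concatenation form \eqref{lem:chen2} of Chen's identity.

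To answer a query $\langle\ISS^\S_{s,t}(z), w_1\cdots w_i\rangle$ (and symmetrically $\langle\ISS^\S_{s,t}(z), w_i\cdots w_k\rangle$), I would decompose the half-open interval $(s,t]$ in the standard segment-tree fashion into a disjoint union $I_1\sqcup\dots\sqcup I_m$ of tree nodes with $m\le 2\lceil\log_2 T\rceil$; writing $I_j=(p_{j-1},p_j]$ with $s=p_0<p_1<\dots<p_m=t$ and iterating \eqref{eq:chen} across $p_1,\dots,p_{m-1}$ gives
\[
  \langle\ISS^\S_{s,t}(z), w_1\cdots w_i\rangle
  =
  \bigopluss_{w_1\cdots w_i\,=\,u_1u_2\cdots u_m}
  \langle\ISS^\S_{I_1}(z),u_1\rangle\odots\cdots\odots\langle\ISS^\S_{I_m}(z),u_m\rangle,
\]
the sum running over all ways to write $w_1\cdots w_i$ as a concatenation of $m$ consecutive, possibly empty, subwords; each factor on the right is one of the precomputed node values. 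Such a factorization is precisely a choice of cut positions $0\le c_1\le\dots\le c_{m-1}\le i$, so there are $\binom{i+m-1}{m-1}\le\binom{k+2\lceil\log_2 T\rceil-1}{k}=\O(\log(T)^k)$ of them; with $\O(\log T)$ operations per term the query runs in $\O(\log(T)^k)$.

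The step I expect to be the genuine obstacle is conceptual rather than computational. Over a field one has $\ISS^{\mathbb K}_{s,t}(z)=\ISS^{\mathbb K}_{0,s}(z)^{-1}\,\ISS^{\mathbb K}_{0,t}(z)$, so a subinterval is recovered from two stored signatures with $\O(k)$ work; over a semiring $\oplus_\s$ is not invertible and no such shortcut is available, so one is forced through the full $m$-fold Chen expansion over $\O(\log T)$ dyadic blocks. This is exactly what turns the query cost from $\O(k)$ into $\O(\log(T)^k)$, and it is the reason one must keep at every node the signature evaluated on all subwords of $w$ rather than merely on its prefixes — the interior factors $u_2,\dots,u_{m-1}$ are in general neither prefixes nor suffixes of $w$ — so that the delicate point is organizing this subword data, and choosing exactly what each node carries, so as to meet the stated storage and preparation budgets simultaneously. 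What is left is routine: an induction on $m$ that iterating \eqref{eq:chen} produces exactly the displayed factorization sum, together with the cost arithmetic above.
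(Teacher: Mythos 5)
Your proposal takes essentially the same route as the paper: lay down a dyadic segment tree over $[1,T]$, precompute node-level iterated-sums signatures, and iterate Chen's identity \eqref{eq:chen} across the $\O(\log T)$ dyadic blocks covering $(s,t]$, with the factorization count $\binom{i+m-1}{m-1}=\O(\log(T)^k)$ giving the query cost. In fact you are slightly \emph{more} careful than the paper at one point: the paper's preparation step stores at each node only the $2k$ words $W=\{w_1\cdots w_i,\ w_i\cdots w_k : i=1,\dotsc,k\}$, yet in the iterated Chen expansion $\bigopluss_{v_1\cdots v_q=v}\bigodotss_j\langle\TSS_{I_j}(z),v_j\rangle$ the interior factors $v_2,\dotsc,v_{q-1}$ (and even $v_q$, when $v=w_1\cdots w_i$ is a proper prefix of $w$) range over \emph{arbitrary} contiguous subwords of $w$, not just prefixes and suffixes of $w$; so the set $W$ as written is insufficient for the query step. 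Your fix — storing all contiguous subwords of $w$ at each node — is what is actually needed, at the price of $\O(k^2)$ rather than $\O(k)$ data per node, i.e.\ genuine storage $\O(k^2T)$ instead of the $\O(kT)$ stated (both collapse to $\O(T)$ once $k$ is held fixed, which is the regime the proposition has in mind). Everything else — the dyadic cover of $(s,t]$ with $m\le 2\lceil\log_2T\rceil$ blocks, the bottom-up or blockwise preparation in $\O(k\log(T)T)$, the binomial bound on the number of factorizations — matches the paper's argument.
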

\begin{proof}
  For ease of notation we assume $T = 2^N$ for some $N$.
  Let $w = w_1 \cdots w_k$ be given and define $W \coloneqq \{w_1 \cdots w_i, w_i \cdots w_k \mid i = 1, \ldots, k\}$.
  Note that $\# W = 2k$.
  \begin{enumerate}

    \item (\textbf{Preparation})
      Calculate
      \begin{align*}
        I_{j,n}
        \coloneqq
        \Big\langle \TSS_{j \cdot 2^n, (j+1) \cdot 2^n}(z), v \Big\rangle,\ n = 0, \dots, N,\ j = 0, \ldots ,2^{N-n} - 1,\ v \in W,
      \end{align*}
      which, for a fixed $j,n$ has computational cost $\O( \# W\ 2^n )$
      and hence in total gives computational ``preparation'' cost $\O( \# W\ N 2^N ) = \O(k \log(T) T)$
      and storage cost $\O(2^{N+1}) = \O(T)$.

    \item (\textbf{Query})
      For given $1 \le s < t \le T=2^N$
      pick $I_{j_1,n_1} < \cdots < I_{j_q,n_q}$ with
      \begin{align*}
        \bigcup_{i=1}^q I_{j_i,n_i} = (s,t],
      \end{align*}
      and such that $q$ is minimal.
      This can, for example, be done by the exploration of a corresponding binary tree,
      with cost $\O(N) = \O(\log(T))$.
      Note that $q \le 2 N = 2 \log(T)$.
      Then, for $v \in W$,
      \begin{align*}
        \Big\langle \TSS_{s,t}(z), v \Big\rangle
        =
        \Big\langle \prod_{i=1}^q \TSS_{j_i \cdot 2^{n_i}, (j_i+1) \cdot 2^{n_i}}(z), v \Big\rangle
        =
        \sum_{v_1 \cdots v_q= v} \bigodotss_{i=1}^q \Big\langle \TSS_{j_i \cdot 2^{n_i}, (j_i+1) \cdot 2^{n_i}}(z),  v_i \Big\rangle.
      \end{align*}
      The $q$-th iterated ``deconcatenation''
      $\sum_{v_1 \cdots v_q= v} \cdots$
      contains $\O( {k+q-1 \choose q-1} )$ terms.
      The latter expression is monotonically increasing in $q$ and hence bounded by
      \begin{align*}
        \O( {k+2\log(T)-1 \choose 2\log(T)-1} ),
      \end{align*}
      yielding a ``query'' cost of 
      \begin{align*}
        \O( {k+2\log(T)-1 \choose 2\log(T)-1} + \log(T) )
        =
        \O( \log(T)^k ).
      \end{align*}

  \end{enumerate}

\end{proof}

\begin{figure}[!ht]
    \centering
        \includegraphics[width=0.6\textwidth]{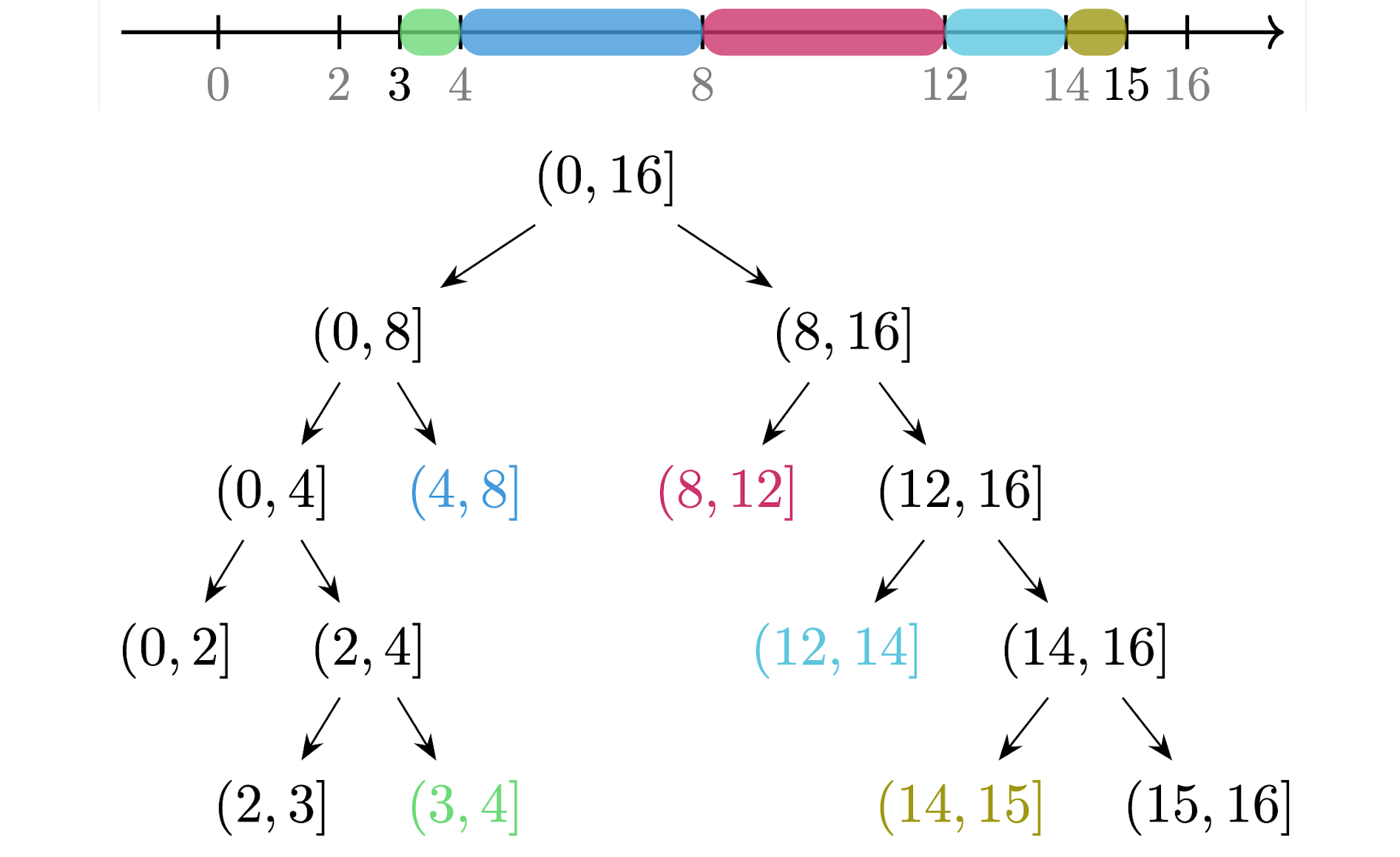}
        \caption{Example of the coarsest dyadic partition of $(3,15]$}
        \label{fig:binaryTree}
\end{figure}

\begin{example}
  Consider the time horizon $T=16=2^4$
  and
  $s=3, t=15$. There is a (unique) coarsest partition of the interval $(3,15]$ using only
  dyadic intervals
  $(j \cdot 2^n, (j+1) \cdot 2^n]$, namely
  \begin{align*}
    (3,15] = (3,4] \dot\cup (4,8] \dot\cup (8,12] \dot\cup (12,14] \dot\cup (14,15],
  \end{align*}
  see also \Cref{fig:binaryTree}.
  We then obtain
  \begin{align*}
      \Big\langle \TSS_{3,15}(z), w \Big\rangle
      &=
      \Big\langle \TSS_{3,4}(z), w \Big\rangle
      \oplus_\s
      \Big\langle \TSS_{4,8}(z), w \Big\rangle
      \oplus_\s
      \Big\langle \TSS_{8,12}(z), w \Big\rangle \\
      &\qquad
      \oplus_\s
      \Big\langle \TSS_{12,14}(z), w \Big\rangle
      \oplus_\s
      \Big\langle \TSS_{14,15}(z), w \Big\rangle.
  \end{align*}

\end{example}

\section{Conclusion}
\label{sec:concl}

In \eqref{S-ISS} we introduced the iterated-sums signature, $\TSS(z)$, over a commutative semiring $\S$.
It stores all iterated sums (taken in the semiring) of a multidimensional time series $z=(z_1,z_2,\ldots)$ with entries
$z_i \in \S^d$.
As in the case over commutative rings, this object satisfies
Chen's identity (\Cref{lem:chen})
which here as well allows for efficient computation.
It also satisfies the quasi-shuffle identity (\Cref{lem:qsIdentity})
that is, it behaves like a group-like element.
Unlike for the usual ISS there is no proper Hopf algebra structure available here
and it is in general \emph{not} possible to take the logarithm
of the signature.

In the one-dimensional case over commutative rings,
the entries of the iterated-sums signature correspond to the evaluation
of certain formal power series, namely quasisymmetric functions.
Here, this is also true (\Cref{sec:qsym}), though it is more appropriate to speak of quasisymmetric \emph{expressions},
since polynomial expressions over a semiring are \emph{not} in one-to-one correspondence with
polynomial functions.

In order to explicitly cover the expression \eqref{eq:min} from the Introduction,
we looked at the special case of idempotent semirings in \Cref{sec:timeWarpingInvariants}.

Calculation of $\TSS_{0,t}(z)$ with a \emph{fixed} starting time $0$,
can be done at the same (linear) cost as in the case over commutative rings.
Owing to the lack of additive inverses,
the calculation of $\TSS(z)$ over \emph{arbitrary} subintervals
is more intricate.
In \Cref{sec:algo} we provided an algorithm,
that incurrs an additional logarithmic cost factor.

\subsection{Open questions}

\begin{itemize}

  \item
    The iterated-sums signature over the reals has a close connection
    to discrete control theory as explored in reference \cite{gray2020nonlinear}.
    In the setting of the max-plus semiring: 
    \begin{quote}
      \emph{Is there a relation to discrete control theory in that semiring \cite{Cohen1995,CL2008,komenda2018}?\footnote{We thank one of the referees for pointing us to chapter 5 in \cite{CL2008}, where the representation of timed-event graphs by algebras of tropical operators is discussed, as well as to Theorem 5.4 in reference \cite{Cohen1995}, where the representation of the input-output map of a discrete event system by a tropical Volterra series is given. Further studies of these results in the light of our approach seem to be worthwhile.}}
    \end{quote}

  \item
    The ISS over a commutative ring contains,
    owing to the quasi-shuffle identity, many redundant entries.
    Working with the log-signature
    removes these redundancies.
    Over a general commutative semiring, we cannot take
    the logarithm of the signature,
    so an open question is 
    \begin{quote}
      \emph{How to extract the ``minimal'' information contained in the signature?}
    \end{quote}

  \item
    As seen in \Cref{rem:chen}, owing to the lack of additive inverses, Chen's identity
    only works in ``one direction''.
    \begin{quote}
      \emph{Is there a way (with maybe a larger object) of obtaining a general Chen's identity?}
    \end{quote}

  \item
    Multidimensional time series are explicitly covered by the present work.
    Just as over the reals, this amounts to projecting the time series
    to coordinates before calculating the iterated-sums.

    In the semiring setting, a more interesting approach seems possible.
    Indeed, one can turn a multidimensional real-valued time series
    into a one-dimensional semiring-valued time series.
    One example is via the map
    \begin{align*}
      \R^d &\to \text{ bounded convex polytopes } \\
      x &\mapsto \{ x \}.
    \end{align*}
    The resulting time series can then be considered in the semiring of polytopes, point 8. in \Cref{ex:semirings}.
    One can hope for tractable calculation, using the relation to the algebra
    of polynomials, \cite[Theorem 2.25]{pachter2005algebraic}.

  \item
    Chen's identity, \Cref{lem:chen}, applied
    to time points $0,t,t+1$ reads
    as
    \begin{align*}
      \left\langle \TSS_{0,t+1}(z), w \right\rangle_{\!\s}
      &=
      \left\langle \TSS_{0,t}(z), w \right\rangle_{\!\s} 
      \oplus_\s
      \left(
      \left\langle \TSS_{0,t}(z), w_1 \cdots w_{n-1} \right\rangle_{\!\s}
      \odot_\s
      \left\langle \TSS_{t,t+1}(z), w_n \right\rangle_{\!\s} \right) \\
      &=
      \left\langle \TSS_{0,t}(z), w \right\rangle_{\!\s} 
      \oplus_\s
      \left(
      \left\langle \TSS_{0,t}(z), w_1 \cdots w_{n-1} \right\rangle_{\!\s}
      \odot_\s z_t^{\odot_\s w_n}  \right),
    \end{align*}
    where we use the notation of \eqref{S-ISS}.
    This allows for an iterative calculation of this value,
    with total cost $\O(n \cdot t)$.
    This can be seen as a special case of dynamic programming.
    \begin{quote}
      \emph{Is there a deeper connection to the dynamic programming literature?}
    \end{quote}
    
  \item
    The iterated-integrals signature has been investigated
    from the perspective of algebraic geometry in 
    \cite{amendola2019varieties}.
    \begin{quote}
      \emph{Is there interesting tropical algebraic geometry,
      that can be done on the objects introduced in this work?}
    \end{quote}

    \item
    In  \Cref{sec:category} we recall
    how category theory provides
    an organized view on semirings and their modules.
    In a push for further generality,
    \begin{quote}
      \emph{Is it possible to categorify the iterated-sums signature?}
    \end{quote}
    The category of polynomial functors
    \cite{spivak2020poly}
    and the abstract view on time series in
    \cite{schultz2020dynamical}
    might prove  useful in this regard.

  \item
    What connection can be drawn to the literature
    on temporal logic \cite{pnueli1977temporal}
    and Allen's temporal logic \cite{allen1983maintaining}?

\end{itemize}

\appendix
\clearpage


\section{Categorial view on semirings}
\label{sec:category}

The aim of this section is to give a brief overview of the categorical setting for semirings, semimodules, etc.
Good references on category theory are \cite{ML1971,Rie2016} (see also \cite{brandenburg2016einfuhrung} (in German)).
For the particularities of monoidal categories, we refer to \cite[Section 4.1]{Bra2014} and \cite{Mar2009}.

Recall that a \textbf{monoidal category} is a category \(\mathsf C\) with a bifunctor
\(\otimes\colon\mathsf C\times\mathsf C\to\mathsf C\), and an object \(1\in\mathsf C\) called the
\emph{unit} such that there exist natural isomorphisms
\[ 
	( (-)\otimes (-))\otimes (-)\cong(-)\otimes ( (-)\otimes (-)),
	\quad 1\otimes(-)\cong(-),
	\quad (-)\otimes 1\cong(-)
\]
and satisfy some consistency relations. Essentially, this means that there is a notion of ``tensor product'' internal to the category. A monoidal category is \textbf{symmetric} if furthermore it is endowed with a ``braiding'' or ``twisting'' natural isomorphism \(\tau_{X,Y}\colon X\otimes Y\to Y\otimes X\) such that
\( \tau_{Y,X}\circ\tau_{X,Y}=\operatorname{id}_{X\otimes Y}\).
Examples of symmetric monoidal categories include \(\mathsf{Vect}_k\) for any field \(k\) and
\(\mathsf{Mod}_R\) for any commutative ring \(R\).
In both cases, \(\otimes\) corresponds to the internal tensor product.

In any monoidal category, the notion of monoid makes sense. A \textbf{monoid} on a
monoidal category \(\mathsf C\) is an object \(M\) in \(\mathsf C\) together with two arrows
\(\mu\colon M\otimes M\to M\) and \(u\colon 1\to M\) satisfying an associativity and unitality
condition \cite[Section VII.3]{ML1971}.
Additionally, in a symmetric monoidal category,
one can also impose a commutativity constraint and obtain commutative monoids.
As an example, monoids in \(\mathsf{Vect}_k\) correspond to algebras over vector spaces.
Dually, a comonoid in \(\mathsf C\) is a monoid in the dual (or opposite) category \(\mathsf C^{\mathrm{op}}\).%
\footnote{One may also consider bimonoids and Hopf monoids.}
As monoids on the category of vector spaces correspond to algebras, comonoids in \(\mathsf{Vect}_k\)
correspond to coalgebras.
The category of monoids in \(\mathsf C\) is denoted by \(\operatorname{Mon}(\mathsf C)\).%
\footnote{The arrows are given by arrows in \(\mathsf C\) that respect the monoid structure, \cite[Definition 1.2.9]{Mar2009}.}
Likewise, the category of commutative monoids in \(\mathsf C\) is denoted by
\(\operatorname{CMon}(\mathsf C)\).

\begin{proposition}[Theorem VII.3.2 in \cite{ML1971}]
  \label{prop:free}
    Let \(\mathsf C\) be a monoidal category with countable coproducts
    and assume that for each $A \in \mathsf C$
    the functors $- \otimes A, A \otimes -$ preserve countable coproducts.
    Then the forgetful functor \(U\colon\operatorname{Mon}(\mathsf C)\to\mathsf C\)
    has a left adjoint \(F\colon\mathsf C\to\operatorname{Mon}(\mathsf C)\). On an object \(X\) in
    \(\mathsf C\), the underlying object of \(F(X)\) is
    \[ U(F(X))=\coprod_{n=0}^\infty X^{\otimes n} \]
    in \(\mathsf C\), with the monoidal structure given by the tensor product.
\end{proposition}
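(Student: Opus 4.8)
The plan is to exhibit, for each object \(X\in\mathsf C\), a \emph{universal arrow} from \(X\) to the forgetful functor \(U\); by the standard characterization of adjunctions this is equivalent to producing the left adjoint \(F\). Concretely, set \(T(X)\coloneqq\coprod_{n=0}^\infty X^{\otimes n}\) (with \(X^{\otimes 0}\coloneqq 1\) and \(X^{\otimes n}\) fixed, say, left-associated), write \(\iota_n\colon X^{\otimes n}\to T(X)\) for the coprojections, and take \(\eta_X\coloneqq\iota_1\colon X\to T(X)\). First I equip \(T(X)\) with a monoid structure: using the hypothesis that \(-\otimes A\) and \(A\otimes-\) preserve countable coproducts, one gets natural isomorphisms \(T(X)\otimes T(X)\cong\coprod_{n,m\ge 0}X^{\otimes n}\otimes X^{\otimes m}\), and the coherence isomorphisms of the monoidal structure identify \(X^{\otimes n}\otimes X^{\otimes m}\cong X^{\otimes(n+m)}\); the multiplication \(\mu\colon T(X)\otimes T(X)\to T(X)\) is then defined summandwise as the composite \(X^{\otimes n}\otimes X^{\otimes m}\cong X^{\otimes(n+m)}\xrightarrow{\iota_{n+m}}T(X)\), bundled via the universal property of the coproduct, and the unit is \(u\coloneqq\iota_0\colon 1\to T(X)\).

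Next I verify that \((T(X),\mu,u)\) is a monoid. Associativity reduces, on each triple of summands \(X^{\otimes n}\otimes X^{\otimes m}\otimes X^{\otimes p}\) of \(T(X)^{\otimes 3}\cong\coprod_{n,m,p}X^{\otimes n}\otimes X^{\otimes m}\otimes X^{\otimes p}\), to the associativity isomorphisms of \(\otimes\) together with Mac Lane's coherence theorem (the two canonical ways of reassociating down to \(X^{\otimes(n+m+p)}\) agree), and then the coproduct universal property upgrades the summandwise equality to a global one; unitality is handled identically using the left and right unitors. The preservation-of-coproducts hypothesis is used again here, since \(\mu\otimes\id\) and \(\id\otimes\mu\) must be computed on the distributed form of \(T(X)^{\otimes 3}\).

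Now the universal property. Given a monoid \((M,\mu_M,u_M)\) and a morphism \(f\colon X\to U(M)\), define \(\bar f\colon T(X)\to M\) summandwise by \(\bar f\circ\iota_n\coloneqq m^{(n)}\circ f^{\otimes n}\), where \(m^{(n)}\colon M^{\otimes n}\to M\) is the \(n\)-fold multiplication (with \(m^{(0)}=u_M\), \(m^{(1)}=\id_M\)), which is well-defined independently of bracketing by coherence plus associativity of \(\mu_M\); the coproduct universal property assembles these into a single arrow. One checks \(\bar f\) is a monoid morphism: compatibility with units is \(\bar f\circ\iota_0=u_M\) by definition, and compatibility with multiplication is verified summandwise, where it amounts to \(\mu_M\circ(m^{(n)}\otimes m^{(m)})=m^{(n+m)}\), a consequence of associativity of \(\mu_M\); moreover \(U(\bar f)\circ\eta_X=\bar f\circ\iota_1=f\). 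For uniqueness, any monoid morphism \(g\colon T(X)\to M\) with \(g\circ\iota_1=f\) satisfies \(g\circ\iota_0=u_M\) (unit preservation) and, by induction using multiplicativity of \(g\) together with \(\iota_n=\mu\circ(\iota_{n-1}\otimes\iota_1)\) up to coherence, \(g\circ\iota_n=m^{(n)}\circ f^{\otimes n}\); hence \(g=\bar f\) by the coproduct universal property. Thus \((T(X),\eta_X)\) is a universal arrow from \(X\) to \(U\), so \(X\mapsto T(X)\) extends (forced by the universal property) to a functor \(F\) left adjoint to \(U\) with \(U(F(X))=\coprod_{n=0}^\infty X^{\otimes n}\), as claimed.

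The main obstacle is bookkeeping rather than conceptual: in a genuinely non-strict monoidal category one must be careful that all the identifications \(X^{\otimes n}\otimes X^{\otimes m}\cong X^{\otimes(n+m)}\) and the iterated products \(m^{(n)}\) are \emph{coherently} well-defined, which is precisely where Mac Lane's coherence theorem enters. One may sidestep most of this by first replacing \(\mathsf C\) by an equivalent strict monoidal category, at the cost of checking that the equivalence transports countable coproducts and their preservation by \(\otimes\); since \(\mathsf{Mod}_{\mathbb S}\) and the semialgebra settings of interest are in practice handled via their strict models, this is a harmless reduction.
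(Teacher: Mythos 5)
The paper itself offers no proof of this proposition; it cites Mac Lane's Theorem~VII.3.2 directly. Your argument is essentially a faithful reconstruction of Mac~Lane's own proof: form the tensor coproduct $T(X)=\coprod_n X^{\otimes n}$, use the hypothesis that $-\otimes A$ and $A\otimes-$ preserve countable coproducts to distribute $\otimes$ over the coproduct and define the multiplication summandwise via $X^{\otimes n}\otimes X^{\otimes m}\cong X^{\otimes(n+m)}$, check the monoid axioms by coherence, and then establish the universal-arrow property of $(T(X),\iota_1)$ by extending $f\colon X\to U(M)$ to $\bar f$ via the iterated multiplications $m^{(n)}$. All the steps are sound, and the uniqueness argument by induction on the summand index is exactly what one needs. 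The one caveat you raised yourself about strictification is worth flagging but not a gap: if one prefers to reason in a strict model one must indeed verify that the equivalence of monoidal categories transports countable coproducts and their preservation by $\otimes$, which holds because a monoidal equivalence is in particular an equivalence of categories and hence preserves and reflects all (co)limits, and the preservation hypothesis for $\otimes$ transfers along the monoidal structure of the equivalence. In short, correct proof, same route as the cited source.
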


\begin{example}
	In the category \(\mathsf{Set}\), coproducts correspond to disjoint unions, i.e., for any countable sequence \((A_n)\) of sets,
	\[
		\coprod_{n=0}^\infty A_n\cong\bigsqcup_{n=0}^\infty A_n.
	\]
	The category \(\mathsf{Set}\) also possesses a monoidal structure given by the cartesian product, that is, \(A\otimes B\coloneqq A\times B\).
	Hence,
	\[
		F(X) = \bigsqcup_{n=0}^\infty X^{\times n}
	\]
	can be identified with the free monoid over the set \(X\), since monoids over \(\mathsf{Set}\) are classical monoids.
\end{example}
\begin{example}
	In the category $\mathsf{Vect}_k$, coproducts correspond to direct sums, i.e., for any countable sequence \((V_n)\) of vector spaces over \(k\),
	\[
		\coprod_{n=0}^\infty V_n\cong\bigoplus_{n=0}^\infty V_n.
	\]
	In particular
  \begin{align*}
    F(X) = T(X) = \bigoplus_{n \ge 0} X^{\otimes n},
  \end{align*}
	is the tensor algebra over $X$ since monoids over \(\mathsf{Vect}_k\) correspond to unital \(k\)-algebras.
\end{example}

Suppose that \(\mathsf C\) is a symmetric monoidal category and let \(R\) be a commutative monoid object in
\(\mathsf C\).
A \textbf{left \(R\)-module} is an object \(M\) in \(\mathsf C\) with an arrow \(\mu_M\colon R\otimes
M\to M\) defining an action of \(R\) on \(M\).
One can also define right \(R\)-modules in an obvious way, but since \(R\) is commutative both
notions coincide and we just call them \textbf{\(R\)-modules}.
The category of \(R\)-modules is denoted by \(\mathsf{Mod}_R\).%
\footnote{With obvious definition of morphisms, compare \cite[Definition 1.2.11]{Mar2009}.}

A nice example of symmetric monoidal category is \(\mathsf{Ab}\), the category of abelian groups.
The tensor product on \(\mathsf{Ab}\) is obtained form the cartesian (or direct) product of abelian
groups by modding out the relations \( (a_1,b)+(a_2,b)-(a_1+a_2,b) \) and
\((a,b_1)+(a,b_2)-(a,b_1+b_2)\). We denote it by \(\otimes\) as usual.
The unit for this tensor product is the group of integers with addition.
Monoid objects over this category correspond to rings, that is,
\(\operatorname{Mon}(\mathsf{Ab})=\mathsf{Ring}\).
Given an object $R$ in \(\operatorname{CMon}(\mathsf{Ab})\), that is, a commutative ring, the notion of
\(R\)-module corresponds to the usual notion of a module over a ring.

For any given commutative monoid object \(R\) in a symmetric monoidal category \( (\mathsf C,\otimes,1)\), the category of modules
\(\mathsf{Mod}_R\) is also a symmetric monoidal category when endowed with the tensor product
\(M\otimes_R N\) defined as the \textbf{coequalizer} of the two maps \(M\otimes R\otimes
N\rightrightarrows M\otimes N\) given by the action of \(R\) on \(M\) and \(N\).
The unit for this tensor product is \(R\) seen as a module over itself.
Hence, the complete data is \((\mathsf{Mod}_R,\otimes_R,R)\).

Now, we consider the category of monoids \(\operatorname{Mon}(\mathsf{Mod}_R)\).
We call objects in this category \textbf{algebras over \(R\)}.
Likewise, comonoids in \(\mathsf{Mod}_R\) are called \textbf{coalgebras}.

\begin{theorem}[Proposition 1.2.14 in \cite{Mar2009}]
    Let \(\mathsf C\) be a symmetric monoidal category.
    If \(\mathsf C\) is either complete or cocomplete, then so are \(\operatorname{CMon}(\mathsf C)\)
    and \(\mathsf{Mod}_R\) for any commutative monoid \(R\) in \(\mathsf C\).
    \label{thm:complete}
\end{theorem}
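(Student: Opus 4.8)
The plan is to treat $\operatorname{CMon}(\mathsf C)$ and $\mathsf{Mod}_R$ in parallel: in both cases the forgetful functor $U$ to $\mathsf C$ is well behaved, limits are the routine direction, and colimits are the delicate one. For completeness I would show that $U\colon\operatorname{CMon}(\mathsf C)\to\mathsf C$ \emph{creates} limits. Given a small diagram $D$ in $\operatorname{CMon}(\mathsf C)$, form $L\coloneqq\lim UD$ in $\mathsf C$ with limiting cone $\pi_j\colon L\to D(j)$; the maps $\mu_{D(j)}\circ(\pi_j\otimes\pi_j)\colon L\otimes L\to D(j)$ form a cone over $UD$ precisely because the structure morphisms of $D$ are monoid morphisms, so they factor through a unique $\mu_L\colon L\otimes L\to L$, and similarly one gets $u_L\colon 1\to L$. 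Associativity, unitality and commutativity of $(L,\mu_L,u_L)$, and the fact that it is the limit of $D$ in $\operatorname{CMon}(\mathsf C)$, follow by testing against the jointly monic family $(\pi_j)$; no hypothesis on $\otimes$ is used. The identical argument, with $L\otimes L$ replaced by $R\otimes L$ and the $\mu_{D(j)}$ by the action maps, shows $U\colon\mathsf{Mod}_R\to\mathsf C$ creates limits. Hence both categories are complete whenever $\mathsf C$ is.

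For cocompleteness one cannot simply lift a colimit from $\mathsf C$, since $\otimes$ need not be cocontinuous, so I would pass to the monadic picture. The category $\mathsf{Mod}_R$ is the Eilenberg--Moore category $\mathsf C^T$ for the monad $T=R\otimes(-)$, whose multiplication and unit are $\mu_R\otimes(-)$ and $u_R\otimes(-)$. For $\operatorname{CMon}(\mathsf C)$ I would observe that a cocomplete $\mathsf C$ in particular has countable coproducts, invoke \Cref{prop:free} and its commutative analogue to produce a left adjoint to $U$, and verify Beck's monadicity conditions, exhibiting $\operatorname{CMon}(\mathsf C)$ again as $\mathsf C^T$ for a suitable $T$ (built from the finite tensor powers $\coprod_n X^{\otimes n}$ and their symmetrisation). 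One then invokes the standard fact that the Eilenberg--Moore category of a monad on a cocomplete base is cocomplete as soon as it has coequalizers: an arbitrary colimit is assembled by taking the colimit of the underlying diagram in $\mathsf C$, freely generating a $T$-algebra on it, and coequalizing to force the actions to agree.

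The genuine work, and what I expect to be the main obstacle, is thus the construction of these coequalizers---equivalently, of all colimits---in $\operatorname{CMon}(\mathsf C)$ and $\mathsf{Mod}_R$. Coproducts are comparatively tame: the coproduct of finitely many commutative monoids is their tensor product equipped with the interchange multiplication (this uses only the symmetric monoidal structure), and arbitrary coproducts are filtered colimits of these. Coequalizers, however, amount to quotienting by the congruence generated by the $T$-action, whose existence must be established; the clean route is to show that $T$ preserves reflexive coequalizers, so that $\mathsf C^T$ inherits them from $\mathsf C$, after which reflexive coequalizers together with coproducts generate all colimits. In full generality these preservation statements need $\otimes$ to be cocontinuous in each variable---automatic when $\mathsf C$ is closed symmetric monoidal, as it is for $\mathsf{Ab}$, $\mathsf{Vect}_k$ and $\mathsf{Mod}_R$ over a commutative ring, which are the cases of interest here---so I would either state the result under that mild extra hypothesis or defer to \cite{Mar2009} for the argument in the bare generality asserted.
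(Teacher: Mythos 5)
The paper itself does not prove this statement; it is taken verbatim (up to renumbering) from \cite{Mar2009}, Proposition~1.2.14, and used as a black box. So there is no in-paper argument to compare your sketch against, and the review has to assess your outline on its own terms.

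Your treatment of the completeness half is correct and is the standard argument: the forgetful functor $U\colon\operatorname{CMon}(\mathsf C)\to\mathsf C$ (respectively $U\colon\mathsf{Mod}_R\to\mathsf C$) creates limits, because a cone of monoid morphisms into a limit of underlying objects factors uniquely through the induced multiplication and unit on the limit, and the jointly-monic projections detect the axioms. No hypothesis on $\otimes$ is needed here, as you say.

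The cocompleteness half is where your sketch is honest but not self-contained. You go monadic --- $\mathsf{Mod}_R\simeq\mathsf C^{R\otimes(-)}$, and $\operatorname{CMon}(\mathsf C)$ as algebras for the (symmetrised) free monoid monad via \Cref{prop:free} and Beck --- and then invoke Linton's reduction of cocompleteness of $\mathsf C^T$ to the existence of coequalizers there, which in turn you obtain from $T$ preserving reflexive coequalizers. Each of these steps genuinely requires $\otimes$ to interact well with colimits: \Cref{prop:free} itself assumes $-\otimes A$ and $A\otimes-$ preserve countable coproducts, and $R\otimes(-)$ preserving reflexive coequalizers is not automatic for a bare symmetric monoidal structure. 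You flag this explicitly and offer to add a cocontinuity/closedness hypothesis or defer to \cite{Mar2009}. That is the right instinct, and the flag is not cosmetic: the theorem as printed in the paper states no such hypothesis, so one must either import it silently from the ambient conventions of \cite{Mar2009} (where the symmetric monoidal categories in play are closed) or record it explicitly. In the paper's only use of \Cref{thm:complete} --- namely \Cref{prop:complete}, applied to $\mathsf{CMon}=\operatorname{CMon}(\mathsf{Set})$, which is closed symmetric monoidal by \cite[Appendix~B]{APT2018} --- the missing hypothesis is satisfied, so nothing downstream breaks, but as a free-standing statement the theorem needs the extra assumption that you identified. In short: your outline is sound, your diagnosis of the gap is correct, and the gap lies in the cited statement as transcribed rather than in your argument.
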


\subsection{The category of semirings}
\label{ssec:categsemi}

We apply the above construction to the category \(\mathsf{CMon}\coloneqq\operatorname{CMon}(\mathsf{Set})\) of commutative monoids.
This is a symmetric monoidal category, with product given by the tensor product of commutative monoids \cite[Appendix B]{APT2018}.
\begin{proposition}
  \label{prop:complete}~

  \begin{enumerate}
    \item 
    The category \(\mathsf{CMon}\) is complete and cocomplete.

    \item
    For $R$ an object in $\operatorname{CMon}(\mathsf{CMon})$,
    the category $\operatorname{Mod}_R$ is complete and cocomplete.
  \end{enumerate}
\end{proposition}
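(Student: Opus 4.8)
The plan is to deduce both statements from \Cref{thm:complete} by applying it twice, using only the elementary fact that \(\mathsf{Set}\) is both complete and cocomplete. First I would recall that \(\mathsf{Set}\) has all small limits (products, equalizers) and all small colimits (coproducts given by disjoint unions, coequalizers given by quotients), and that it is a symmetric monoidal category under the cartesian product. Since \(\mathsf{Set}\) is in particular complete (and also cocomplete), \Cref{thm:complete} with \(\mathsf C=\mathsf{Set}\) immediately gives that \(\operatorname{CMon}(\mathsf{Set})=\mathsf{CMon}\) is complete and cocomplete, which is part (1). (Concretely, limits in \(\mathsf{CMon}\) are created by the forgetful functor to \(\mathsf{Set}\), while cocompleteness uses that this forgetful functor is monadic for a finitary monad.)

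For part (2), I would first observe that \(\mathsf{CMon}\) is itself a symmetric monoidal category: the tensor product \(M\otimes N\) of commutative monoids corepresents bi-additive maps out of \(M\times N\), with unit the free commutative monoid on one generator, \((\N,+,0)\); this is the structure referenced via \cite[Appendix B]{APT2018}. Next I would unwind the identification that a monoid object in \((\mathsf{CMon},\otimes,\N)\) is exactly a (not necessarily commutative) semiring — a multiplication \(\mu\colon M\otimes M\to M\) is precisely a biadditive, i.e. distributive, product, the unit \(\N\to M\) picks out \(\one\), and the absorption \(\zero\odot m=\zero\) is forced by biadditivity — and hence that a \emph{commutative} monoid object in \(\mathsf{CMon}\), i.e. an object of \(\operatorname{CMon}(\mathsf{CMon})\), is exactly a commutative semiring in the sense of \Cref{def:semiring}. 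Under this identification, \(R\)-modules in \(\mathsf{CMon}\) coincide with the semimodules of \Cref{def:semimod}. Now, having established in part (1) that \(\mathsf{CMon}\) is complete and cocomplete, a second application of \Cref{thm:complete}, this time with \(\mathsf C=\mathsf{CMon}\), yields that \(\operatorname{CMon}(\mathsf{CMon})\) and \(\operatorname{Mod}_R\) are complete and cocomplete for every commutative monoid object \(R\); specializing to an arbitrary commutative semiring \(R\) gives part (2).

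The only real content beyond quoting \Cref{thm:complete} is the bookkeeping in the middle paragraph: checking that \(\mathsf{CMon}\) carries the correct symmetric monoidal structure and that semirings are precisely monoids therein. I expect this identification — rather than any limit/colimit construction — to be the main point requiring care, since it is what licenses applying the abstract theorem in the semiring setting; both facts are standard and can be cited (\cite{APT2018}, and the general principle that commutative-monoid-enriched monoids are semirings), so no further computation is needed. As an alternative route one could instead note that \(\mathsf{CMon}\) and the category of \(R\)-semimodules are varieties of (finitary) algebras, hence locally finitely presentable, and therefore complete and cocomplete; this avoids \Cref{thm:complete} entirely but requires invoking the theory of locally presentable categories.
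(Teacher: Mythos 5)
Your argument is correct and follows exactly the paper's route: apply \Cref{thm:complete} with \(\mathsf C=\mathsf{Set}\) (complete and cocomplete) to get part (1), then apply it again with \(\mathsf C=\mathsf{CMon}\) to get part (2). The extra bookkeeping you include — identifying commutative monoid objects in \(\mathsf{CMon}\) with commutative semirings and their modules with semimodules — is sound and is implicit in the paper's terse two-line proof; the locally-presentable-category alternative you sketch at the end is a genuinely different route but is offered only as an aside.
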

\begin{proof}
  1. A commutative monoid is nothing else than a commutative monoid object in the symmetric monoidal category
    \((\mathsf{Set},\times,\{*\})\) which is known to be complete and cocomplete \cite[Theorem 3.2.6, Proposition 3.5.1]{Rie2016}.
    Therefore, \(\mathsf{CMon}\) is also complete and cocomplete by \Cref{thm:complete}.

    2. This follows from point 1. and \Cref{thm:complete}.
\end{proof}

\begin{proposition}
  \label{prop:preserves}~

  \begin{enumerate}
    \item 
    In \(\mathsf{CMon}\), for each $A \in \mathsf{CMon}$
    the functors $- \otimes A$ and $A \otimes -$ preserve countable coproducts.

    \item
    For $R$ an object in $\operatorname{CMon}(\mathsf{CMon})$,
    the tensor product in $\operatorname{Mod}_R$ preserves countable coproducts.
  \end{enumerate}
\end{proposition}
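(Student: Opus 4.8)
The plan is to realize $-\otimes A$ (equivalently, by symmetry of the monoidal structure, $A\otimes-$) as a left adjoint, since left adjoints preserve all colimits and countable coproducts exist in $\mathsf{CMon}$ by \Cref{prop:complete}. Concretely, I would first recall that $(\mathsf{CMon},\otimes,\N)$ is a \emph{closed} symmetric monoidal category: for commutative monoids $A,B$ the set $[A,B]$ of monoid morphisms $A\to B$ is itself a commutative monoid under pointwise addition, and, by the very universal property defining $\otimes$ on $\mathsf{CMon}$ (morphisms $A\otimes B\to C$ correspond to maps $A\times B\to C$ that are monoid morphisms in each variable separately; see \cite[Appendix B]{APT2018}), currying gives a natural bijection $\mathsf{CMon}(M\otimes A,P)\cong\mathsf{CMon}(M,[A,P])$. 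This exhibits the adjunction $-\otimes A\dashv[A,-]$, whence $-\otimes A$ preserves countable coproducts. As a more hands-on alternative to invoking the internal hom, I could argue directly: the coproduct $\coprod_n M_n$ in $\mathsf{CMon}$ is the submonoid of finitely supported tuples in $\prod_n M_n$, and since a map out of $\coprod_n M_n$ that is additive in its first slot is determined on the summands and every element of $\coprod_n M_n$ is a finite sum of elements of those summands, bilinear maps $(\coprod_n M_n)\times A\to P$ are in natural bijection with families of bilinear maps $M_n\times A\to P$; Yoneda then gives $(\coprod_n M_n)\otimes A\cong\coprod_n(M_n\otimes A)$.

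For part (2) I would repeat the adjunction argument one level up. For a commutative monoid object $R$ in $\mathsf{CMon}$, the category $\mathsf{Mod}_R$ is again closed symmetric monoidal: the tensor $\otimes_R$ is the coequalizer of $M\otimes R\otimes N\rightrightarrows M\otimes N$ recalled in \Cref{ssec:categsemi}, and the internal hom $[N,P]_R$ is the sub-$R$-module of $[N,P]$ cut out by the equalizer enforcing $R$-linearity, with the pointwise $R$-action. Combining the $\mathsf{CMon}$-level tensor--hom adjunction with these (co)equalizer descriptions yields a natural isomorphism $\mathsf{Mod}_R(M\otimes_R N,P)\cong\mathsf{Mod}_R(M,[N,P]_R)$, so $-\otimes_R N$ is a left adjoint; since $\mathsf{Mod}_R$ is cocomplete by \Cref{prop:complete}, it preserves countable coproducts, and $N\otimes_R-$ follows again by symmetry.

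The main obstacle is not the existence of the colimits involved --- that is already \Cref{prop:complete} --- but the careful set-up of the two tensor--hom adjunctions in the semiring world rather than the abelian-group world. I expect this to be pure bookkeeping: one checks that $[A,B]$ (resp.\ $[N,P]_R$) is a commutative monoid (resp.\ $R$-module) under the pointwise operations, that currying $A\times B\to C$ into $A\to[B,C]$ is well defined, bijective and natural, and that it restricts correctly through the equalizer/coequalizer defining $[N,P]_R$ and $\otimes_R$. Once these routine verifications are in place, the proposition is immediate. I would also remark that \Cref{prop:complete} and this proposition together are exactly the hypotheses of \Cref{prop:free}, so they entail the existence of the free semiring $\coprod_n X^{\otimes n}$ on a commutative monoid $X$ and of the free $R$-semialgebra, giving in particular an indirect proof of \Cref{prp:SA.univ}.
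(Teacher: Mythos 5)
Your argument is correct and follows the same route as the paper: both parts reduce to showing the relevant symmetric monoidal category is \emph{closed}, so that $-\otimes A$ (resp.\ $-\otimes_R N$) is a left adjoint and therefore preserves all colimits. Where the paper simply cites \cite{APT2018} and \cite{Bra2014} for closedness of $\mathsf{CMon}$ and of $\mathsf{Mod}_R$, you spell out the internal homs and the currying bijections; this is just filling in the content of those references rather than a different strategy.
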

\begin{proof}
    1. By \cite[Appendix B]{APT2018}, \(\mathsf{CMon}\) is closed. In particular, this means
    that \(-\otimes A\) is left adjoint to \(\operatorname{Hom}(A,-)\). Therefore, \(-\otimes A\)
    preserves colimits \cite[Theorem 4.5.3]{Rie2016}, and in particular coproducts.
    By symmetry of the tensor product, the same is true for \(A\otimes -\).

    2. By \cref{prop:complete}, \(\mathsf{CMon}\) is complete. Therefore, by \cite[Theorem
    4.1.10]{Bra2014}, for any commutative monoid
    \(R\) in \(\mathsf{CMon}\), \(\operatorname{Mod}_R\) is a cocomplete symmetric monoidal category
    with unit \(R\) and tensor product \(\otimes_R\).
\end{proof}

The category of commutative monoids \(\operatorname{CMon}(\mathsf{CMon})\) corresponds to
\textbf{commutative semirings},
i.e., commutative rings without negative elements.%
\footnote{Also called \emph{rigs}, rings without \textbf{n}egative elements.}
For a fixed commutative semiring \(R\), objects in the category \(\mathsf{Mod}_R\) are known as
\textbf{semimodules}.
The notions of \textbf{semialgebra}, \textbf{semi-coalgebra} and \textbf{Hopf semialgebra} follow
(as monoid, comonoid, and Hopf monoid in \(\mathsf{Mod}_R\)).

\begin{theorem}
  Let $R$ be a commutative semiring.

  \begin{enumerate}
  \item
      There exists a left adjoint $F\colon \operatorname{Set} \to \mathsf{Mod}_R$ to the forgetful functor.
  $F(D)$ is the \textbf{free $R$-module over $D$}.

  \item
  There exists a left adjoint $F'\colon \mathsf{Mod}_R \to \operatorname{Mon}(\mathsf{Mod}_R)$
  to the forgetful functor.
  $F'(X)$ is the \textbf{free semialgebra over $X$}.
  \end{enumerate}
\end{theorem}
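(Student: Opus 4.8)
The plan is to obtain both adjoints by feeding the structural results of this appendix into Proposition~\ref{prop:free}, respectively into a factorization of the forgetful functor, and then to identify the abstract free objects with their concrete incarnations recorded earlier. For part~2 I would apply Proposition~\ref{prop:free} directly with $\mathsf C=\mathsf{Mod}_R$. The hypotheses to verify are that $\mathsf{Mod}_R$ is a monoidal category with countable coproducts and that $-\otimes_R A$ and $A\otimes_R-$ preserve countable coproducts for each $A$. The first holds since a commutative semiring is precisely an object of $\operatorname{CMon}(\mathsf{CMon})$, so $\mathsf{Mod}_R$ is symmetric monoidal with unit $R$ and product $\otimes_R$, and is cocomplete by Proposition~\ref{prop:complete}(2); the coproduct-preservation is Proposition~\ref{prop:preserves}(2) (equivalently, $\mathsf{Mod}_R$ is closed, so each $-\otimes_R A$ has a right adjoint, hence preserves colimits). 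Proposition~\ref{prop:free} then yields a left adjoint $F'\colon\mathsf{Mod}_R\to\operatorname{Mon}(\mathsf{Mod}_R)$ with $U(F'(X))=\coprod_{n\ge0}X^{\otimes_R n}$; as the value of a left adjoint to the forgetful functor, $F'(X)$ has the universal property of the free associative semialgebra on $X$ in the sense of Definition~\ref{def:semialg}, and taking $X=F(A)$ for an alphabet $A$ recovers $\S\langle A\rangle$ and Proposition~\ref{prp:SA.univ}.

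For part~1 I would factor the forgetful functor $U\colon\mathsf{Mod}_R\to\operatorname{Set}$ as $U=U_2\circ U_1$, with $U_1\colon\mathsf{Mod}_R\to\mathsf{CMon}$ forgetting the $R$-action and $U_2\colon\mathsf{CMon}\to\operatorname{Set}$ forgetting the monoid structure, and produce a left adjoint for each factor. The functor $U_2$ has a left adjoint $\Phi$, the free commutative monoid functor, sending a set $D$ to the commutative monoid of finite multisets over $D$ (equivalently, the finitely supported maps $D\to\N$ under pointwise addition); this is standard, $\mathsf{CMon}$ being a finitary variety over $\operatorname{Set}$. For $U_1$: since $\N$ is the unit object of $\operatorname{CMon}(\mathsf{CMon})$ one has $\mathsf{CMon}=\mathsf{Mod}_\N$, and $U_1$ is restriction of scalars along the unique semiring morphism $\N\to R$; because $\mathsf{CMon}$ is a cocomplete closed symmetric monoidal category (Propositions~\ref{prop:complete} and~\ref{prop:preserves}, together with \cite[Appendix~B]{APT2018}), the standard base-change adjunction gives that $R\otimes-\colon\mathsf{CMon}\to\mathsf{Mod}_R$ is left adjoint to $U_1$. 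Composing, $F\coloneqq(R\otimes-)\circ\Phi$ is left adjoint to $U$, and unwinding identifies $F(D)=R\otimes\Phi(D)$ with the commutative monoid of finitely supported functions $D\to R$ under pointwise addition and $R$-scalar multiplication — precisely the incarnation of the free $R$-semimodule recorded after Definition~\ref{def:semimod}.

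The universal-property checks are routine and parallel the classical module and tensor-algebra arguments that the paper already calls standard; the genuine content is bookkeeping. The one point I expect to require care is the claim in part~1 that $R\otimes-$ actually lands in $\mathsf{Mod}_R$ and is adjoint relative to the \emph{module} structure (not merely to the underlying monoid), which rests on $\otimes$ being the correct symmetric monoidal product on $\mathsf{CMon}$, on its distributing over coproducts (Proposition~\ref{prop:preserves}(1)), and on the standard construction of free modules over a monoid in a cocomplete closed symmetric monoidal category. Once that is in place, identifying $F(D)$ and $F'(X)$ with the explicit free objects is just a chase through the adjunction isomorphisms.
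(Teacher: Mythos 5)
Your proof is correct. For part~2 you match the paper exactly: apply Proposition~\ref{prop:free} with $\mathsf C=\mathsf{Mod}_R$, checking its hypotheses via Propositions~\ref{prop:complete}(2) and~\ref{prop:preserves}(2). For part~1, your argument is actually more careful than the paper's. The paper's one-line proof only cites Proposition~\ref{prop:free}, but $\mathsf{Mod}_R$ is not of the form $\operatorname{Mon}(\mathsf C)$, so that proposition does not by itself manufacture a left adjoint to the forgetful functor $\mathsf{Mod}_R\to\operatorname{Set}$. Your two-step factorization --- the free commutative monoid functor $\Phi\colon\operatorname{Set}\to\mathsf{CMon}$ followed by extension of scalars $R\otimes-\colon\mathsf{CMon}=\mathsf{Mod}_{\N}\to\mathsf{Mod}_R$ along the initial semiring morphism $\N\to R$ --- is the right way to supply the missing step, and your identification of $R\otimes\Phi(D)$ with finitely supported maps $D\to R$ correctly recovers the concrete description of the free $R$-semimodule given after Definition~\ref{def:semimod}. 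In short: for part~2 you reproduce the paper's argument; for part~1 you make explicit an argument that the paper's terse citation to Proposition~\ref{prop:free} leaves implicit (and, as stated, does not really cover).
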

\begin{proof}
  Using Proposition \ref{prop:complete} and Proposition \ref{prop:preserves}
  we can apply Proposition \ref{prop:free}.
\end{proof}

$F(D)$ corresponds to the free $\S$-semimodule over a set $D$ indicated in Section \ref{sec:tss}. It is \textbf{free} in the following sense. For every map $\phi$ from $D$ into an $\mathbb{S}$-semimodule $M$ there exists a unique $\mathbb{S}$-semimodule morphism $\Phi\colon \mathbb{F} \to M$ such that the following diagram commutes.
\begin{center}
\begin{tikzcd}
D \arrow[rd, "\phi"] \arrow[r, hook] & \mathbb{F} \arrow[d, "\Phi", dotted] \\
                                     & M                          
\end{tikzcd}
\end{center}
\subsubsection{Symmetric powers}
Let \(\mathsf C\) be a symmetric monoidal category and \(R\) a commutative monoid object.
Recall that the category \(\mathsf{Mod}_R\) is itself a symmetric monoidal category with \(R\) as the unit.
Given an object \(M\) in \(\mathsf{Mod}_R\), the symmetric group \(\mathbb S_n\) acts naturally on \(M^{\otimes_Rn}\).
In fact, the action is induced by the braiding map corresponding to \(\otimes_R\).

Suppose that \(\mathsf C\) has finite colimits. The symmetric power \(S^nM\) is then defined to be the coequalizer of the action of \(\mathbb S_n\) on \(M^{\otimes_Rn}\).
By definition, this means that \(S^nM\) has the following universal property: every \emph{symmetric morphism}\footnote{That is, an arrow invariant to the action of the symmetric group.} \(f\colon M^{\otimes_Rn}\to N\) extends uniquely to a morphism \(\tilde f\colon S^nM\to N\).
In particular, there are canonical epimorphisms \(S^nM\otimes S^mM\twoheadrightarrow S^{n+m}M\). 
If \(\mathsf C\) is furthermore cocomplete, the previous construction endows the coproduct
\[ S(M)\coloneqq\bigoplus_{n=0}^\infty S^nM \]
with a commutative algebra structure. This object is the free commutative monoid object over \(M\).
The required universality property follows by gluing together the universality of each symmetric power.

In the particular case where the underlying category is \(\mathsf{CMon}\), for a semimodule \(M\), \(S(M)\) is the free commutative semialgebra over \(M\).

\section{Quasi-shuffle via surjections}
\label{sec:quasiShuffle}

We remark that one can express the inductively defined quasi-shuffle product \eqref{Squasishuff} explicitly via certain surjections   \cite{EFM2017,EFMPW15}. Let $k,k_1,k_2$ be positive integers such that $\max(k_1,k_2) \le k \le k_1+k_2$.
We define a $(k_1,k_2)$-quasi-shuffle of type $k$ a surjection
$$
	f : \{1,\ldots, k_1+k_2 \} \twoheadrightarrow \{1,\ldots, k\},
$$
such that  $f(1) < \cdots < f({k_1})$ and $f({k_1+1}) < \cdots < f({k_1+k_2})$. Note that for $k=k_1+k_2$ one recovers the usual $(k_1,k_2)$-shuffle bijections. The set of $(k_1,k_2)$-quasi-shuffles of type $k$ is denoted by $\mathrm{qSh(k_1,k_2;k)}$. The latter permit to express the quasi-shuffle product \eqref{Squasishuff} of two words in closed form
 \begin{align}
 \label{surjections}
\begin{split}  
 & \left(a_{j_1} \cdots  a_{j_{k_1}}  \right)
	\qs 
   \left(a_{j_{k_1+1}} \cdots a_{j_{k_1+k_2}}	\right) 
   =
   \sum_{\max(k_1,k_2) \leq k \leq k_1+k_2} \;
   \sum_{f \in \mathrm{qSh(k_1,k_2;k)}} \;
         a^f_{i_1} \cdots a^f_{i_k},
  \end{split}
 \end{align} 
with $a^f_{i_l} \coloneqq {\prod_{m \in f^{-1}(\{l\})}} a_m$. Note that for $f \in \mathrm{qSh(k_1,k_2;k)}$ the set $ f^{-1}(\{l\})$ contains either one or two elements. In the case of a trivial bracket operation on $A$, the right-hand side of \eqref{surjections} reduces to the well-known formula expressing shuffle products of words in terms of shuffle permutations. Concretely, returning to Lemma \ref{lem:qsIdentity}, we see that \eqref{surjections} implies for two words over the alphabet \eqref{theAlphabet} and $z \in \mathbb{S}_{\zero_\s}^{d,\N_{\ge 1}}$
 \begin{align*}
   &\left\langle \ISS_{s,t}^\S(z), \left(a_{j_1} \cdots  a_{j_{k_1}}  \right)
	\qs 
   \left(a_{j_{k_1+1}} \cdots a_{j_{k_1+k_2}}	\right)  \right\rangle\\
    &=
    \sum_{\max(k_1,k_2) \leq k \leq k_1+k_2} \;
   \sum_{f \in \mathrm{qSh(k_1,k_2;k)}} \;
         \left\langle \ISS_{s,t}^\S(z), a^f_{i_1} \cdots a^f_{i_k}\right\rangle\\
& =
    \sum_{\max(k_1,k_2) \leq k \leq k_1+k_2} \;
   \sum_{f \in \mathrm{qSh(k_1,k_2;k)}} \;
   \sideset{}{_\s}\bigoplus_{s < j_1 < j_2 < \cdots < j_k\le t} z_{j_1}^{\odot_\s a^f_{i_1}} \odot_\s \dots \odot_\s z_{j_k}^{\odot_\s a^f_{i_k}}
  \end{align*}
where $a^f_{i_l} \coloneqq [{\prod_{m \in f^{-1}(\{l\})}} a_m]$.

\section*{Acknowledgments}
The first author thanks Bernd Sturmfels (MPI Leipzig) for introducing him to the tropical semiring.
The second author was supported by the Research Council of Norway through project 302831 ``Computational Dynamics and Stochastics on Manifolds'' (CODYSMA).
The third author was supported the BMS MATH+ Excellence Cluster EF1.
We thank Darij Grinberg (Drexel University) for very helpful comments on a first version of this manuscript.

\bibliographystyle{arxiv2}
\bibliography{tropical-iss.bib}
\end{document}